\newcommand{\rt}{\rightarrow}
\newcommand{\lrt}{\longrightarrow}
\newcommand{\st}{\stackrel}
\newcommand{\al}{\alpha}
\newcommand{\la}{\lambda}
\newcommand{\La}{\Lambda}
\newcommand{\lan}{\langle}
\newcommand{\ran}{\rangle}
\newcommand{\C}{\mathbb{C} }
\newcommand{\D}{\mathbb{D} }
\newcommand{\K}{\mathbb{K} }
\newcommand{\N}{\mathbb{N} }
\newcommand{\Z}{\mathbb{Z} }
\newcommand{\CA}{\mathcal{A} }
\newcommand{\CC}{\mathcal{C} }
\newcommand{\CCD}{\mathcal{D} }
\newcommand{\CI}{\mathcal{I} }
\newcommand{\CCL}{\mathcal{L} }
\newcommand{\CM}{\mathcal{M} }
\newcommand{\CQ}{\mathcal{Q} }
\newcommand{\CS}{\mathcal{S} }
\newcommand{\CT}{\mathcal{T} }
\newcommand{\CX}{\mathcal{X} }
\newcommand{\CY}{\mathcal{Y} }
\newcommand{\CW}{\mathcal{W}}
\newcommand{\CV}{\mathcal{V}}
\newcommand{\CU}{\mathcal{U}}
\newcommand{\BZ}{\mathbf{Z}}
\newcommand{\FB}{\mathfrak{B}}
\newcommand{\FC}{\mathfrak{C}}
\newcommand{\FT}{\mathfrak{T}}
\newcommand{\BT}{\mathbf{T}}
\newcommand{\BQ}{\mathbf{Q}}
\newcommand{\BE}{\mathbb{E}}
\newcommand{\X}{\mathbf{X}}
\newcommand{\Y}{\mathbf{Y}}
\newcommand{\W}{\mathbf{W}}
\newcommand{\PP}{\mathbf{P}}
\newcommand{\T}{{\bf{T}}}
\newcommand{\SI}{{\lan {I_*} \ran}}
\newcommand{\Mod}{{\rm{Mod\mbox{-}}}}
\newcommand{\gr}{{\rm gr}\mbox{-}}
\newcommand{\FMod}{{\rm{Mod}}}
\newcommand{\mmod}{{\rm{{mod\mbox{-}}}}}
\newcommand{\Inj}{{\rm{Inj}\mbox{-}}}
\newcommand{\Prj}{{\rm{Prj}\mbox{-}}}
\newcommand{\prj}{{\rm{prj}\mbox{-}}}
\newcommand{\GPrj}{{\GP}\mbox{-}}
\newcommand{\Gprj}{{\Gp\mbox{-}}}
\newcommand{\GInj}{{\GI \mbox{-}}}
\newcommand{\Ginj}{{\Gi \mbox{-}}}
\newcommand{\im}{{\rm{Im}}}
\newcommand{\op}{{\rm{op}}}
\newcommand{\inc}{{\rm{inc}}}
\newcommand{\can}{{\rm{can}}}
\newcommand{\sg}{{\rm{sg}}}
\newcommand{\add}{{\rm{add}\mbox{-}}}
\newcommand{\Add}{{\rm{Add}\mbox{-}}}
\newcommand{\ac}{{\rm{ac}}}
\newcommand{\tac}{{\rm{tac}}}
\newcommand{\rc}{{\rm c}}
\newcommand{\bb} {{\rm{b}}}
\newcommand{\GP}{{\mathcal{GP}}}
\newcommand{\Gp}{{\mathcal{G}p}}
\newcommand{\GI}{{\mathcal{GI}}}
\newcommand{\Gi}{{\mathcal{G}i}}
\newcommand{\HT}{{\rm{H}}}
\newcommand{\Coker}{{\rm{Coker}}}
\newcommand{\Ker}{{\rm{Ker}}}
\newcommand{\Rep}{{\rm Rep}}
\newcommand{\Hom}{{\rm{Hom}}}
\newcommand{\Ext}{{\rm{Ext}}}
\newcommand{\End}{{\rm{End}}}
\newtheorem{theorem}{Theorem}[section]
\newtheorem{corollary}[theorem]{Corollary}
\newtheorem{lemma}[theorem]{Lemma}
\newtheorem{proposition}[theorem]{Proposition}
\theoremstyle{definition}
\newtheorem{con}[theorem]{Construction}
\newtheorem{remark}[theorem]{Remark}
\newtheorem{setup}[theorem]{Setup}
\theoremstyle{plain}
\newtheorem{stheorem}{Theorem}[subsection]
\newtheorem{scorollary}[stheorem]{Corollary}
\newtheorem{slemma}[stheorem]{Lemma}
\newtheorem{sproposition}[stheorem]{Proposition}
\theoremstyle{definition}
\newtheorem{sdefinition}[stheorem]{Definition}
\numberwithin{equation}{section}
\begin{document}

\title[Derived equivalences of functor categories]{Derived equivalences of functor categories }

\author[Asadollahi, Hafezi, Vahed]{Javad Asadollahi, Rasool Hafezi and Razieh Vahed}

\address{Department of Mathematics, University of Isfahan, P.O.Box: 81746-73441, Isfahan, Iran and School of Mathematics, Institute for Research in Fundamental Science (IPM), P.O.Box: 19395-5746, Tehran, Iran }
\email{asadollahi@ipm.ir}

\address{School of Mathematics, Institute for Research in Fundamental Science (IPM), P.O.Box: 19395-5746, Tehran, Iran }
\email{hafezi@ipm.ir}
\email{vahed@ipm.ir}

\subjclass[2010]{18E30, 16E35, 16E65, 16P10, 16G10}

\keywords{Functor category; derived category; derived equivalence; recollement.}

\thanks{This research was in part supported by a grant from IPM (No. 93130216)}

\begin{abstract}
Let $\Mod \CS$ denote the category of $\CS$-modules, where $\CS$ is a small category. In the first part of this paper, we provide a version of Rickard's theorem on derived equivalence of rings for $\Mod \CS$. This will have several interesting applications. In the second part, we apply our techniques to get some interesting recollements of derived categories in different levels. We specialize our results to path rings as well as graded rings.
\end{abstract}

\maketitle

\tableofcontents

\section{Introduction}
Tilting theory is initiated from representation theory of finite dimensional algebras, with origins in the work of Bern\v{s}te\u{i}n,  Gel'fand and  Ponomarev \cite{BGP}. It is known that tilting theory can be viewed as a generalization of classical Morita theory; see e.g. \cite{Hap, CPS, Ric, Kel}.

In this direction, one of the most beautiful results is the Rickard's theorem \cite[Theorem 6.4]{Ric} that characterizes all rings that are derived equivalent to a given ring $A$ by determining all tilting complexes over $A$.

On the other hand, functor categories were introduced in representation theory by Auslander \cite{Aus71, Aus74}. He used this kind of categories to classify artin algebras of finite representation type \cite{Aus71} as well as to prove the first Brauer-Thrall conjecture \cite{Aus78}.

Let $\CS$ be a small category. We denote by $\Mod \CS$ the category of all additive contravariant functors from $\CS$ to $\CA b$, the category of abelian groups. $\Mod \CS$, known also as functor category, will be called the category of modules on $\CS$. It is known that it is an abelian category having enough projective object. Our first attempt in this paper is to get a generalization of Rickard's theorem for $\Mod \CS$. To this end, we fix a set of objects $\CT$ of $\K(\Mod \CS)$ satisfying the following properties:
\begin{itemize}
\item[$({\rm P}1)$] $\Hom_{\K(\Mod \CS)}(T , \oplus X_i) \cong \oplus \Hom_{\K(\Mod \CS)}(T , X_i)$, for all $ T, X_i \in \CT$;
\item[$({\rm P}2)$] $\Hom_{\K(\Mod \CS)}(T, T'[i])=0$, for all $i \neq 0$ and all $T, T' \in \CT$;
\item[$({\rm P}3)$] There exists a fixed integer $n$  such that for each $T \in \CT$, $T^i =0$, for $|i|>n$.
\end{itemize}

Theorem \ref{relativeq}, then shows that there is an equivalence of triangulated categories
$$\D(\Mod \CT) \simeq \D_{\CT}(\Mod \CS),$$
where $\D_{\CT}(\Mod \CS)$ is the relative derived category of $\CS$ with respect to $\CT$, defined in \ref{reldercat}. This has several interesting applications. Let us explain some of them in this introduction.

A small category $\CS$ is called $R$-flat, where $R$ is a commutative ring, if $\CS(x,y)$ is flat $R$-module, for every $x, y \in \CS$.
Keller \cite[9.2, Corollary]{Kel} proved that two $R$-flat categories $\CS$ and $\CS'$ are derived equivalent, i.e. $\D(\Mod \CS) \simeq \D(\Mod \CS')$, if and only if there exists a special subcategory $\CT$ of $\K^{\bb}(\prj \CS)$, called tilting subcategory for $\CS$, such that $\CT$ is equivalent to $\CS'$.

Using Theorem \ref{relativeq} we provide a sufficient condition for derived equivalence  of functor categories without flatness assumption on the categories involved. In fact, we show that if $\CT$ is as above and moreover we know that a complex $\X$ in $\K(\Mod \CS)$ is acyclic if and only if it is $\CT$-acyclic, then there exists the following equivalence of triangulated categories
\[\D(\Mod \CT)  \simeq  \D(\Mod \CS).\]
A complex $\X$ in $\K(\Mod \CS)$ is called $\CT$-acyclic, if for every object $T \in \CT$ the induced complex $\Hom_{\CT}(T, \X)$ is acyclic.

Neeman \cite{N08} proved that if $A$ is a left coherent ring, then $\K(\Prj A)$, the homotopy category of projective (right) $A$-modules, is compactly generated and is an infinite completion of $\D^{\bb}(\mmod A^{\op})^{\op}$. This in turn implies that if $A$ and $B$ are two right and left coherent rings such that $\K(\Prj A) \simeq \K(\Prj B)$, then $A$ and $B$ are derived equivalent, i.e. $\D^{\bb}(\Mod A) \simeq \D^{\bb}(\Mod B)$. As another application of our results, we prove the converse of this fact, without any assumption on $A$ and $B$, see Theorem \ref{Thmder} below. As a corollary of this equivalence, we get that if $A$ and $B$ are derived equivalent virtually Gorenstein algebras, then $\underline{\Gprj}A \simeq \underline{\Gprj}B$, where $\underline{\Gprj}A$ is the stable category of finitely generated Gorenstein projective $A$-modules modulo projectives, see preliminaries section for details. This result should be compared with \cite[Theorems 3.8 and 4.2]{Ka} and \cite[Theorem 8.11]{Be}.

The next application we shall explain here is based on Theorem \ref{GorversDer}, that shows that if $A$ and $B$ are two noetherian rings that are ${\rm fGd}$-derived equivalent, then there is an equivalence $\D(\Mod \Gprj A) \simeq \D (\Mod \Gprj B)$ of triangulated categories. For definition of an ${\rm fGd}$-derived equivalence see Definition \ref{fGd}. Such equivalences have been studied by Kato \cite{Ka}.

Recently, infinitely generated tilting modules over arbitrary rings has received considerable attention. In this connection, the notion of good tilting modules is introduced. We apply our results to show in Theorem \ref{inftilt} that every good $n$-tilting module $T_A$ provide an equivalence between the derived category $\D(\Mod A)$ and the relative derived category $\D_{T}( \Mod \End_A(T)^{\op})$. Again we refer the reader to Subsection 3.2 for definition of good $n$-tilting modules and their properties.

Recollements of triangulated categories are `exact sequences' of triangulated categories, which describe the middle term by a triangulated subcategory and a triangulated quotient category. Recollements were introduced by Beilinson, Bernstein and Deligne \cite{BBD} in a geometric context, in order to decompose derived categories of sheaves into two parts, an open and a closed one, and thus providing a natural habitat for Grothendieck's six functors.

A necessary and sufficient condition for the existence of recollements of (bounded) derived module categories of rings has been given by Koenig \cite{Kon}. This result was extended to differential graded rings and unbounded derived categories \cite{J3, NS}. All these criterions characterize the existence of a recollement by determining two exceptional objects.

If the quotient or the subcategory, in an arbitrary recollement, vanishes, then one exceptional object will be vanish  and the other one is a tilting complex, that is, one recovers Morita theory of derived categories.

Section 4 is devoted to investigate the existence of $\D^-(\Mod )$, resp. $\D(\Mod)$, level recollements of functor categories. Let $\CS$ be a small category. We prove that if $\FB$ and $\FC$ are sufficiently nice full triangulated small subcategories  of $\K^{\bb}(\Prj \CS)$, then $\D(\Mod \CS)$ admits a recollement
\vspace{0.2cm}
\[\xymatrix@C=0.5cm{\D(\Mod \FB) \ar[rrr]  &&& \D(\Mod \CS) \ar[rrr] \ar@/^1.5pc/[lll] \ar@/_1.5pc/[lll] &&& \D(\Mod \FC).\ar@/^1.5pc/[lll] \ar@/_1.5pc/[lll] }\]
\vspace{0.1cm}
\\Then it is shown that the above recollement can be restricted to a recollement
\vspace{0.3cm}
\[\xymatrix@=0.5cm{ \D^-(\Mod \FB) \ar[rrr]   &&&  \D^-(\Mod \CS) \ar[rrr]  \ar@/^1.5pc/[lll] \ar@/_1.5pc/[lll] &&&  \D^-(\Mod \FC). \ar@/^1.5pc/[lll] \ar@/_1.5pc/[lll]  }\]
\vspace{0.1cm}

Here, we plan to provide a sufficient condition for the existence of recollements of functor categories. Using this, it will be proved that if there is a $\D^-(\Mod)$ level recollement of rings, then we have a $\D^-(\Mod )$ level recollement of their path rings, incidence rings and monomial rings over any locally finite quiver. This result can be considered as an extension of Asashiba's result \cite{As} that states if $A$ and $B$ are algebras that are derived equivalent, then their path algebras, incidence algebras and monomial algebras are derived equivalent.

It is known that the category of complexes over ring $A$ is equivalent to the category of graded modules over a graded ring $A[x]/(x^2)$, $\gr A[x]/(x^2)$, see \cite{GH}. A similar argument implies that $\Rep (A_{-\infty}^{+\infty}, A)$  is equivalent to $\gr A[x]$. Using these facts, in the last subsection of the paper we specialize our result to get $\D^-(\gr)$ level recollements of graded rings.

\section{Preliminaries}
In general $A$ denotes an associative ring with identity. We let $\Mod A$, resp. $A \mbox{-} {\rm Mod}$, denote the category of all right, resp. left, $A$-modules. We also, consider the following full subcategories of $\Mod A$.
\begin{itemize}
\item[$~$] $\Prj A$ $=$ \ projective (right) $A$-modules,
\item[$~$] $\mmod A$ $=$ \ finitely presented  $A$-modules,
\item[$~$] $ \prj A$ $=$ \ finitely generated projective $A$-modules.
\end{itemize}

\s Let $\CC$ be an additive category. We denote by $\C(\CC)$ the category of complexes in $\CC$. We grade complexes cohomologically, so every complex $\X$ in $\C(\CC)$ is of the form
\[ \cdots \lrt X^{n-1} \st{\partial^{n-1}}\lrt X^n \st{\partial^n}\lrt X^{n+1} \lrt \cdots.\]
Let $\X=(X^i , \partial^i)$ be a complex in $\C(\CC)$ and $n , m$ be integers. We define the following truncations of $\X$:
\[ {}_{\sqsubset_n} \X: \ 0 \lrt X^n \st{\partial^n} \lrt X^{n+1} \st{\partial^{n+1}} \lrt X^{n+2} \lrt \cdots \]
\[ \X {}_{{}_m\sqsupset}: \ \cdots \lrt X^{m-2} \st{\partial^{m-2}} \lrt X^{m-1} \st{\partial^{m-1}} \lrt X^m \lrt 0 \]
\[ {}_{\subset_n} \X : \ 0 \lrt \Coker \partial^{n-1} \st{\bar{\partial^n}} \lrt X^{n+1} \st{\partial^{n+1}} \lrt X^{n+2} \lrt \cdots \]
\[\X {}_{{}_m\supset} : \cdots \lrt X^{m-2} \st{\partial^{m-2}} \lrt X^{m-1} \st{\partial^{m-1}} \lrt \Ker \partial^m \lrt 0 \]
The differential $\bar{\partial^n}$ is the induced map on residue classes.
\vspace{0.2cm}

We denote the homotopy category of $\CC$ by $\K(\CC)$; the objects are complexes and morphisms are the homotopy classes of morphisms of complexes.
The full subcategory of $\K(\CC)$ consisting of all bounded above, resp. bounded, complexes is denoted by $\K^-(\CC)$, resp. $\K^{\bb}(\CC)$. Moreover, we denote by $\K^{ -, \bb}(\CC)$,  the full subcategory of $\K^-(\CA)$ formed by all complexes $\X$ such that there is an integer $n=n(\X)$ with ${\rm H}^i(\X)=0$, for all $i\leq n$.

 \vspace{0.2cm}

 Let $\CA$ be an abelian category. The derived category of $\CA$ will be denoted by $\D(\CA)$. Also, $\D^-(\CA)$, resp. $\D^{\bb}(\CA)$, denotes the full subcategory of $\D(\CA)$ formed by all homologically bounded above, resp. homologically bounded, complexes.

 \s {\sc Total acyclicity.} Let $\CX$ be an additive category. A complex $\X$ in $\C(\CX)$ is called $\CX$-totally acyclic if for every object $Y \in \CX$, the induced complexes $\Hom_{\CX}(\X, Y)$ and $\Hom_{\CX}(Y, \X)$ of abelian groups are acyclic.

Let $\CA$ be an abelian category having enough projective, resp. injective, objects and $\CX= \Prj \CA$, resp. $\CX= \Inj \CA$, be the class of projectives, resp. injectives. In this case, an $\CX$-totally acyclic complex is called totally acyclic complex of projectives, resp. totally acyclic complex of injectives. An object $G$ in $\CA$ is called Gorenstein projective, resp. Gorenstein injective, if $G$ is a syzygy of a totally acyclic complex of projectives, resp. totally acyclic complex of injectives. We denote the class of all Gorenstein projective, resp. Gorenstein injective, objects in $\CA$ by $\GPrj \CA$, resp. $\GInj \CA$. In case $\CA= \Mod A$, we abbreviate the notations to $\GPrj A$ and $\GInj A$. We set $\Gprj A= \GPrj A \cap \mmod A$ and $\Ginj A= \GInj A \cap \mmod A$.
\vspace{0.2cm}

\s {\sc Localizing and thick subcategories.}
Let $\CCD$ be a triangulated category.
A triangulated subcategory $\CCL$ of $\CCD$ is called  thick, if it is closed under direct summands. The smallest full  thick subcategory  of $\CCD$ containing a class $\CCL$ of objects is denoted by ${\rm thick}( \CCL)$. A triangulated subcategory $\CCL$ of $\CCD$ is called localizing if it is closed under all coproducts allowed in $\CCD$. If $\CCL$ is a subclass of objects of $\CCD$, we denote by ${\rm Loc} \CCL$, the smallest full localizing subcategory of $\CCD$ containing $\CCL$.

We have the following constructions.

\begin{con}\label{Cons}(see \cite[Lemma 3.3]{K04}) Given a class  $\CCL$  of objects of a triangulated category $\CCD$, we take $\overline{\CCL}$ to be the class of all $X[i]$ with $X \in \CCL$ and $i \in \Z$. Define a full subcategories $\lan \CCL\ran_n$, for $n>0$, inductively as follows.
\begin{itemize}
\item ${\lan \CCL \ran}_1$ is the  subcategory of $\mathcal{D}$ consisting of all direct summands of objects of $\overline{\CCL}$.
\item For $n>1$, suppose that $\overline{\CCL}_n$ is the class of objects $X$ occuring in a triangle
     $$Y \rt X \rt Z \rightsquigarrow$$   with
$Y \in {\lan \CCL \ran}_{i}$ and $Z\in {\lan \CCL \ran}_j$ such that $i,j <n$.
Let $\lan \CCL\ran_n$ denote  the full subcategory of $\CCD$ formed by all direct summands of objects of $\overline{\CCL}_n$.
\\It can be easily checked that ${\rm thick}( {\CCL}) =\bigcup_{n\in \N}{\lan \CCL \ran}_n$.
\end{itemize}
\end{con}

Let  $\al$ be a regular cardinal. A coproduct in $\CCD$ is said to be $\al$-coproduct, if it is indexed by a set of cardinality less than $\al$. A full thick subcategory of $\CCD$ is called $\al$-localizing if it is  closed under $\al$-coproducts. For a subcategory $\CCL$ of $\CCD$, ${\rm Loc}_{\al}\CCL$ is the smallest $\al$-localizing subcategory of $\CCD$ containing $\CCL$.
 In fact, ${\rm Loc} {\CCL}  =\bigcup_{\al}{\rm Loc}_{\al} \CCL$.
\vspace{0.2cm}

The construction of ${\rm Loc}_{\al}\CCL $ will be used throughout the paper and deserves more attention.
\begin{con}\label{Cons2}
For a class $\CCL$ of objects of $\CCD$, we let $\overline{\CCL}_{\al}$ be the class of all objects $X[i]$, with $X \in \CCL$ and $i \in \Z$, together with their $\al$-coproducts.
\begin{itemize}
  \item  ${\rm Loc}_{\al}^1\CCL$ is the full subcategory of $\CCD$ whose objects are direct summand of objects of $\overline{\CCL}_{\al}$.
  \item For $n>0$, $\overline{\CCL}^n_{\al}$ denotes the class of  all objects ${ X}$ such that there is a triangle $$Y \rt X \rt Z \rightsquigarrow$$
       with
$Y \in {\rm Loc}_{\al}^i\CCL$ and $Z\in {\rm Loc}_{\al}^j\CCL$ such that $i,j <n$.
We define ${\rm Loc}_{\al}^n\CCL$ to be the full subcategory of $\CCD$ formed by all direct summands of $\al$-coproducts of objects of $\overline{\CCL}_{\al}^n$.
\end{itemize}
 Now, it can be easily checked that ${\rm Loc}_{\al}\CCL= \bigcup_{n \in \N} {\rm Loc}_{\al}^n\CCL$.
\end{con}

\s \label{Recoll}{\sc Recollements and stable $t$-structures.} Let $\CT$, $\CT'$ and $\CT''$ be triangulated categories. $\CT$ is called a recollement of $\CT'$ and $\CT''$ if there exists a diagram consisting of six triangulated functors as follows
\[ \xymatrix@C-0.5pc@R-0.5pc{ \CT' \ar[rr]^{i_*=i_!}  && \CT \ar[rr]^{j^*=j^!} \ar@/^1pc/[ll]_{i^!} \ar@/_1pc/[ll]_{i^*} && \CT'' \ar@/^1pc/[ll]_{j_*} \ar@/_1pc/[ll]_{j_!} }\]
satisfying the following conditions:
\begin{itemize}
\item[$(i)$] $(i^*,i_*)$, $(i_!,i^!)$, $(j_!, j^!)$ and $(j^*,j_*)$ are adjoint pairs.
\item[$(ii)$] $i^!j_*=0$, and hence $j^!i_!=0$ and $i^*j_!=0$.
\item[$(iii)$] $i_*$, $j_*$ and $j_!$ are full embeddings.
\item[$(iv)$] for any object $T \in \CT$, there exist the following triangles
$$ i_!i^!(T) \rt  T \rt j_*j^*(T) \rightsquigarrow \ \ \ \text{and} \ \ \ j_!j^!(T) \rt T \rt i_*i^*(T) \rightsquigarrow$$
in $\CT$.
\end{itemize}
\vspace{0.2cm}

Let $\CT_1$ and $\CT_2$ be triangulated categories that admit the following recollements
\[\xymatrix@C=0.5cm{\CT'_1\ar[rrr]^{i_{1*}}  &&& \CT_1 \ar[rrr]^{j^{*}_1} \ar@/^1.5pc/[lll]_{i^{!}_2} \ar@/_1.5pc/[lll]_{i^{*}_1} &&& \CT''_1, \ar@/^1.5pc/[lll]_{j_{1*}} \ar@/_1.5pc/[lll]_{j_{1!}} }\]
\[\xymatrix@C=0.5cm{\CT'_2\ar[rrr]^{i_{2*}}  &&& \CT_2 \ar[rrr]^{j^{*}_2} \ar@/^1.5pc/[lll]_{i^{!}_2} \ar@/_1.5pc/[lll]_{i^{*}_2} &&& \CT''_2. \ar@/^1.5pc/[lll]_{j_{2*}} \ar@/_1.5pc/[lll]_{j_{2!}} }\]
A triangle functor $F: \CT_1 \lrt \CT_2$ is called a morphism of recollements, if there are triangle functors $F': \CT'_1 \lrt \CT'_2$ and $F'': \CT''_1 \lrt \CT''_2$ make the following diagrams commutative, up to natural isomorphism
\[\tiny \xymatrix@C=0.3cm@R=0.4cm{\CT'_1 \ar@{<-_{)}}[dd]^{F'}  &&& \CT_1  \ar@{<-_{)}}[dd]^{F} \ar@/_1.5pc/[lll]_{i^{*}_1} &&& \CT''_1 \ar@{<-_{)}}[dd]^{F''} \ar@/_1.5pc/[lll]_{j_{1!}}
\\ \\  \CT'_2  &&& \CT_2   \ar@/_1.5pc/[lll]_{i^{*}_2} &&& \CT''_2  \ar@/_1.5pc/[lll]_{j_{2!}} }
\ \ \xymatrix@C=0.3cm@R=0.4cm{\CT'_1\ar[rrr]^{i_{1*}} \ar@{<-_{)}}[dd]^{F'}  &&& \CT_1 \ar[rrr]^{j^{*}_1} \ar@{<-_{)}}[dd]^{F}   &&& \CT''_1  \ar@{<-_{)}}[dd]^{F''}
\\ \\  \CT'_2\ar[rrr]^{i_{2*}}  &&& \CT_2 \ar[rrr]^{j^{*}_2} &&& \CT''_2  }
\ \ \xymatrix@C=0.3cm@R=0.4cm{\CT'_1 \ar@{<-_{)}}[dd]^{F'}  &&& \CT_1  \ar@{<-_{)}}[dd]^{F} \ar@/^1.5pc/[lll]_{i^{!}_1}  &&& \CT''_1 \ar@/^1.5pc/[lll]_{j_{1*}} \ar@{<-_{)}}[dd]^{F''}
\\ \\  \CT'_2  &&& \CT_2  \ar@/^1.5pc/[lll]_{i^{!}_2}  &&& \CT''_2. \ar@/^1.5pc/[lll]_{j_{2*}} }
\]

Let $\CCD$ be a triangulated category. A pair $(\CU, \CV)$ of full subcategories of $\CT$ is called an stable $t$-structure, if the following conditions are satisfied
\begin{itemize}
\item[$(i)$] $\mathcal{U}=\Sigma \mathcal{U}$ and $\mathcal{V}= \Sigma \mathcal{V}$,
\item[$(ii)$] $\Hom_{\CT}(\mathcal{U}, \mathcal{V})=0$,
\item[$(iii)$] For every $X \in \CT$, there is a triangle $U \rt X \rt V \rightsquigarrow$ with $U \in \mathcal{U}$ and $V \in \mathcal{V}$.
\end{itemize}

Following proposition provides a connection between the above two notions.
\begin{proposition} \label{Miyachi} \cite{Mi}
Let $(\CU,\CV)$ and $(\CV,\CW)$ be stable $t$-structures in $\CT$. Then there is a recollement
\[\xymatrix@C-0.5pc@R-0.5pc{ \CV  \ar[rr]^{i_*}  && \CT \ar[rr]^{j^*} \ar@/^1pc/[ll]_{i^!} \ar@/_1pc/[ll]_{i^*} && \CT/\CV \ar@/^1pc/[ll]_{j_*} \ar@/_1pc/[ll]_{j_!} }\]
in which  $i_*: \CV \lrt \CT$ is a canonical embedding, $\im j_!=\CU$ and $\im j_* =\CW$.
\end{proposition}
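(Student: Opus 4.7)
The plan is to build the six recollement functors directly from the two stable $t$-structures, with $i_*=i_!$ being the canonical inclusion $\CV \hookrightarrow \CT$ and $j^*=j^!$ the Verdier localization $\CT \to \CT/\CV$. First, I would extract from the stable $t$-structure $(\CU,\CV)$ a functorial triangle $U_X \to X \to V_X \rightsquigarrow$ with $U_X \in \CU$ and $V_X \in \CV$; functoriality and uniqueness of this decomposition follow from $\Hom_{\CT}(\CU,\CV)=0$ by the standard argument. The assignment $X \mapsto V_X$ then defines a functor $i^{*}\colon \CT \to \CV$, and applying $\Hom_{\CT}(-,V')$ for $V' \in \CV$ to the triangle shows $(i^{*},i_{*})$ is an adjoint pair. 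Symmetrically, the stable $t$-structure $(\CV,\CW)$ produces a functorial triangle $V^{X} \to X \to W^{X} \rightsquigarrow$, giving a right adjoint $i^{!}\colon X \mapsto V^{X}$ to $i_{*}$.

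Next, I would identify $\CT/\CV$ with both $\CU$ and $\CW$. The composite $\CU \hookrightarrow \CT \xrightarrow{\,Q\,} \CT/\CV$ is essentially surjective by the $(\CU,\CV)$-triangle (which shows $QX \cong QU_X$ for every $X$) and is fully faithful: a left roof $U \xleftarrow{s} Y \to U'$ with $\cone(s) \in \CV$ and $U,U' \in \CU$ can, using $\Hom_{\CT}(\CU,\CV)=0$, be replaced by an honest morphism $U \to U'$, and any morphism $U \to U'$ becoming zero in $\CT/\CV$ factors through an object of $\CV$, hence is zero. The same argument, using $\Hom_{\CT}(\CV,\CW)=0$, shows $\CW \hookrightarrow \CT \to \CT/\CV$ is an equivalence. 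I then define $j_{!}$ (resp.\ $j_{*}$) as the composite of the quasi-inverse $\CT/\CV \xrightarrow{\sim} \CU$ (resp.\ $\CT/\CV \xrightarrow{\sim} \CW$) with the inclusion, so that $\im j_{!}=\CU$ and $\im j_{*}=\CW$, as required.

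The adjunctions $(j_{!},j^{*})$ and $(j^{*},j_{*})$ are checked using the two decomposition triangles: for $U \in \CU$ and $X \in \CT$, applying $\Hom_{\CT}(U,-)$ to $U_X \to X \to V_X \rightsquigarrow$ gives $\Hom_{\CT}(U,X) \cong \Hom_{\CT}(U,U_X) \cong \Hom_{\CT/\CV}(QU,QX)$, and symmetrically for $\CW$. Axiom (ii) follows at once from these identifications: $i^{!}j_{*}=0$ because $j_{*}$ has image in $\CW$ and the $(\CV,\CW)$-decomposition of any $W \in \CW$ is the trivial triangle $0 \to W \to W$, so $i^{!}W=0$; the vanishings $j^{!}i_{!}=0$ and $i^{*}j_{!}=0$ are dual. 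Axiom (iii) is immediate since $\CV,\CU,\CW \hookrightarrow \CT$ are full subcategories, and axiom (iv) is exactly the two decomposition triangles $V^{X} \to X \to W^{X} \rightsquigarrow$ and $U_X \to X \to V_X \rightsquigarrow$ after translating through the identifications $i_{!}i^{!}X=V^{X}$, $j_{*}j^{*}X=W^{X}$, $j_{!}j^{!}X=U_X$, and $i_{*}i^{*}X=V_X$.

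The main obstacle is the quotient identification step: verifying that $\CU \hookrightarrow \CT \to \CT/\CV$ (and analogously for $\CW$) is an equivalence requires an honest roof-calculus argument rather than just a Hom-vanishing, since one must turn formal fractions in $\CT/\CV$ into genuine morphisms between objects of $\CU$. Once this identification is in place, the remaining verifications are formal consequences of the Hom-orthogonalities $\Hom_{\CT}(\CU,\CV)=0$ and $\Hom_{\CT}(\CV,\CW)=0$ built into the definition of a stable $t$-structure.
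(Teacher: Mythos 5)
Your argument is correct and is essentially the standard construction from Miyachi's paper, which this proposition cites without reproducing a proof: inclusion and Verdier quotient give the middle arrows, the two decomposition triangles give the four outer adjoints via the orthogonality relations, and the identification $\CU \simeq \CT/\CV \simeq \CW$ is done by showing that the relevant denominators split (in a triangle $Y \xrightarrow{s} U \to C$ with $U\in\CU$, $C=\cone(s)\in\CV$, the map $U\to C$ is zero by $\Hom_\CT(\CU,\CV)=0$, so $s$ is a split epimorphism and the roof reduces to an honest morphism — no octahedron needed). The only point worth making explicit is that the definition of stable $t$-structure in the paper asks only that $\CU,\CV$ be $\Sigma$-stable full subcategories, so one should first observe that $\CV$ is automatically a thick triangulated subcategory (closed under cones, by decomposing the cone and using both orthogonality and idempotent-splitting of the zero morphism) before forming the Verdier quotient $\CT/\CV$.
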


\section{Derived equivalences of functor categories}
In this section we provide a  sufficient condition for derived equivalences of functor categories. Then we present a version of Rickard's Theorem in terms of functor categories. \\

 Let $\CS$ be a skeletally small category. We denote by $\Mod \CS$ the category of contravariant functors from $\CS$ to the category of abelian groups. $\Mod \CS$ is called the category of modules over $\CS$, or the category of (right) $\CC$-modules. It is known that $\Mod\CS$ is an abelian category with arbitrary coproducts.

For an object $S \in \CS$, one may apply Yoneda lemma to show that the representable functor $\Hom_{\CS}(-,S)$ is a projective object in $\Mod\CS$. Moreover, for a functor $F \in \Mod \CS$, there is an epimorphism $\coprod_i \Hom_{\CS}(-, S_i) \lrt F \lrt 0$, where $S_i$ runs through isomorphism classes of objects in $\CS$. So the abelian category $\Mod\CS$ has enough projective objects.  The full subcategory of $\Mod \CS$ consisting of all projective objects will be denoted by $\Prj \CS$.

An object $F$ of $\Mod\CS$ is called finitely presented  if there exists an epimorphism $$\Hom_{\CS}(-, S_1) \lrt \Hom_{\CS}(-, S_0) \lrt F \lrt0,$$ for some objects $S_1$ and $S_0$ of $\CS$. The category of all finitely presented $\CS$-modules is denoted by $\mmod\CS$. Note that a finitely presented $\CS$-module $F$ is projective if and only if it is isomorphic to a direct summand of a finite direct sum of representable functors. We denote by $\prj \CS$ the full subcategory of $\mmod \CS$ consisting of all finitely presented projective functors.

 Let $\CS$ be a small category which is closed under finite direct sums and every idempotent splits. Then, in this case, every finitely presented projective $\CS$-module is of the form $\Hom_{\CS}(-, S)$, for some $S \in \CS$.\\

\s \label{Properties} Let $\CS$ be a small category. Let $\CT$ be a  set of objects in   $\K(\Mod \CS)$ satisfying the following properties:
\begin{itemize}
\item[$({\rm P}1)$] $\Hom_{\K(\Mod \CS)}(T , \oplus X_i) \cong \oplus \Hom_{\K(\Mod \CS)}(T , X_i)$, for all $ T, X_i \in \CT$;
\item[$({\rm P}2)$] $\Hom_{\K(\Mod \CS)}(T, T'[i])=0$, for all $i \neq 0$ and all $T, T' \in \CT$;
\item[$({\rm P}3)$] There exists a fixed integer $n$  such that for each $T \in \CT$, $T^i =0$, for $|i|>n$.
\end{itemize}

Let $\K\K(\Mod \CS):= \K(\K(\Mod \CS))$ denote the homotopy category of $\K(\Mod \CS)$. Thus every object of $\K\K(\Mod \CS)$ is a complex of complexes over $\Mod \CS$ together with a sequence of homotopy classes of maps between them such that composing every two consecutive maps is null homotopic.  By \cite{Ric}, for every object $\X$ in $\K\K(\Mod \CS)$, we have:
\begin{itemize}
\item[$(a)$] a bigraded object $\X^{**}$ of $\Mod \CS$;
\item[$(b)$] a graded endomorphism $d$ of degree $(1,0)$, obtained from the differentials in the complexes over $\Mod \CS$;
\item[$(c)$] a graded endomorphism $\delta$ of degree $(0,1)$, obtained from the original complex.
\end{itemize}

\begin{setup}
Throughout the paper, unless otherwise specified, $\CS$ denotes a small category  and $\CT$ is a set of objects in $\K(\Mod \CS)$ satisfying  the properties defined at \ref{Properties}.
Also, Let $\FT$ denote  a class of objects of $\K\K(\Mod \CS)$ consisting of all shiftings of stalk complexes $\BT$ with $T$ in degree zero, where $T \in \CT$.
\end{setup}
Note that the same property as $({\rm P}1)$ holds true for every complex $\BT$ in $\FT$, i.e.
$$ \Hom_{\K\K(\Mod \CS)}(\BT, \oplus_i \BT_i) \cong \oplus_i \Hom_{\K\K(\Mod \CS)}(\BT,\BT_i),$$
where $\BT_i \in \FT$.

Let $\K\K_{\FT}(\Mod \CS)$ be the smallest full triangulated subcategory of $\K\K(\Mod \CS)$ that contains $\FT$ and is closed under coproducts.

\begin{lemma}\label{Xj}
Let $\X$ be a complex in $\K\K_{\FT}(\Mod \CS)$. Then for every $j$, $(\X^{*j},d)$ is a direct sum of objects of $\FT$.
\end{lemma}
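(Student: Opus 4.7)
The plan is to verify that the full subcategory $\CB$ of $\K\K(\Mod \CS)$ consisting of those complexes $\X$ for which each column $(\X^{*j},d)$ lies in $\Add\CT$ inside $\K(\Mod \CS)$ contains $\FT$ and is a triangulated subcategory closed under coproducts. Minimality of $\K\K_{\FT}(\Mod \CS)$ then forces $\K\K_{\FT}(\Mod \CS)\subseteq\CB$, which is precisely the statement of the lemma (a coproduct of shifts of stalk complexes $\BT$ in $\K\K(\Mod \CS)$ contributes exactly a coproduct of copies of $T$'s from $\CT$ to each column).

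First I would dispatch the routine closure properties. If $\BT\in\FT$ is a shift of a stalk, then $\BT^{*j}$ equals some $T\in\CT$ for one value of $j$ and vanishes otherwise, so $\BT\in\CB$. Closure of $\CB$ under the translation functor amounts to the reindexing $(\X[1])^{*j}=\X^{*(j+1)}$, up to the standard sign on $d$. Closure under arbitrary coproducts is immediate from $\bigl(\bigoplus_\lambda \X_\lambda\bigr)^{*j}=\bigoplus_\lambda \X_\lambda^{*j}$, together with the fact that $\Add\CT$ is closed under coproducts in $\K(\Mod \CS)$.

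The main step, and the only delicate one, is closure under mapping cones. For a morphism $f\colon \X\to \Y$ in $\K\K(\Mod \CS)$, the cone $C(f)$ has graded components $C(f)^{j}=\X^{j+1}\oplus \Y^{j}$ in $\K(\Mod \CS)$. The outer differential $\delta_{C(f)}$ carries $f$ as its off-diagonal entry, but the inner differential $d$ of each component $C(f)^{j}$, viewed as a complex in $\Mod \CS$, is simply the componentwise direct sum of the inner differentials on $\X^{j+1}$ and $\Y^{j}$; no contribution from $f$ appears because $f$ is, columnwise, a chain map with respect to $d$. Thus
\[
(C(f)^{*j},d)\;\cong\;(\X^{*(j+1)},d)\oplus(\Y^{*j},d)
\]
as objects of $\K(\Mod \CS)$, which lies in $\Add\CT$ whenever both summands do.

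Finally, since $\K\K_{\FT}(\Mod \CS)$ is a triangulated subcategory closed under all coproducts, the usual B\"okstedt--Neeman Eilenberg-swindle argument shows that it is automatically thick; and $\Add\CT$ is closed under direct summands by construction, so $\CB$ inherits this closure as well. I expect the cone step to be the principal obstacle: it requires a clean bookkeeping of the two differentials $d$ and $\delta$ to recognize that $f$ enters only into $\delta$ and leaves the $d$-structure of each column undisturbed. Once this point is clarified, the inductive verification along Construction \ref{Cons2} is routine.
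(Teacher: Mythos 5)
Your proof takes essentially the same route as the paper's: both verify that the class of complexes with columns in $\Add\CT$ is stable under coproducts, shifts, and cone formation, and hence contains the localizing subcategory generated by $\FT$. The paper packages this as an explicit induction along the filtration $\bigcup_n{\rm Loc}^n_\al\FT$ of Construction \ref{Cons2} (passing over the cone step with ``by the above triangle''), whereas you appeal to minimality directly and spell out the cone computation $(C(f)^{*j},d)\cong(\X^{*(j+1)},d)\oplus(\Y^{*j},d)$, correctly noting that $f$ enters only the outer differential $\delta$ and leaves the $d$-structure of each column untouched.
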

\begin{proof}
By definition, $\K\K_{\FT}(\Mod \CS)= {\rm Loc} \FT = \bigcup_{\al} {\rm Loc}_{\al}\FT$. So, if $\X \in \K\K_{\FT}(\Mod \CS)$, then $\X \in {\rm Loc}_{\al}\FT$ for some regular cardinal $\al$. By Construction \ref{Cons2}, ${\rm Loc}_{\al}\FT = \bigcup_{n \in \N}{\rm Loc}^n_{\al}\FT$ and so we prove the statement by induction on $n$. It is clearly true for $n=1$. Assume that $n>1$. By construction, there is a triangle
$$\Y \rt \X \rt \BZ \rightsquigarrow,$$
where $\Y \in {\rm Loc}^l_{\al}\FT$ and $\BZ \in  {\rm Loc}^m_{\al}\FT$, where $m,l<n$.
Induction hypothesis implies that for every $j$, $(\Y^{*j},d)$ and $(\BZ^{*j}, d)$ are direct sums of objects of $\FT$. So, by the above triangle, $(\X^{*j},d)$ is a direct sum of objects of $\FT$, for every $j \in \Z$.
\end{proof}

\s \label{G(S)} In the following, our aim is to construct a functor from $\K\K_{\FT}(\Mod \CS)$ to $\K(\Mod \CS)$. A complex $\X=(X^{**}, d, \partial) \in \K\K_{\FT}(\Mod \CS)$ is of the following form:
\begin{itemize}
\item[$(i)$] $\X^{ij}$ is zero for large $i$ or large $j$;
\item[$(ii)$] $d^2=0$;
\item[$(iii)$] for every $j$, $(\X^{*j},d)$, considered as a single complexes, is a direct sum of  objects of $\CT$;
\item[$(iv)$] $d\delta =\delta d$;
\item[$(v)$] for every $j$, $\delta^2: \X^{*j} \lrt \X^{*(j+2)}$, considered as a chain map of complexes, is homotopic to zero.
\end{itemize}

Hence, as it is mentioned in \cite[p. 439]{Ric}, if we want to follow the natural way, i.e. consider $\X$ as a double complex over $\Mod \CS$ and then form its total complex, a technical problem will arise. In fact, we shall required $\partial^2$ to be zero, but our assumption only implies that $\partial^2$ is homotopic to zero.  In order to fix the problem, the differentials $d_0=d, d_1=(-1)^{i+j}\delta$ will be extended to a sequence $\{d_i: i=0,1, \cdots\}$ of maps of degree $(1-i,i)$ such that for every $n$,
$$d_0d_n + d_1d_{n-1} + \cdots + d_nd_0=0.$$
Now, one can deduce that the total complex of $\X^{**}$ with $\Sigma d_i$, as differentials, is a complex over $\Mod \CS$.

\begin{lemma}\label{lem1}
Let $\X^{**}$ and $\Y^{**}$ be graded objects of $\Mod \CS$ that are equipped with a graded endomorphism $d$ of degree $(1,0)$ with $d^2=0$. Suppose also that for every $j$, $(\X^{*j},d)$ and $(\Y^{*j},d)$, regarded as single complexes are isomorphic to direct sums of objects of $\FT$. Then for every graded map $\al: \X^{**}\lrt \Y^{**}$ of degree $(p,q)$, $p \neq 0$ and $d\al =\al d$, there is a graded map $h$ of degree $(p-1,q)$ such that $\al=dh+hd$.
\end{lemma}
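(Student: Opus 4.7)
The plan is to reduce the assembly of a global graded homotopy to a slice-by-slice construction, exploiting that the hypothesis $d\alpha = \alpha d$ means each horizontal slice of $\alpha$ is a chain map, and then use properties $({\rm P}1)$ and $({\rm P}2)$ to force null-homotopy of each such slice.

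First I would unwind what the hypothesis says slice-wise. Fix $j \in \Z$. Because $\alpha$ has bidegree $(p,q)$, it restricts to a family of maps $\alpha^{ij}\colon \X^{ij} \to \Y^{i+p, j+q}$. The relation $d\alpha = \alpha d$ says exactly that, for each fixed $j$, the collection $\{\alpha^{ij}\}_{i\in\Z}$ is a chain map of degree $p$ from $(\X^{*j},d)$ to $(\Y^{*,j+q},d)$, equivalently a degree-zero chain map $(\X^{*j},d) \to (\Y^{*,j+q},d)[p]$ of complexes in $\Mod\CS$.

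Next I would show this chain map is null-homotopic by a Hom computation in $\K(\Mod\CS)$. By hypothesis, $(\X^{*j},d) \cong \bigoplus_{s} T_s$ and $(\Y^{*,j+q},d) \cong \bigoplus_{t} T'_t$ with all $T_s, T'_t \in \CT$. Since $\Hom$ out of a coproduct is a product, and using $({\rm P}1)$ for Hom into a coproduct, one has
\[
\Hom_{\K(\Mod\CS)}\!\left(\bigoplus_s T_s,\;\bigoplus_t T'_t[p]\right) \;\cong\; \prod_s \bigoplus_t \Hom_{\K(\Mod\CS)}(T_s, T'_t[p]),
\]
and by $({\rm P}2)$ every factor vanishes when $p \neq 0$. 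Thus the class of $\alpha^{*j}$ in $\K(\Mod\CS)$ is zero, so there exists a graded map $h^{*j}\colon \X^{*j} \to \Y^{*,j+q}$ of inner degree $p-1$ satisfying
\[
\alpha^{*j} \;=\; d\,h^{*j} + h^{*j}\, d.
\]

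Finally I would assemble these slicewise homotopies into a single graded map $h\colon \X^{**} \to \Y^{**}$ of bidegree $(p-1,q)$ by taking, for each $j$, the component $h^{*j}$ chosen above (invoking the axiom of choice over $j$). No compatibility between different $j$'s is required, because the differential $d$ preserves the second index, so the identity $\alpha = dh + hd$ decouples along the $j$-coordinate and follows from the slicewise identities. The only mildly nontrivial step is the Hom computation, and the main thing to be careful about is the direction of the distribution laws (product over the first coordinate, sum over the second, via $({\rm P}1)$), after which $({\rm P}2)$ delivers the vanishing for $p \neq 0$.
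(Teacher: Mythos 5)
Your argument is correct and follows exactly the slice-by-$j$ argument of Rickard's Lemma~2.3, which the paper simply cites and says to adapt: restrict $\alpha$ to a single second-index slice to get a degree-$p$ chain map between two coproducts of objects of $\CT$, kill it by the Hom computation, and assemble the slicewise homotopies (which decouple because $d$ preserves the second index). The one thing worth being explicit about is that in the step $\Hom_{\K(\Mod\CS)}(T_s, \oplus_t T'_t[p]) \cong \oplus_t \Hom_{\K(\Mod\CS)}(T_s, T'_t[p])$ the summands $T'_t[p]$ are shifts and not literally in $\CT$, so one is tacitly using the shift-stable form of $({\rm P}1)$ rather than its literal statement --- but this is precisely the same gloss the paper makes in citing Rickard (whose $T$ is compact, so the issue does not arise there), so it is not a defect of your proof relative to the paper's.
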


\begin{proof}
This can be obtained by  straightforward  modification of the argument of the proof of Lemma 2.3 of \cite{Ric}.
\end{proof}

\s\label{3.4} Let $G(\CS)$ denote the category that is defined as follows. An  object is a  system $\{ \X^{**}, d_i \mid i=0, 1, \cdots\}$, where $\X^{**}$ is a bigraded object of $\Mod \CS$ satisfying the conditions as an object in $\K\K_{\FT}(\Mod \CS)$ and, for every $i$,  $d_i$ is a graded endomorphism of $\X^{**}$ of degree $(1-i,i)$ such that for every $n$,
$$d_0d_n + d_1d_{n-1} + \cdots + d_nd_0=0.$$
The morphisms between two systems $\{\X^{**},d_i\}$ and $\{\Y^{**},d_i\}$ are collections $\{\al_i \mid i=0,1, \cdots \}$ of graded maps $\X^{**} \lrt \Y^{**}$ of degrees $(-i,i)$ satisfying
$$\al_0 d_n + \al_1d_{n-1}+ \cdots \al_nd_0=d_0\al_n+ d_1\al_{n-1} + \cdots +d_n\al_0,$$
for each $n$.
\vspace{0.2cm}

\begin{proposition}\label{functor}
There is a functor
$$\Phi: \K \K_{\FT}(\Mod \CS) \lrt \K(\Mod \CS)$$
of triangulated categories that preserves coproducts.
\end{proposition}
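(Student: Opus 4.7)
The plan is to factor $\Phi$ through the category $G(\CS)$ of \ref{3.4} via
\[
\K\K_{\FT}(\Mod \CS) \ \overset{\Psi}{\lrt} \ G(\CS) \ \overset{\mathrm{Tot}}{\lrt} \ \K(\Mod \CS),
\]
where $\mathrm{Tot}$ sends a system $\{\X^{**}, d_i\}$ to the total complex with differential $\sum_{i \geq 0} d_i$. The sum defining the differential in each fixed total degree is finite by property (P3) together with condition (i) of \ref{3.4}, and the Maurer--Cartan relations force it to square to zero; hence $\mathrm{Tot}$ is a well-defined triangulated functor and the substantive task is the construction of $\Psi$.

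Given $\X \in \K\K_{\FT}(\Mod \CS)$, I would set $d_0 = d$ and $d_1 = (-1)^{i+j}\delta$ on $\X^{ij}$, so that $d_0^2 = 0$ and $d_0 d_1 + d_1 d_0 = 0$ by conditions (ii) and (iv) of \ref{G(S)}. The higher $d_n$, $n \geq 2$, are built by induction. Set
\[
\beta_n \ = \ d_1 d_{n-1} + d_2 d_{n-2} + \cdots + d_{n-1} d_1,
\]
a graded endomorphism of $\X^{**}$ of bidegree $(2-n, n)$; a direct computation from the Maurer--Cartan relations for indices less than $n$ shows that $d_0 \beta_n = \beta_n d_0$. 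For $n \geq 3$ the first component $2-n$ is nonzero, so Lemma \ref{lem1}---applicable because Lemma \ref{Xj} exhibits each $(\X^{*j}, d)$ as a direct sum of objects of $\FT$---yields a map $d_n$ of bidegree $(1-n, n)$ with $d_0 d_n + d_n d_0 = -\beta_n$, i.e.\ the Maurer--Cartan equation $\sum_{i=0}^{n} d_i d_{n-i} = 0$. The base case $n=2$ is handled directly by property (v) of \ref{G(S)}: the prescribed null-homotopy of $\delta^2$ furnishes a graded map of bidegree $(-1, 2)$ whose sign-twist serves as $d_2$.

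Morphisms are handled in parallel. Given $f: \X \to \Y$ in $\K\K_{\FT}(\Mod \CS)$, set $\al_0 = f$ and inductively build $\al_n$ of bidegree $(-n, n)$ satisfying the morphism relations of $G(\CS)$, at each step applying Lemma \ref{lem1} to the obstruction assembled from the $\al_i$ with $i<n$. Totalizing yields a chain map in $\C(\Mod \CS)$. Preservation of triangles is then inherited from $\mathrm{Tot}$ via mapping cones, and preservation of coproducts is immediate because $\mathrm{Tot}$ commutes with coproducts within the fixed horizontal strip of property (P3).

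The principal obstacle is functoriality, i.e.\ showing that the choices made in invoking Lemma \ref{lem1} do not affect the outcome in $\K(\Mod \CS)$: different choices of the $d_n$ must yield isomorphic total complexes, different choices of the $\al_n$ must yield chain-homotopic total maps, and a null-homotopic $f$ must totalize to a null-homotopic chain map. Each of these assertions is proved by the same inductive pattern: interpret the discrepancy (or the given null-homotopy of $f$) as the zero-th term of a new system over $G(\CS)$ and extend it to a family of higher homotopies by iterated use of Lemma \ref{lem1}. This is precisely the argument carried out in Section 2 of \cite{Ric}, whose verbatim transcription to the present setting is enabled by the axioms (P1)--(P3) on $\CT$ together with Lemma \ref{lem1}.
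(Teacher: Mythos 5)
Your proposal is correct and follows the same route as the paper's proof: define $d_0 = d$, $d_1 = (-1)^{i+j}\delta$, build the higher $d_n$ (and similarly the $\alpha_n$ for morphisms) inductively via Lemma \ref{lem1}, pass to $G(\CS)$ and totalize, citing Rickard's Section 2 for triangulatedness and well-definedness. You fill in some details the paper leaves implicit---notably the explicit obstruction $\beta_n$, the verification that $d_0\beta_n = \beta_n d_0$, and the separate treatment of the base case $n=2$ via property $(v)$ where Lemma \ref{lem1} does not apply---but the underlying argument is the same adaptation of \cite[Propositions 2.6, 2.7, 2.11]{Ric}.
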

\begin{proof}
Let $\X$ be an object of $\K\K_{\FT}(\Mod \CS)$. There is already the bigraded object $X^{**}$ of $\K(\Mod \CS)$. Hence to construct the desired functor, what is required to be defined is the graded endomorphism $d_i$. We have to choose $d_0=d$ and $d_1=(-1)^{i+j}\delta : X^{i,j} \lrt X^{i,j+1}$. Then other $d_i$'s are defined inductively by applying Lemma \ref{lem1}. In a similar way, for any map $\al: \X \lrt \Y$ in $\K\K_{\FT}(\Mod \CS)$, we can assign a morphism $\{\al_i\}:X^{**} \lrt Y^{**}$ of $G(\CS)$. For more details, see \cite[Propositions 2.6 and 2.7]{Ric}.

Altogether, we can now define a triangulated functor
$$\Phi: \K\K_{\FT}(\Mod \CS) \lrt \K(\Mod \CS)$$
by going first to $G(\CS)$ and then taking total complex.
The method that is used in \cite[Proposition 2.11]{Ric} can carry over verbatim to show that $\Phi$ is in fact a triangulated functor. Also, definition of $\Phi$ implies that it preserves coproducts.
\end{proof}

In the following, we plan to prove that the functor $\Phi$ defined above  is fully faithful.

\begin{lemma}\label{3.5}
There exists an equivalence
$$\psi: \Add \FT \st{\sim}\lrt \Prj \FT$$
of categories, where $\Add \FT$ denotes the full additive subcategory of $\K\K(\Mod\CS)$ formed by all direct summands of direct sums of objects of $\FT$.
\end{lemma}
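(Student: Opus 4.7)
The plan is to take $\psi$ to be the restricted Yoneda functor: for $\X \in \Add \FT$, set $\psi(\X) := \Hom_{\K\K(\Mod \CS)}(-,\X)\big|_{\FT}$, viewed as an object of $\Mod \FT$ (here $\FT$ is regarded as a small full subcategory of $\K\K(\Mod \CS)$). The target category $\Prj \FT$ consists of direct summands of set-indexed coproducts of representable functors $h_{\BT} := \Hom_{\FT}(-,\BT)$, and the classical Yoneda lemma gives $\Hom_{\Mod \FT}(h_{\BT}, F) \cong F(\BT)$ for every $\FT$-module $F$.

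First I would check that $\psi$ lands in $\Prj \FT$. For $\BT \in \FT$ the Yoneda lemma gives $\psi(\BT) \cong h_{\BT}$. For a coproduct $\X = \coprod_i \BT_i$ of objects of $\FT$, property (P1) (extended to $\FT$ as noted after the Setup) yields
$$
\psi(\X)(\BT) \cong \coprod_i \Hom_{\K\K(\Mod \CS)}(\BT,\BT_i) \cong \Bigl(\coprod_i h_{\BT_i}\Bigr)(\BT),
$$
so $\psi(\X) \cong \coprod_i h_{\BT_i}$ is a free $\FT$-module. An arbitrary $\X \in \Add \FT$ is a direct summand of such a coproduct via an idempotent $e$, and additivity of $\psi$ makes $\psi(\X) = \im \psi(e)$ a direct summand of a free module, hence an object of $\Prj \FT$.

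Next I would establish full faithfulness. On $\FT$ it is immediate from the Yoneda lemma. For $\X = \coprod_i \BT_i$ and $\Y = \coprod_j \BT'_j$, the universal property of coproducts in the first variable combined with (P1) in the second gives
$$
\Hom_{\K\K(\Mod \CS)}(\X,\Y) \cong \prod_i \coprod_j \Hom_{\K\K(\Mod \CS)}(\BT_i,\BT'_j),
$$
and an identical computation on the module side produces the same group, with $\psi$ compatible with both identifications. Hence $\psi$ is fully faithful on the full subcategory of $\K\K(\Mod \CS)$ whose objects are coproducts of objects of $\FT$; since both $\Add \FT$ and $\Prj \FT$ are the idempotent completions of their respective ``coproduct'' subcategories, full faithfulness extends to all of $\Add \FT$.

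Essential surjectivity onto $\Prj \FT$ is then automatic: every projective $\FT$-module $P$ is a direct summand of a free module $\coprod_i h_{\BT_i} \cong \psi(\coprod_i \BT_i)$, and the corresponding idempotent lifts through the already-established fully faithful $\psi$ to an idempotent on $\coprod_i \BT_i$ whose image lies in $\Add \FT$ and is sent by $\psi$ to $P$. The only mildly delicate step is the idempotent bookkeeping needed to pass from the ``free'' case to the Karoubi completion; since $\Add \FT$ is Karoubian by its very definition, this poses no genuine obstacle, and the substance of the lemma is the classical Yoneda story coupled with (P1).
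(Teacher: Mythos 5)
Your proposal is correct and follows essentially the same route as the paper's proof: both use the restricted Yoneda functor, exploit property (P1) to identify Hom-groups between coproducts of objects of $\FT$ with the corresponding Hom-groups between coproducts of representables in $\Mod\FT$, and then extend from the ``coproduct'' subcategories to $\Add\FT$ and $\Prj\FT$ by passing to direct summands. Your write-up is slightly more explicit about the idempotent bookkeeping, but the substance is identical.
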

\begin{proof}
In view of Yoneda lemma, there  is a bijection between $\FT$ and the class of representable functors $\Hom_{\K\K(\Mod \CS)}(-, \BT)$, where $\BT \in \FT$. Each object $\BT$ of $\FT$ satisfies the same property as $({\rm P}1)$. We use this fact to show that this bijection can be extended to the equivalence $\psi: \Add \FT \lrt \Prj \FT$. Indeed, for two families $\{\BT_i\}_{i \in I}$ and $\{\BT_j\}_{j \in J}$ of objects of $\FT$, there exist the following isomorphisms
\\$\begin{array}{llllll}
\Hom_{\K\K(\Mod\FT)}(\oplus_{i \in I}\BT_i, \oplus_{j\in J}\BT_j) & \cong \prod_{i \in I}\Hom_{\K\K(\Mod\CS)}(\BT_i, \oplus_{j\in J}\BT_j)\\
& \cong \prod_{i \in I} \oplus_{j \in J} \Hom_{\K\K(\Mod \CS)}(\BT_i , \BT_j)\\
& \cong \prod_{i \in I} \oplus_{j \in J} \Hom_{\Mod \FT}(\Hom(-, \BT_i) , \Hom(-, \BT_j))\\
& \cong \prod_{i \in I} \Hom_{\Mod \FT}(\Hom(-, \BT_i) , \oplus_{j \in J} \Hom (-, \BT_j))\\
& \cong \Hom_{\Mod \FT}( \oplus_{i \in I} \Hom(-, \BT_i) , \oplus_{j \in J} \Hom (-, \BT_j))\\
& \cong \Hom_{\Mod \FT}(  \Hom(-, \oplus_{i \in I}\BT_i) ,  \Hom(-, \oplus_{j \in J} \BT_j)).
\end{array}$
\\ Now, since every object of $\Add \FT$, resp. $\Prj \FT$, is a direct summand of some objects of the form $\oplus \BT_i$, resp. $\oplus_i \Hom(-, \BT_i)$, we get that $\psi$ is fully faithful. Moreover, it  follows directly from the definition of $\psi$ that $\psi: \Add \FT \lrt \Prj \FT$ is dense.
\end{proof}

Let $\CCD$ be a triangulated category with coproducts. Recall that An object $X$ of $\CCD$ is called compact if for every set $\{ Y_j\}_{j \in J}$ of objects of $\CCD$, every map $X \rt \coprod_{j \in J} Y_j$ factors through a finite coproduct. The full triangulated subcategory of $\CCD$ consisting of all compact objects is denoted by $\CCD^{\rc}$.

Let $\CCL$ be a set of objects of $\CCD$. Then $\CCL$ generates $\CCD$ provided an object $X$ of $\CCD$ is zero if for all $L \in \CCL$, $\CCD(L,X)=0$. A triangulated category $\CCD$ is called compactly generated, if there is a set of compact objects generating  $\CCD$.

\begin{proposition}\label{3.5(2)}
The triangulated category $\K\K_{\FT}(\Mod \CS)$ is compactly generated with $\FT$ as a compact generating set.
\end{proposition}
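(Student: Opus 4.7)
There are two things to verify: (i) that every $\BT \in \FT$ is compact in $\K\K_\FT(\Mod\CS)$, and (ii) that $\FT$ generates, i.e.\ $Y=0$ whenever $\Hom_{\K\K(\Mod\CS)}(\BT[i],Y)=0$ for all $\BT \in \FT$ and all $i \in \Z$. The plan is to dispose of (ii) first by a formal argument using the identity $\K\K_\FT(\Mod\CS) = {\rm Loc}\,\FT$, and then to establish (i) by unwinding the description of morphisms out of a stalk complex and invoking Lemma~\ref{Xj} together with property~$({\rm P}1)$.

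For (ii), suppose $Y$ satisfies the vanishing hypothesis and put $\CN = \{Z \in \K\K_\FT(\Mod\CS) : \Hom(Z, Y[i]) = 0 \text{ for all } i \in \Z\}$. The long exact sequence from a triangle makes $\CN$ triangulated, the isomorphism $\Hom(\coprod Z_j, Y[i]) \cong \prod \Hom(Z_j, Y[i])$ makes it closed under coproducts, and it is plainly closed under shifts and direct summands. Since $\FT \subseteq \CN$ by hypothesis, minimality of ${\rm Loc}\,\FT$ forces $\CN = \K\K_\FT(\Mod\CS)$. Taking $Z = Y$ gives $\mathrm{id}_Y = 0$, hence $Y = 0$.

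For (i), fix $\BT \in \FT$ and assume without loss of generality that $\BT$ is the stalk complex of some $T \in \CT$ in outer degree $0$; the case of a general shift then follows because shifts of compact objects are compact. For a family $\{\X_k\}_{k \in K}$ in $\K\K_\FT(\Mod\CS)$, coproducts in $\K\K(\Mod\CS)$ are computed bidegreewise, so $(\coprod_k \X_k)^{*,j} = \coprod_k \X_k^{*,j}$ in $\K(\Mod\CS)$ for every $j$. By Lemma~\ref{Xj}, each $\X_k^{*,j}$ is a direct sum of objects of $\CT$, hence so is $\coprod_k \X_k^{*,j}$, and property~$({\rm P}1)$ yields
$$\Hom_{\K(\Mod\CS)}\!\left(T,\, \coprod_k \X_k^{*,j}\right) \;\cong\; \bigoplus_k \Hom_{\K(\Mod\CS)}(T, \X_k^{*,j}).$$
Now a morphism $\BT \to \X$ in $\K\K(\Mod\CS)$ is the class in $\K(\Mod\CS)$ of a map $T \to \X^{*,0}$ whose composition with the outer differential $\delta^0$ vanishes, taken modulo the image of composition with $\delta^{-1}$. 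Since $\Hom_{\K(\Mod\CS)}(T, -)$ commutes with coproducts in outer degrees $-1, 0, 1$, and kernels and cokernels of abelian-group maps commute with direct sums, this descends to $\Hom_{\K\K(\Mod\CS)}(\BT, \coprod_k \X_k) \cong \bigoplus_k \Hom_{\K\K(\Mod\CS)}(\BT, \X_k)$, giving compactness.

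The step requiring the most care is the descent to the outer homotopy quotient in~(i): one is juggling inner homotopy in $\K(\Mod\CS)$ and outer homotopy in $\K(\K(\Mod\CS))$ simultaneously. This is routine because coproducts in $\K(\Mod\CS)$ are termwise and preserve both chain maps and chain homotopies, so the isomorphism at the inner level is compatible with the cycle/coboundary conditions imposed in the outer direction, but it does call for careful bookkeeping.
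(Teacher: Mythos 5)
Your proof is correct, and it takes a genuinely different route from the paper's. The paper's proof is a transport-of-structure argument: it invokes Lemma~\ref{3.5} to get the additive equivalence $\psi\colon \Add\FT \to \Prj\FT$, lets this induce a triangulated equivalence $\bar\psi\colon {\rm Loc}\,\FT \to {\rm Loc}(-,\FT) \simeq \D(\Mod\FT)$, and then simply pulls back the standard fact that $\D(\Mod\FT)$ is compactly generated by the representables. You instead verify the two halves of compact generation directly inside $\K\K(\Mod\CS)$: generation by the usual localizing-subcategory argument, and compactness of each $\BT \in \FT$ by unwinding $\Hom_{\K\K(\Mod\CS)}(\BT, \X)$ as $H^0$ of the complex $\Hom_{\K(\Mod\CS)}(T, \X^{*,\bullet})$ and combining Lemma~\ref{Xj} with $({\rm P}1)$ to push the coproduct through. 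One small point worth noting: $({\rm P}1)$ is stated only for coproducts with summands in $\CT$, so the appeal to Lemma~\ref{Xj} to identify each $\X_k^{*,j}$ as a coproduct of objects of $\CT$ (and hence $\coprod_k \X_k^{*,j}$ as one too, by regrouping a coproduct of coproducts) is not a convenience but is exactly what makes $({\rm P}1)$ applicable --- your proof handles this correctly. Your route is longer and more computational, but it is self-contained and does not lean on the somewhat delicate step of lifting the additive equivalence $\psi$ to an equivalence of localizing subcategories; the paper's route is shorter but outsources that lifting, which the reader must supply.
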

\begin{proof}
Let $(-, \FT)$ denote the class of all representable functor $\Hom_{\K\K(\Mod \CS)}(-, \BT)$, where $\BT$ runs through objects of $\FT$. Then the equivalence $\psi: \Add \FT \lrt \Prj \FT$ induces an equivalence
$$\bar{\psi}: {\rm Loc} \FT \st{\sim}\lrt {\rm Loc}(-, \FT).$$
It is known that ${\rm Loc}(-, \FT) \simeq \D(\Mod \FT)$ and hence $\K\K_{\FT}(\Mod \CS)={\rm Loc} (-, \FT)$ is compactly generated with $\FT$ as a compact generating set.
\end{proof}

\begin{remark}\label{3.5(3)}
Assume that $\K_{\CT}(\Mod \CS)$ is the smallest full localizing subcategory of $\K(\Mod \CS)$ containing $\CT$. The same argument as above works to show that $\K_{\CT}(\Mod \CS) \simeq \D(\Mod \CT)$ and so $\K_{\CT}(\Mod \CS)$ is compactly generated with $\CT$ as a compact generating set.
\end{remark}
\vspace{0.2cm}

Let $\K_{\CT}^{-, \CT \bb}(\Mod \CS)$ denote the full triangulated subcategory of $\K_{\CT}^-(\Mod \CS)$ consisting of all complexes $X$, in which there is an integer $n=n(X)$ such that $\Hom_{\K_{\CT}(\Mod \CS)}(T , X[i])=0$  for all $i<n$ and  every $T\in \CT$. Also, we denote by $\K\K_{\FT}^-(\Mod \CS)$ the full subcategory of $\K\K_{\FT}(\Mod \CS)$ formed by all bounded above complexes. Moreover, $\K\K_{\FT}^{-, \FT\bb}(\Mod \CS)$ denotes the full triangulated subcategory of $\K\K_{\FT}^-(\Mod \CS)$ formed by all complexes $\X$  in which there is an integer $n=n(\X)$ such that $ \Hom_{\K\K_{\FT}(\Mod \CS)}(\BT, \X[n]) =0$ for all $i<n$ and every $\BT \in \FT$.\\

We have the following characterizations of subcategories of $\K\K_{\FT}(\Mod \CS)$, resp. $\K_{\CT}(\Mod \CS)$.

\begin{lemma}\label{Char1}
\begin{itemize}
\item[$(i)$] Let $\X$ be a complex in $\K\K_{\FT}(\Mod \CS)$. Then $\X$ lies in $\K\K_{\FT}^-(\Mod \CS)$, up to isomorphism, if and only if there exists an integer $N>0$ such that $$\Hom_{\K\K(\Mod \CS)}(\BQ[-n], \X)=0,$$ for all $n\geq N$  and  every complex $\BT$ in $ \FT$.
\item[$(ii)$] Let $\X$ be a complex in $\K_{\CT}(\Mod \CS)$. Then $\X$ lies in $\K^-_{\CT}(\Mod \CS)$, up to isomorphism, if and only if there exists an integer $N>0$ such that $$\Hom_{\K(\Mod \CS)}(\BQ[-n], \X)=0,$$ for all $n \neq N$ and every complex $T$ in $ \CT$.
\end{itemize}
\end{lemma}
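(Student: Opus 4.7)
The proof has two directions, and I treat part $(i)$ first; part $(ii)$ follows by the same argument, using the equivalence $\K_{\CT}(\Mod \CS) \simeq \D(\Mod \CT)$ from Remark \ref{3.5(3)} in place of Proposition \ref{3.5(2)}.

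\emph{Forward direction.} Suppose $\X$ is isomorphic in $\K\K_{\FT}(\Mod \CS)$ to a complex $\X'$ with $\X'^j=0$ for all $j>M$, and set $N=M+1$. Since $\BT[-n]$ is concentrated in outer degree $n$, a chain map $\BT[-n]\lrt\X'$ in $\K\K(\Mod \CS)$ amounts to a single morphism $T\lrt\X'^n$ in $\K(\Mod \CS)$ compatible with the outer differentials. For $n\geq N$ one has $\X'^n=0$, so every such chain map is already zero, whence $\Hom_{\K\K(\Mod \CS)}(\BT[-n],\X)\cong\Hom_{\K\K(\Mod \CS)}(\BT[-n],\X')=0$.

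\emph{Backward direction.} Invoke the equivalence $\bar\psi:\K\K_{\FT}(\Mod \CS)\st{\sim}\lrt\D(\Mod \FT)$ established in the proof of Proposition \ref{3.5(2)}, which sends $\BT\in\FT$ to the representable functor $\Hom_{\K\K(\Mod \CS)}(-,\BT)$ regarded as a stalk complex in degree zero. Set $Y=\bar\psi(\X)$. Since $(-,\BT)$ is projective in $\Mod \FT$, Yoneda's lemma identifies
$$\Hom_{\D(\Mod \FT)}((-,\BT)[-n],Y) \cong \HT^n(Y)(\BT).$$
The hypothesis $\Hom_{\K\K(\Mod \CS)}(\BT[-n],\X)=0$ therefore yields $\HT^n(Y)(\BT)=0$ for all $n\geq N$ and all $\BT\in\FT$. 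Because $\{(-,\BT)\}_{\BT\in\FT}$ is a projective generating set for $\Mod \FT$, this forces $\HT^n(Y)=0$ for all $n\geq N$.

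The good truncation map $\tau^{<N}Y\lrt Y$ is then a quasi-isomorphism. Taking a projective resolution of $\tau^{<N}Y$ produces a bounded above complex $P$ of projective $\FT$-modules (direct summands of coproducts of representables) with $P^j=0$ for $j\geq N$ and $P\simeq Y$ in $\D(\Mod \FT)$. Transporting $P$ back along $\bar\psi^{-1}$ yields $\X'\in\K\K^-_{\FT}(\Mod \CS)$ with $\X'^j=0$ for $j\geq N$ and $\X'\cong\X$, as required. The main obstacle is precisely this last translation step: one must realize \emph{cohomological} boundedness in $\D(\Mod \FT)$, as detected by vanishing of the Hom groups from the compact generating set $\FT$, by an \emph{honestly} bounded above complex in $\K\K_{\FT}(\Mod \CS)$, and this is exactly what the existence of projective resolutions of bounded above complexes in $\Mod \FT$ provides.
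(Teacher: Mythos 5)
Your proof is correct, and the forward direction is handled by exactly the kind of direct degree count one would expect. The interesting part is the backward direction. The paper's own proof is a single sentence ("These statements follow directly from the facts that $\K\K_{\FT}(\Mod \CS)={\rm Loc}\,\FT$ \dots"), so it does not commit to a concrete argument; your proof is a legitimate way to make that pointer precise. You pass through the termwise equivalence $\bar\psi\colon {\rm Loc}\,\FT \st{\sim}\lrt {\rm Loc}(-,\FT)\simeq \D(\Mod \FT)$ from Proposition \ref{3.5(2)}, read off the cohomological vanishing $\HT^n(Y)=0$ for $n\geq N$ via Yoneda and the fact that the representables generate $\Mod \FT$, and then realize $Y$ by a bounded above projective resolution $P$, which transports back to a genuinely bounded above object of $\K\K_{\FT}(\Mod\CS)$ because $\bar\psi$ is degree-preserving and ${\rm Loc}\,\FT$ is closed under direct summands. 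That last transport step is the one place where a reader should pause: one needs that the chosen $P$ in fact lies in ${\rm Loc}(-,\FT)$ (it does, being a bounded above complex of projectives, hence K-projective) and that the homotopy-category model of $\bar\psi^{-1}(P)$ is termwise obtained from $\psi^{-1}$; you flag this correctly as the crux. In short, your argument is sound and is essentially the natural unpacking of the paper's terse appeal to the ${\rm Loc}$-structure via the identification of $\K\K_{\FT}(\Mod\CS)$ with $\D(\Mod\FT)$, rather than a genuinely different route.
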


\begin{proof}
These statements follow directly from the facts that $\K\K_{\FT}(\Mod \CS)= {\rm Loc}\FT$ and $\K_{\CT}(\Mod \CS)= {\rm Loc}\CT$, respectively.
\end{proof}

\begin{lemma}\label{Ric1}
\begin{itemize}
\item[$(i)$] An object $\X \in \K\K_{\FT}^-(\Mod \CS)$ lies in $\K\K_{\FT}^{-, \FT \bb}(\Mod \CS)$, up to isomorphism, if and only if for each complex $\Y \in \K\K_{\FT}^-(\Mod \CS)$ there exists an integer $N$ such that $\Hom_{\K\K_{\FT}(\Mod \CS)}(\Y , \X[n])=0$, for all $n <N$.
\item[$(ii)$] An object $\X \in \K_{\CT}^-(\Mod \CS)$ lies in $\K_{\CT}^{-, \CT \bb}(\Mod \CS)$, up to isomorphism, if and only if for each complex $\Y \in \K_{\CT}^-(\Mod \CS)$ $~~~~~$  there exists$~~~~$ an integer $~~~~~~~$ $N$ such that $~~~~~~ ~~~~~\Hom_{\K_{\CT}(\Mod \CS)}(\Y , \X[n])=0$, for  all $n <N$.
\end{itemize}
\end{lemma}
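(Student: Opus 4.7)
My plan is to treat both parts in parallel, exploiting the equivalences $\bar{\psi}\colon \K\K_{\FT}(\Mod\CS)\simeq\D(\Mod\FT)$ from Proposition~\ref{3.5(2)} and $\K_{\CT}(\Mod\CS)\simeq\D(\Mod\CT)$ from Remark~\ref{3.5(3)}. Under these equivalences each stalk complex $\BT$ (resp.\ $T$) corresponds to the representable projective $\Hom(-,\BT)$ (resp.\ $\Hom(-,T)$), which is a compact generator. I write up part~(i); part~(ii) is formally identical.

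For the easy implication $(\Leftarrow)$, since $\CT$ is a set, the coproduct $\Y_0=\coprod_{T\in\CT}\BT$ exists in $\K\K_{\FT}(\Mod\CS)$ and, being a stalk complex at degree zero of the outer grading, lies in $\K\K_{\FT}^{-}(\Mod\CS)$. By hypothesis there is $N$ with $\Hom(\Y_0,\X[n])=0$ for all $n<N$, and the natural isomorphism $\Hom(\coprod_{T}\BT,\X[n])\cong\prod_{T}\Hom(\BT,\X[n])$ forces the uniform vanishing $\Hom(\BT,\X[n])=0$ for every stalk $\BT$ with $T\in\CT$ and every $n<N$. This is precisely $\X\in\K\K_{\FT}^{-,\FT\bb}(\Mod\CS)$.

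For the harder implication $(\Rightarrow)$, I transport the statement through $\bar{\psi}$. Compactness of $\{\Hom(-,\BT)\}_{T\in\CT}$ in $\D(\Mod\FT)$ means that $\Hom_{\D(\Mod\FT)}(\Hom(-,\BT),\bar{\psi}(\X)[n])$ recovers the value of the $n$-th cohomology functor of $\bar{\psi}(\X)$ at $\BT$. Hence the hypothesis $\X\in\K\K_{\FT}^{-,\FT\bb}(\Mod\CS)$ translates to $\HT^n(\bar{\psi}(\X))=0$ for $n<n_0$; combined with $\X\in\K\K_{\FT}^-(\Mod\CS)$ it places $\bar{\psi}(\X)$ in $\D^{\bb}(\Mod\FT)$. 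A smart truncation replaces $\bar{\psi}(\X)$ up to quasi-isomorphism by a bounded complex $\X'$ concentrated in some range $[M_1,M_2]$. Any $\Y\in\K\K_{\FT}^-(\Mod\CS)$ corresponds under $\bar{\psi}$ to an object of $\D^-(\Mod\FT)$ admitting a K-projective resolution $P^{\bu}\in\K^-(\Prj\FT)$ with $P^j=0$ for $j>M_3$. Since $P^{\bu}$ is K-projective, $\Hom_{\D(\Mod\FT)}(P^{\bu},\X'[n])$ is the space of homotopy classes of chain maps $P^j\to(\X')^{j+n}$; all such maps vanish once $n<M_1-M_3$. Setting $N=M_1-M_3$ finishes the argument.

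The main obstacle will be the book-keeping: verifying that $\bar{\psi}$ restricts to equivalences $\K\K_{\FT}^-(\Mod\CS)\simeq\D^-(\Mod\FT)$ and $\K\K_{\FT}^{-,\FT\bb}(\Mod\CS)\simeq\D^{\bb}(\Mod\FT)$, and that compactness of $\FT$ in $\K\K_{\FT}(\Mod\CS)$ really converts the uniform $\Hom$-vanishing hypothesis into boundedness of cohomology in $\D(\Mod\FT)$. Once these identifications are in place, the rest is the standard chain-map calculation sketched above.
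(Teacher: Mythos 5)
Your proof is correct and follows essentially the same strategy as the paper's: transport the statement through the compact-generation equivalence $\bar{\psi}\colon \K\K_{\FT}(\Mod\CS)\simeq\D(\Mod\FT)$ (resp.\ $\K_{\CT}(\Mod\CS)\simeq\D(\Mod\CT)$) from Proposition~\ref{3.5(2)} and Remark~\ref{3.5(3)}, and then prove the resulting statement on the side of the derived category of the functor category. Where the paper simply cites a small-category analogue of Rickard's Lemma~6.1 and applies $\bar{\psi}$, you instead prove that characterization directly: for $(\Leftarrow)$ via the coproduct test object $\coprod_{T\in\CT}\BT$ and the isomorphism $\Hom(\coprod\BT,\X[n])\cong\prod\Hom(\BT,\X[n])$, and for $(\Rightarrow)$ via a bounded truncation of $\bar{\psi}(\X)$ together with a bounded-above K-projective resolution of $\bar{\psi}(\Y)$. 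One small remark on the book-keeping you flag: the identification $\K\K_{\FT}^{-}(\Mod\CS)\simeq\D^{-}(\Mod\FT)$ should be derived from Lemma~\ref{Char1} together with the fact that $\Hom_{\K\K_{\FT}}(\BT,\X[n])\cong\HT^{n}(\bar{\psi}(\X))(\BT)$ (and likewise for the $\FT\bb$ part via the defining $\Hom$-vanishing condition), not from Remark~\ref{rem1}'s diagram~\eqref{diag1}, since that remark is stated as a consequence of the present lemma; done this way there is no circularity and your argument closes up cleanly.
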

\begin{proof}
 We just prove $(i)$. A similar argument works to prove  $(ii)$.
 First note that a modification of the proof of \cite[Lemma 6.1]{Ric} works to prove that, for a small category $\CA$,
 a complex $X$ in $\K^-(\Prj \CA)$ lies in $\K^{-, \bb}(\Prj \CA)$, up to isomorphism, if and only if for every complex $Y \in \K^-(\Prj \CA)$ there is a natural number $N$ such that $\Hom(Y, X[n])=0$, for all $n<N$. Now, the equivalence
 $$\bar{\psi}: {\rm Loc}\FT \lrt {\rm Loc} (-, \FT)$$
 of triangulated categories yields the result.
\end{proof}

\begin{lemma}\label{Ric2}
\begin{itemize}
\item[$(i)$]An object $\X \in \K\K^{-, \FT \bb}_{\FT}(\Mod \CS)$ lies in ${\rm thick} (\oplus \FT )$, up to isomorphism, if and only if for each complex $\Y \in
   \K\K^{-, \FT \bb}_{\FT}(\Mod \CS)$ there is an integer $N$ such that $\Hom_{\K\K_{\FT}(\Mod \CS)}(\X ,\Y[n])=0$, for  all $n >N$.

\item[$(ii)$]An object $\X \in \K^{-, \CT \bb}_{\CT}(\Mod \CS)$ lies in ${\rm thick} ( \oplus \CT ) $, up to isomorphism, if and only if for each complex $~~~~~~~ \Y ~~ \in
    ~~~~ \K^{-, \CT \bb}_{\CT}(\Mod \CS)~~~~~~~~$ there is an integer $N$ $~~~~~~~~$ such that   $\Hom_{\K_{\CT}(\Mod \CS)}(\X ,\Y[n])=0$, for all $n >N$.
\end{itemize}
\end{lemma}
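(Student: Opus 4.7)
The plan is to adapt the argument of \cite[Lemma 6.2]{Ric} to our setting via the equivalence $\bar\psi: \K\K_{\FT}(\Mod\CS) \st{\sim}\lrt \D(\Mod\FT)$ of Proposition \ref{3.5(2)}. Under $\bar\psi$, the class $\oplus\FT$ corresponds to the arbitrary direct sums of representable (projective) functors in $\Mod\FT$, ${\rm thick}(\oplus\FT)$ to the thick subcategory they generate, and $\K\K_{\FT}^{-,\FT\bb}(\Mod\CS)$ to the bounded-above complexes of $\Mod\FT$ with bounded cohomology. I describe the argument for (i); part (ii) is identical after substituting the equivalence of Remark \ref{3.5(3)}.

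\emph{Only if.} I induct on the filtration $\lan\oplus\FT\ran_n$ of Construction \ref{Cons}. At level $1$, $\X$ is a direct summand of $\oplus_i\BT_i[k_i]$ with finitely many $k_i$ and $\BT_i\in\FT$; then
\[
\Hom(\X,\Y[n])\hookrightarrow \prod_i \Hom(\BT_i, \Y[n-k_i]),
\]
which vanishes for $n$ larger than $\max_i k_i$ plus the outer bounded-above degree of $\Y$. Higher levels are handled by the long exact sequence obtained from a defining triangle.

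\emph{If.} By Lemma \ref{Xj}, assume $\X$ is a bounded-above complex whose columns $(\X^{*j},d)$ are direct sums of objects of $\FT$, with $\X^{*j}=0$ for $j>a$ and with bounded $\FT$-homology. For each integer $m$, brutal truncation in the outer direction produces a distinguished triangle
\[
\sigma^{\leq -m-1}\X \lrt \X \lrt \sigma^{\geq -m}\X \rightsquigarrow,
\]
in which $\sigma^{\geq -m}\X$ is a bounded outer complex with columns in $\oplus\FT$, hence lies in ${\rm thick}(\oplus\FT)$ by a routine finite induction on its outer length. For $m$ large enough, $\sigma^{\leq -m-1}\X$ remains in $\K\K_{\FT}^{-,\FT\bb}(\Mod\CS)$ and corresponds under $\bar\psi$ to a shift of a sufficiently high syzygy of the object representing $\X$. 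The hypothesis applied to $\Y = \sigma^{\leq -m-1}\X$ yields an integer $N$ such that $\Hom(\X, \sigma^{\leq -m-1}\X[n]) = 0$ for all $n>N$. A dimension-shift argument in $\D(\Mod\FT)$ then shows that, for $m$ enlarged beyond $N$, the self-$\Ext$ of this syzygy vanishes in all positive degrees, forcing the syzygy to be projective; equivalently, the connecting map $\sigma^{\geq -m}\X \to \sigma^{\leq -m-1}\X[1]$ in the triangle is zero, the triangle splits, and $\sigma^{\leq -m-1}\X$ becomes null in $\K\K_{\FT}(\Mod\CS)$. Therefore $\X\cong\sigma^{\geq -m}\X$ lies in ${\rm thick}(\oplus\FT)$.

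The main obstacle I anticipate is the coordination of $m$: it must simultaneously dominate the outer bounded-above degree $a$ of $\X$, the $\FT$-homological amplitude of $\X$, and the integer $N$ arising from the hypothesis applied to the $m$-dependent object $\Y=\sigma^{\leq -m-1}\X$. Since the syzygies at different truncation depths differ only by a finite shift, $N$ can be controlled uniformly in $m$, and as in Rickard's original argument it suffices to pick $m$ beyond all these finite bounds. Part (ii) follows by the same scheme through the equivalence $\K_{\CT}(\Mod\CS)\simeq\D(\Mod\CT)$ of Remark \ref{3.5(3)}.
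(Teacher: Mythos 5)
Your overall strategy coincides with the paper's: transport the statement across the equivalence $\bar\psi:\K\K_{\FT}(\Mod\CS)\st{\sim}\lrt\D(\Mod\FT)$ (and $\K_{\CT}(\Mod\CS)\st{\sim}\lrt\D(\Mod\CT)$ for part (ii)), under which the claim becomes the classical characterization of $\K^{\bb}(\Prj\CA)$ inside $\K^{-,\bb}(\Prj\CA)$, and then run Rickard's Lemma~6.2. The paper treats that last step as a verbatim transcription; you try to write it out, and in doing so two genuine problems appear.

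First, the brutal truncation triangle points the wrong way. Since the differential raises the (outer) degree, the piece retaining high degrees is a \emph{sub}complex and the piece retaining low degrees is a \emph{quotient}; the short exact sequence of complexes is
\[
0\lrt \sigma^{\geq -m}\X \lrt \X \lrt \sigma^{\leq -m-1}\X \lrt 0,
\]
so the associated triangle is $\sigma^{\geq -m}\X\to\X\to\sigma^{\leq -m-1}\X\rightsquigarrow$, not $\sigma^{\leq -m-1}\X\to\X\to\sigma^{\geq -m}\X\rightsquigarrow$ as you wrote. This matters for the tail of the argument, where you need to identify which connecting morphism has to vanish in order to detach the bounded piece, and which object must then be shown null.

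Second, and more seriously, the circularity you flag at the end is not resolved by the remark that ``the syzygies at different truncation depths differ only by a finite shift.'' They do not: $\sigma^{\leq -m-1}\X$ and $\sigma^{\leq -m'-1}\X$ for $m'>m$ are related by a triangle whose third term is a bounded complex of projectives, not by a shift, and the modules they represent are successive syzygies $M_m,M_{m+1},\dots$ of one another, which are genuinely different objects. Consequently the bound $N=N(\sigma^{\leq -m-1}\X)$ coming from the hypothesis has no a priori uniformity in $m$, and ``enlarging $m$ beyond $N$'' is not a legitimate move. The repair, which is what Rickard actually does, is to apply the hypothesis once to a \emph{fixed} complex in $\K\K_{\FT}^{-,\FT\bb}(\Mod\CS)$ --- taking $\Y=\X$ itself suffices --- to obtain a single integer $N$ independent of the truncation level, and only then choose the truncation depth $m$ large relative to $N$ and to the $\FT$-homological amplitude of $\X$. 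With a fixed $N$ in hand, the dimension-shifting computation that lets you pass from vanishing of $\Hom(\X,\X[n])$ for $n>N$ to projectivity of a sufficiently deep syzygy goes through, and the triangle then shows $\X$ is isomorphic to its bounded brutal truncation. As it stands your ``if'' direction has a gap at exactly the point you yourself identified as the main obstacle.
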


\begin{proof}
$(i)$ Let $\CA$ be a small category. One should apply an argument similar to \cite[Lemma 6.2]{Ric}  verbatim to show that
a complex $X$ in $\K^{-, \bb}(\Prj \CA)$ lies in $\K^{\bb}(\Prj \CA)$, up to isomorphism, if and only if for every complex $Y \in \K^{-, \bb}(\Prj \CA)$, $\Hom(X,Y[n])=0$, for large $n$. Now, the equivalence
$$\bar{\psi}: {\rm Loc}\FT \lrt {\rm Loc} (-, \FT)$$
implies the desired characterization and completes the proof.

The statement $(ii)$ can be obtained from the same argument as above.
\end{proof}

\begin{remark}\label{rem1}
Consider a bijection  $\phi: \CT \lrt \FT$ which takes any complex $T$ in $\CT$ to the stalk complex $\BT$ given by the complex $T$. The map $\phi$ induces an equivalence $\Mod \CT \simeq \Mod \FT$ and hence an equivalence $\D(\Mod \CT) \simeq \D(\Mod \FT)$. Consequently, we have the following equivalence of triangulated categories
$$ \K\K_{\FT}(\Mod \CS) \st{\sim} \rt {\rm Loc}(-, \FT) \st{\sim} \rt \D(\Mod \CS) \st{\sim} \rt \D(\Mod \CT).$$
According to the characterizations that are given in Lemmas \ref{Char1}, \ref{Ric1} and \ref{Ric2}, one can deduce that the above equivalence is restricted to the following equivalences
\begin{equation}\label{diag1}
\xymatrix{\K\K_{\FT}(\Mod \CS) \ar[r]^{\sim} \ar@{<-_)}[d] & \D(\Mod \CT)\ar@{<-_)}[d]
\\ \K\K_{\FT}^-(\Mod \CS) \ar[r]^{\sim}\ar@{<-_)}[d] & \D^-(\Mod \CT)\ar@{<-_)}[d]\\
\K\K_{\FT}^{-, \FT \bb}(\Mod \CS) \ar[r]^{\sim}& \D^{\bb}(\Mod \CT).}
\end{equation}
\end{remark}

\begin{proposition}\label{Phi-equi}
The triangulated functor $\Phi: \K \K_{\FT}(\Mod \CS) \lrt \K(\Mod \CS)$ is full and faithful. Moreover, it induces the equivalence  $$\Phi: \K\K_{\FT}(\Mod \CS) \lrt \K_{\CT}(\Mod \CS),$$
of triangulated categories.
\end{proposition}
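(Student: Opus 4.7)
The plan is to first check that $\Phi$ is fully faithful on the generating class $\FT$, then propagate full faithfulness to all of $\K\K_{\FT}(\Mod\CS)={\rm Loc}\,\FT$ by a Rickard-style two-variable devisage, and finally identify the essential image as $\K_{\CT}(\Mod\CS)$. The initial step is essentially immediate from the construction of $\Phi$ in \ref{3.4} and Proposition \ref{functor}: a stalk complex $\BT\in\FT$ (with unique nonzero entry $T\in\CT$ in outer degree zero) has total complex equal to the complex $T$ itself, and a chain map $\BT\to\BT'[i]$ of complexes of complexes must vanish for $i\neq 0$ since source and target sit in disjoint outer degrees, while for $i=0$ the outer homotopy relation collapses to the inner one. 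Hence
\[
\Hom_{\K\K(\Mod\CS)}(\BT,\BT'[i])\ \cong\ \Hom_{\K(\Mod\CS)}(T,T'[i]),
\]
with both sides vanishing for $i\neq 0$ (the right-hand side by property $({\rm P}2)$), so $\Phi$ is fully faithful on $\FT$ and compatible with shifts.

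For full faithfulness in general, I fix $\BT\in\FT$ and let $\CU_{\BT}$ be the full subcategory of those $\Y\in\K\K_{\FT}(\Mod\CS)$ for which $\Phi$ induces an isomorphism $\Hom_{\K\K}(\BT,\Y[i])\lrt \Hom_{\K}(T,\Phi(\Y)[i])$ for every $i\in\Z$. By the previous step $\FT\subseteq\CU_{\BT}$; the five-lemma makes $\CU_{\BT}$ a triangulated subcategory; and $\CU_{\BT}$ is closed under arbitrary coproducts, because $\BT$ is compact in $\K\K_{\FT}(\Mod\CS)$ by Proposition \ref{3.5(2)} and $T$ is compact in $\K_{\CT}(\Mod\CS)$ by Remark \ref{3.5(3)}. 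The latter applies once one knows that $\Phi$ takes values in $\K_{\CT}(\Mod\CS)$; this holds because $\Phi$ is triangulated and preserves coproducts, so $\Phi^{-1}(\K_{\CT}(\Mod\CS))$ is a localizing subcategory of $\K\K_{\FT}(\Mod\CS)$ containing $\FT$, and hence coincides with $\K\K_{\FT}(\Mod\CS)$. Therefore $\CU_{\BT}$ is a localizing subcategory containing $\FT$, so $\CU_{\BT}=\K\K_{\FT}(\Mod\CS)$. A symmetric argument in the other slot — fix $\Y$ and form the full subcategory $\CV_{\Y}$ of those $\X$ for which $\Phi$ induces an isomorphism $\Hom_{\K\K}(\X,\Y[i])\lrt \Hom_{\K}(\Phi(\X),\Phi(\Y)[i])$ for every $i$; it is triangulated by the five-lemma, closed under coproducts automatically since $\Hom$ out of a coproduct is a product on both sides, and contains $\FT$ by the previous paragraph — gives $\CV_{\Y}=\K\K_{\FT}(\Mod\CS)$. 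Hence $\Phi$ is fully faithful.

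For the essential image, I have already noted that $\Phi(\K\K_{\FT}(\Mod\CS))\subseteq\K_{\CT}(\Mod\CS)$. Conversely, the essential image is a strictly full triangulated subcategory of $\K(\Mod\CS)$ closed under coproducts and containing $\CT$ (via $\Phi(\BT)=T$), so it contains ${\rm Loc}\,\CT=\K_{\CT}(\Mod\CS)$. Combined with full faithfulness, this yields the equivalence $\Phi:\K\K_{\FT}(\Mod\CS)\stackrel{\sim}{\lrt}\K_{\CT}(\Mod\CS)$. I expect the main obstacle to be the closure of $\CU_{\BT}$ under arbitrary coproducts in the middle step — it is precisely the place where the compactness assertions of Proposition \ref{3.5(2)} and Remark \ref{3.5(3)}, which ultimately encode property $({\rm P}1)$ and its $\FT$-extension from the setup, are indispensable.
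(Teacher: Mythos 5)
Your proof is correct, and it takes a route that differs in structure from the paper's. The paper reduces to compact objects: it invokes the compact generation from Proposition \ref{3.5(2)} and Remark \ref{3.5(3)} together with Neeman's identification of the compacts with $\mathrm{thick}(\FT)$ and $\mathrm{thick}(\CT)$, cites Miyachi's criterion \cite[Proposition~6]{Mi2} to reduce the question to an equivalence on compacts, and then runs a finite devisage inside ${\rm thick}(\FT)=\bigcup_n\lan\FT\ran_n$ using Construction \ref{Cons}, with the base case handled by properties $({\rm P}1)$ and $({\rm P}2)$. You instead carry out a two-variable "Bousfield-style" devisage directly at the level of the localizing subcategory ${\rm Loc}\,\FT$: you first fix $\BT\in\FT$ and show that the subcategory $\CU_{\BT}$ of objects $\Y$ on which $\Phi$ is a Hom-isomorphism is localizing (the nontrivial closure under coproducts being exactly where compactness of $\BT$ and of $T$ enters), then symmetrize in the other variable, and finally identify the essential image as a localizing subcategory containing $\CT$. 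The two arguments rest on the same compactness input and the same base computation on $\FT$ versus $\CT$, and both are devisages; yours avoids citing Miyachi's proposition at the cost of performing the localizing-subcategory argument twice, while the paper's is shorter once that external result is available. Two points worth making more explicit if you wrote this up: the essential image of a fully faithful exact functor is triangulated because any morphism between objects of the image lifts along $\Phi$ by fullness, so cones lift as well; and the reduction $\Phi^{-1}(\K_{\CT}(\Mod\CS))=\K\K_{\FT}(\Mod\CS)$ that you use to see that $\Phi$ lands in $\K_{\CT}(\Mod\CS)$ requires thickness of $\K_{\CT}(\Mod\CS)$, which holds since it is localizing.
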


\begin{proof}
In view of Proposition \ref{3.5(2)} and Remark \ref{3.5(3)}, $\K\K_{\FT}(\Mod \CS)$ and $\K_{\CT}(\Mod \CS)$ are compactly generated with compact generating sets $\FT$ and $\CT$, respectively and by Lemma 2.2 of \cite{N92}, $\K\K_{\FT}(\Mod \CS)^{\rm c}={\rm thick}(\FT)$ and $\K_{\CT}(\Mod \CS)^{\rm c}={\rm thick}(\CT ) $. So,
by \cite[Proposition 6]{Mi2}, it suffices to show that the restriction of $\Phi$ to compact objects is an equivalence.   According to Construction \ref{Cons}, ${\rm thick}(\CT) = \bigcup_{n \in \N} \lan \CT \ran_n$. Now, we show that $\Phi|: {\rm thick}(\FT) \lrt {\rm thick}(\CT)$ is full and faithful, using induction on $n$

First assume that $n=1$. Properties $({\rm P}1)$ and $({\rm P}2)$ imply that if $\X$ and $\Y$ belong to $\lan \FT  \ran_1$, then
$$ \Hom_{\K\K_{\FT}(\Mod \CS)}(\X, \Y) \cong \Hom_{\K_{\CT}(\Mod \CS)}(\Phi(\X), \Phi(\Y)).$$
Now let $n>1$ and $\X$ and $\Y$ belong to $\lan \FT \ran_n$.  Hence there exist triangles
$$\X' \rt \X \rt \X'' \rightsquigarrow,$$
$$\Y' \rt \Y \rt \Y '' \rightsquigarrow,$$
where $\X' \in \lan \FT \ran_m, \Y' \in \lan \FT\ran_m', \X'' \in \lan \FT \ran_{l'}$ and $ \Y'' \in \lan \FT  \ran_l$ such that $m, m', l, l'<n$. So, by induction  hypothesis, we have the following isomorphism
$$ \Hom_{\K\K_{\FT}(\Mod \CS)}(\X, \Y) \cong \Hom_{\K_{\CT}(\Mod \CS)}(\Phi(\X), \Phi(\Y)).$$
Consequently, $\Phi| : \K\K_{\FT}(\Mod \CS)^{\rm c} \lrt \K_{\CT}(\Mod \CS)^{\rm c}$ is full and faithful. Clearly, $\Phi|$ is also dense and so is an equivalence.
\end{proof}

Let $\CCD$ be a triangulated category with direct sums. Let $\CCL$ be a set of objects of $\CCD$. Then $\oplus \CCL$ denotes the set of all direct sums of objects of $\CCL$.

\begin{proposition}\label{restrict}
There is a commutative diagram
\[\xymatrix{ \K\K_{\FT}(\Mod \CS) \ar[r]^{\st{\Phi} \sim} \ar@{<-_{)}}[d] & \K_{\CT}(\Mod \CS) \ar@{<-_{)}}[d] \\
\K\K^{-}_{\FT}(\Mod \CS) \ar[r]^{\sim} \ar@{<-_{)}}[d] &  \K^{-}_{\CT}(\Mod \CS) \ar@{<-_{)}}[d] \\
\K\K_{\FT}^{- ,\FT\bb}(\Mod \CS) \ar[r]^{\sim} \ar@{<-_{)}}[d] & \K^{-, \CT \bb}_{\CT} (\Mod \CS) \ar@{<-_{)}}[d] \\
 {\rm thick}(\oplus \FT) \ar@{<-_{)}}[d] \ar[r]^{\sim}  & {\rm thick}(\oplus \CT ) \ar@{<-_{)}}[d] \\
{\rm thick}(  \FT ) \ar[r]^{\sim} & {\rm thick}( \CT )
}\]
of triangulated categories  whose rows are triangle equivalences.
\end{proposition}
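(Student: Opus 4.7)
The plan is to deduce every row from the top row, which is Proposition \ref{Phi-equi}, by observing two things. First, the functor $\Phi$ is a triangulated equivalence preserving arbitrary coproducts, so it preserves all Hom-vanishing conditions between objects on the two sides. Second, by construction in \ref{3.4}--\ref{functor}, $\Phi$ sends a stalk complex $\BT \in \FT$ (with $T$ in degree zero) to the total complex of that stalk, which is simply $T$ itself; equivalently, under the bijection $\phi: \CT \lrt \FT$ of Remark \ref{rem1}, one has $\Phi \circ \phi \simeq \id_{\CT}$. This lets the intrinsic characterizations in the Lemmas above be transported across $\Phi$ one row at a time.

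For the second row, Lemma \ref{Char1}(i) characterizes $\K\K^-_{\FT}(\Mod \CS)$ as those $\X \in \K\K_{\FT}(\Mod \CS)$ admitting $N>0$ with $\Hom_{\K\K(\Mod \CS)}(\BT[-n],\X) = 0$ for every $n\geq N$ and $\BT \in \FT$. Applying $\Phi$ translates this to $\Hom_{\K(\Mod \CS)}(T[-n],\Phi(\X))=0$ for every $T\in\CT$, which by Lemma \ref{Char1}(ii) is exactly membership of $\Phi(\X)$ in $\K^-_{\CT}(\Mod \CS)$. So $\Phi$ restricts to an equivalence between the second-row entries. An identical transfer argument, now using Lemma \ref{Ric1}, yields the third-row equivalence $\K\K^{-,\FT\bb}_{\FT}(\Mod \CS) \simeq \K^{-,\CT\bb}_{\CT}(\Mod \CS)$, and the dual characterization in Lemma \ref{Ric2} gives the fourth-row equivalence ${\rm thick}(\oplus \FT) \simeq {\rm thick}(\oplus \CT)$. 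For the bottom row, the equivalence ${\rm thick}(\FT) \simeq {\rm thick}(\CT)$ was already extracted inside the proof of Proposition \ref{Phi-equi} as the restriction of $\Phi$ to the compact subcategories $\K\K_{\FT}(\Mod \CS)^{\rc}$ and $\K_{\CT}(\Mod \CS)^{\rc}$.

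The commutativity of each square is automatic, since every horizontal arrow is literally the restriction of the single functor $\Phi$ (or its quasi-inverse) to a full subcategory, and the vertical arrows are inclusions of full subcategories. Thus there is no nontrivial coherence to verify; it suffices that $\Phi$ and $\Phi^{-1}$ carry each named subcategory on one side to the named subcategory on the other, which is what the Hom-transfer argument above delivers. The only real bookkeeping concern is to keep straight which inequality (for $n$ large, $n$ small, or both) belongs to which subcategory, and which of the two parts of each Lemma is being used on which side; once the lemmas are in place, this is purely formal and presents no substantive obstacle.
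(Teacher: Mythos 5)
Your proposal matches the paper's argument in essence: the paper likewise obtains the second, third and fourth rows by transporting the intrinsic Hom-vanishing characterizations of Lemmas \ref{Char1}, \ref{Ric1} and \ref{Ric2} across the equivalence $\Phi$ of the top row, and obtains the bottom row by restricting to compact objects (which, as in Proposition \ref{Phi-equi}, are identified with ${\rm thick}(\FT)$ and ${\rm thick}(\CT)$ via Proposition \ref{3.5(2)} and Remark \ref{3.5(3)}). You also correctly note, as the paper leaves implicit, that commutativity is automatic because each horizontal arrow is a restriction of the single functor $\Phi$ and the vertical arrows are full inclusions.
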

\begin{proof}
First observe that every triangle equivalence induces an equivalence on the compact objects. Hence, in view of Proposition \ref{3.5(2)} and Remark \ref{3.5(3)}, we have the last induced equivalence.

The other induced equivalences follow directly from Lemmas \ref{Char1}, \ref{Ric1} and \ref{Ric2}.
\end{proof}

\begin{remark}
Observe that the condition $({\rm P}3)$ of \ref{Properties} has been used to define the functor $\Phi$. Indeed, let $\X$ be a complex in $\K\K_{\FT}(\Mod \CS)$. Then we can associate an object $\{\X^{**}, d_i\}$ of $G(\CS)$ to $\X$. Now,  condition $({\rm P}3)$ guarantees that the sum $\Sigma d_i$ is well-defined. So, we can form the total complex of $\X^{**}$, with $\Sigma d_i$ as the differentials. Furthermore, to construct the functor $\Phi: \K\K_{\FT}(\Mod \CS) \lrt \K(\Mod \CS)$ it is not necessary to assume that $\CT \subseteq \K(\Prj \CS)$. We have just needed  properties $({\rm P}1)$, $({\rm P}2)$ and $({\rm P}3)$ to define $\Phi$ and to prove that it is full and faithful.
\end{remark}

\s {\sc Relative derived category.}\label{reldercat}
Let $\CS$ be a small category and $\CX$ be a full subcategory of $\K(\Mod \CS)$. A complex $\Y$ in $\K(\Mod \CS)$ is called $\CX$-acyclic, if for every object $\X$ of $\CX$ and all $i \in \Z$, $\Hom_{\K(\Mod \CS)}(\X, \Y[i])=0$. Let $\K_{\CX \mbox{-} {\rm ac}}(\Mod \CS)$ denote the full triangulated subcategory of $\K(\Mod \CS)$ consisting of all $\CX$-acyclic complexes. Clearly, $\K_{\CX \mbox{-} {\rm ac}}(\Mod \CS)$ is a thick subcategory of $\K(\Mod \CS)$. So we get the triangulated category
$$\D_{\CX}(\Mod \CS):= \K(\Mod \CS)/\K_{\CX\mbox{-} {\rm ac}}(\Mod \CS)$$
which is called the relative derived  category of $\Mod \CS$ with respect to $\CX$.\\

Recall that if $\CCD'$ is a triangulated subcategory of $\CCD$, then the left and right orthogonal of $\CCD'$ in $\CCD$ are given  by
\[ ^\perp\CCD'=\{ X \in \CCD \ | \ \Hom_{\CCD}(X,Y)=0, \ {\rm{for \ all}} \ Y \in \CCD' \},\]
\[ \CCD'^\perp=\{ X \in \CCD \ | \ \Hom_{\CCD}(Y,X)=0, \ {\rm{for \ all}} \ Y \in \CCD' \}.\]

\begin{lemma}\label{lem6}
There is an equivalence
$$\D_{\CT}(\Mod \CS) \simeq \K_{\CT}(\Mod \CS)$$
of triangulated categories.
\end{lemma}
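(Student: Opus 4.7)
The plan is to exhibit the pair $(\K_{\CT}(\Mod \CS),\K_{\CT\mbox{-}{\rm ac}}(\Mod \CS))$ as a stable $t$-structure in $\K(\Mod \CS)$. Once this is established, the standard calculus of fractions shows that the composite
\[\K_{\CT}(\Mod \CS)\hookrightarrow\K(\Mod \CS)\twoheadrightarrow\D_{\CT}(\Mod \CS)\]
is a triangle equivalence, which is precisely the content of the lemma.

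Orthogonality is essentially tautological. By the definition of $\CT$-acyclic, $\Hom_{\K(\Mod \CS)}(T,Y[i])=0$ for every $T\in\CT$, every $i\in\Z$ and every $Y\in\K_{\CT\mbox{-}{\rm ac}}(\Mod \CS)$. Fixing such a $Y$, the class of objects $Z$ satisfying $\Hom_{\K(\Mod \CS)}(Z,Y[i])=0$ for all $i$ is a localizing subcategory of $\K(\Mod \CS)$ containing $\CT$, hence it contains $\K_{\CT}(\Mod \CS)={\rm Loc}\,\CT$. Thus $\Hom_{\K(\Mod \CS)}(\K_{\CT}(\Mod \CS),\K_{\CT\mbox{-}{\rm ac}}(\Mod \CS))=0$.

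The substantive step is the triangle decomposition: for each $X\in\K(\Mod \CS)$ I must produce a triangle $U\rt X\rt V\rightsquigarrow$ with $U\in\K_{\CT}(\Mod \CS)$ and $V\in\K_{\CT\mbox{-}{\rm ac}}(\Mod \CS)$. By Remark \ref{3.5(3)}, $\K_{\CT}(\Mod \CS)$ is compactly generated with $\CT$ as a compact generating set, and its coproducts coincide with those of $\K(\Mod \CS)$. Consequently the restricted functor $F_X:=\Hom_{\K(\Mod \CS)}(-,X)|_{\K_{\CT}(\Mod \CS)}$ is a cohomological, product-preserving functor on $\K_{\CT}(\Mod \CS)^{\op}$. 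Neeman's Brown representability theorem then supplies an object $U\in\K_{\CT}(\Mod \CS)$ together with a morphism $\eta\colon U\rt X$ representing $F_X$. Completing $\eta$ to a triangle $U\rt X\rt V\rightsquigarrow$ and applying $\Hom_{\K(\Mod \CS)}(T,-)$ for $T\in\CT$, the induced map $\Hom(T,U[i])\rt \Hom(T,X[i])$ is an isomorphism by the very construction of $U$; hence $\Hom(T,V[i])=0$ for every $T\in\CT$ and every $i$, so $V\in\K_{\CT\mbox{-}{\rm ac}}(\Mod \CS)$, as desired.

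With the stable $t$-structure in hand, the conclusion is a formal consequence of Verdier's calculus of fractions. Every $X$ becomes isomorphic in $\D_{\CT}(\Mod \CS)$ to its $\K_{\CT}(\Mod \CS)$-component $U$ from the triangle above, which yields essential surjectivity; any roof realizing a morphism between two objects of $\K_{\CT}(\Mod \CS)$ can be replaced by an honest morphism in $\K_{\CT}(\Mod \CS)$, because the orthogonality just established forces every $\K_{\CT\mbox{-}{\rm ac}}$-quasi-isomorphism with $\K_{\CT}(\Mod \CS)$-target to split off its acyclic summand, yielding full faithfulness. The only genuinely nontrivial input is thus the appeal to Brown representability, and this in turn rests entirely on the compact generation asserted in Remark \ref{3.5(3)} (itself a consequence of Proposition \ref{3.5(2)} and the equivalence $\K_{\CT}(\Mod \CS)\simeq\D(\Mod \CT)$).
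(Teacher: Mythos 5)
Your proof is correct and follows essentially the same route as the paper: both use the orthogonality $\K_{\CT}(\Mod \CS)\subseteq{}^\perp\K_{\CT\mbox{-}{\rm ac}}(\Mod \CS)$, and both invoke compact generation of $\K_{\CT}(\Mod \CS)$ to obtain the localization triangle (the paper cites \cite[Theorem 4.1]{Ne96} for the right adjoint to $\iota$, while you unfold that citation into its Brown-representability proof). The only cosmetic difference is that the paper derives fully-faithfulness and density separately via \cite[Lemma 9.1.5]{Ne01} and the adjoint, whereas you package the same two facts as a stable $t$-structure and then quote the standard consequence.
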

\begin{proof}
Consider the following sequence of functors
$$ \K_{\CT}(\Mod \CS) \st{\iota} \hookrightarrow \K(\Mod \CS) \st{Q} \rt \D_{\CT}(\Mod \CS),$$
where $\iota$ is the inclusion and $Q$ is the canonical functor. Since $\K_{\CT}(\Mod \CS) = {\rm Loc} \CT  $, $\K_{\CT}(\Mod \CS) \subseteq {}^\perp \K_{\CT\mbox{-}{\rm ac}}(\Mod \CS)$. Hence, by  Lemma 9.1.5 of \cite{Ne01}, the composition functor
$Q \circ \iota: \K_{\CT}(\Mod \CS) \lrt \D_{\CT}(\Mod \CS)$ is full and faithful. So to complete the proof, it remains to show that $Q\circ \iota$ is dense.

Since, by Remark \ref{3.5(3)}, $\K_{\CT}(\Mod \CS)={\rm Loc}\CT$ is compactly generated, Theorem 4.1 of \cite{Ne96} implies that the inclusion
$$\K_{\CT}(\Mod \CS) \st{\iota}\hookrightarrow \K(\Mod \CS)$$
has a right adjoint. So, for every complex $\X$ in $\K(\Mod \CS)$
there is a triangle
$$\X' \rt \X \rt \X'' \rightsquigarrow,$$
in $\K(\Mod \CS)$, where $\X' \in \K_{\CT}(\Mod \CS)$ and $\X'' \in (\K_{\CT}(\Mod \CS))^\perp$. Since $\CT$ is contained in $\K_{\CT}(\Mod \CS)$, $\X''$ is a $\CT$-acyclic complex. Therefore, if we consider the above triangle in $\D_{\CT}(\Mod \CS)$ under the canonical functor $Q$, we get  that $\X$ is isomorphic to $\X'$. This means that $Q \circ \iota$ is dense.
\end{proof}

\begin{theorem}\label{relativeq}
As above, let $\CT$ be a set of objects in $\K(\Mod \CS)$ satisfying conditions $({\rm P}1)$, $({\rm P}2)$ and $({\rm P}3)$ of \ref{Properties}. Then there is the following commutative diagram of triangulated categories with equivalence rows
\[\xymatrix{ \D(\Mod \CT) \ar[r]^{ \sim} \ar@{<-_{)}}[d] & \D_{\CT}(\Mod \CS) \ar@{<-_{)}}[d] \\
\D^{-}(\Mod \CT) \ar[r]^{\sim} \ar@{<-_{)}}[d] &  \D^{-}_{\CT}(\Mod \CS) \ar@{<-_{)}}[d] \\
\D^{\bb}(\Mod \CT) \ar[r]^{\sim} \ar@{<-_{)}}[d] & \D^{\bb}_{\CT} (\Mod \CS) \ar@{<-_{)}}[d] \\
\K^{\bb}(\Prj \CT) \ar@{<-_{)}}[d] \ar[r]^{\sim}  & {\rm thick}( \oplus \CT ) \ar@{<-_{)}}[d] \\
\K^{\bb}(\prj \CT) \ar[r]^{\sim} & {\rm thick}( \CT )
}\]
\end{theorem}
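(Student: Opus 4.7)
The plan is to stitch the diagram together from three families of equivalences already in hand: Remark \ref{rem1} matches $\K\K^?_{\FT}(\Mod \CS)$ with $\D^?(\Mod \CT)$ in the three flavours unbounded, bounded above, and bounded; Proposition \ref{restrict} matches $\K\K^?_{\FT}(\Mod \CS)$ with $\K^?_{\CT}(\Mod \CS)$ in all five flavours of the target diagram; and Lemma \ref{lem6} identifies $\K_{\CT}(\Mod \CS)$ with $\D_{\CT}(\Mod \CS)$. Concatenating the top-level maps yields
\[
\D(\Mod \CT) \st{\sim}\lrt \K\K_{\FT}(\Mod \CS) \st{\sim}\lrt \K_{\CT}(\Mod \CS) \st{\sim}\lrt \D_{\CT}(\Mod \CS),
\]
which is the top row. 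Since every horizontal arrow in the target diagram will arise by restricting this single composition to progressively smaller subcategories, commutativity of all squares will be automatic once each restriction is identified with the claimed target.

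For the second and third rows I plan to extend Lemma \ref{lem6} to bounded-above and bounded settings. Interpreting $\D^-_{\CT}(\Mod \CS)$, respectively $\D^{\bb}_{\CT}(\Mod \CS)$, as the full subcategory of $\D_{\CT}(\Mod \CS)$ cut out by the vanishing conditions $\Hom(T,\X[i])=0$ for $i\ll 0$, respectively for $|i|\gg 0$, and every $T\in\CT$, the characterizations in Lemmas \ref{Char1} and \ref{Ric1} show that the corresponding objects of $\K_{\CT}(\Mod \CS)$ lie in $\K^-_{\CT}(\Mod \CS)$ and $\K^{-,\CT\bb}_{\CT}(\Mod \CS)$, respectively. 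Fully faithfulness of the induced functor $\K^?_{\CT}(\Mod \CS)\to \D^?_{\CT}(\Mod \CS)$ is inherited from Lemma \ref{lem6} via ${\rm Loc}\,\CT \subseteq {}^{\perp}\K_{\CT\mbox{-}{\rm ac}}(\Mod \CS)$; density then follows from running the Bousfield triangle argument of Lemma \ref{lem6} and reading off, from the same characterizations, that truncation preserves the $\CT$-bounded (above) condition.

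For the bottom two rows I would pass to compact objects under the equivalence $\D(\Mod \CT)\simeq \K_{\CT}(\Mod \CS)$. By Remark \ref{3.5(3)} the latter category is compactly generated with $\CT$ as a compact generating set, hence by \cite[Lemma 2.2]{N92} its subcategory of compact objects is ${\rm thick}(\CT)$; on the $\D(\Mod \CT)$ side this matches $\K^{\bb}(\prj \CT)$, the standard description of compact objects in the derived category of a small additive category. The same reasoning applied to arbitrary coproducts of representables, combined with the last row of Proposition \ref{restrict}, identifies $\K^{\bb}(\Prj \CT)$ with ${\rm thick}(\oplus \CT)\simeq {\rm thick}(\oplus \FT)$.

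The main technical obstacle I anticipate is the bounded version of Lemma \ref{lem6}: one needs to verify that the right adjoint of the inclusion $\K_{\CT}(\Mod \CS)\hookrightarrow \K(\Mod \CS)$, produced by Brown representability as in that lemma, respects the $\CT$-boundedness conditions of Lemmas \ref{Char1}--\ref{Ric2}, so that the Bousfield truncation of a $\CT$-bounded (above) complex stays in the appropriate subcategory. Once this compatibility is in hand, all remaining commutativities in the target diagram fall out of the identifications already catalogued in Remark \ref{rem1}, Proposition \ref{restrict}, and Lemma \ref{lem6}.
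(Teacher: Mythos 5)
Your proposal follows essentially the same route as the paper's own proof: concatenate the equivalences from Remark \ref{rem1}, Proposition \ref{Phi-equi}/\ref{restrict}, and Lemma \ref{lem6} for the top row, then restrict using the characterizations in Lemmas \ref{Char1}, \ref{Ric1}, and \ref{Ric2}, with the Bousfield triangle of Lemma \ref{lem6} supplying density of the restricted localization functors. The ``main technical obstacle'' you flag --- that the truncation triangle preserves the $\CT$-boundedness conditions --- is precisely what the paper verifies, using Lemma \ref{Char1}(ii) to show the Bousfield truncation $\X'$ of a bounded-above $\X$ lands in $\K^-_{\CT}(\Mod\CS)$, and analogously via Lemmas \ref{Ric1}--\ref{Ric2} for the lower rows.
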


\begin{proof}
By Remark \ref{rem1} there is an equivalence $ \D(\Mod \CT) \simeq \K\K_{\FT}(\Mod \CS)$. Moreover, Proposition \ref{Phi-equi} implies that $\K\K_{\FT}(\Mod \CS) \simeq \K_{\CT}(\Mod \CS)$. Hence, there exists the first equivalence thanks to Lemma \ref{lem6}.
For the second equivalence, consider
the canonical functor $\K_{\CT}^-(\Mod \CS) \st{Q|} \lrt \D_{\CT}^-(\Mod \CS)$. As above, \cite[Lemma 9.1.5]{Ne01} implies that $Q|$ is full and faithful. Hence, we just need to prove that $Q|$ is dense as well.

Let $\X$ be a complex in $\K^-(\Mod \CS)$. As in the proof of Lemma \ref{lem6}, there exists a triangle
$$ \X' \rt \X \rt \X'' \rightsquigarrow,$$
where $\X' \in \K_{\CT}(\Mod \CS)$ and $\X''$ is a $\CT$-acyclic complex.

Since $\X \in \K^-(\Mod \CS)$, for each complex $T$ in ${\rm thick}(\CT)$, there is an integer $N=N(T)$ such that $\Hom_{\K(\Mod \CS)}(T[-n], \X)=0$, for all $n \geq N$. Also, $\Hom_{\K(\Mod \CS)}(T[n], \X)=0$, for all $n \in \Z$ and all $T \in {\rm thick}(\CT)$.  Thus, for every complex $T \in {\rm thick}(\CT)$ there is an integer $N=N(T)$ such that $\Hom_{\K(\Mod \CS)}(T[-n], \X')=0$,  for all $n \geq N$. Hence, by Lemma \ref{Char1}, $\X'$ belongs to $\K^-_{\CT}(\Mod \CS)$. The image of the  above triangle in $\D_{\CT}^-(\Mod \CS)$  implies that $\X \cong \X'$ in $\D_{\CT}^-(\Mod \CS)$ and so $Q|: \K^-_{\CT}(\Mod \CS) \lrt \D^-_{\CT}(\Mod \CS)$ is an equivalence. Now, Diagram \ref{diag1} in Remark \ref{rem1} and Proposition \ref{restrict} yield the desired equivalence $\D^-(\Mod \CT) \lrt \D^-_{\CT}(\Mod \CS)$ that commutes the first square.

The same argument as above together with Lemma \ref{Ric1} can be applied to show that the canonical functor $Q|: \K_{\FT}^{-, \FT \bb}(\Mod \CS) \st{\sim} \lrt \D_{\FT}^{\bb}(\Mod \CS)$ is an equivalence. Thus the second square follows from Diagram \ref{diag1} and Proposition \ref{restrict}.

Similarly, the characterization of ${\rm thick}(\oplus \CT)$, Lemma \ref{Ric2}, in conjunction with Proposition \ref{restrict} imply the equivalence $\K^{\bb}(\Prj \CT) \lrt {\rm thick}(\oplus \CT)$ making the diagram commutative.

The last equivalence follows from the fact that every equivalence between  triangulated categories can be restricted to the equivalence between their compact objects.
\end{proof}

Let $R$ be a commutative ring. A small category $\CS$ is called $R$-flat, if $\CS(x,y)$ is flat $R$-module, for every $x, y \in \CS$.
Keller \cite[9.2, Corollary]{Kel} proved that two $R$-flat categories $\CS$ and $\CS'$ are derived equivalent, i.e. $\D(\Mod \CS) \simeq \D(\Mod \CS')$, if and only if there exists a special subcategory $\CT$ of $\K^{\bb}(\prj \CS)$, called tilting subcategory for $\CS$, such that $\CT$ is equivalent to $\CS'$.

In the following theorem we provide a sufficient condition for derived equivalences of functor categories without flatness assumption on the categories involved in the derived equivalence.

\begin{theorem}\label{KelThm}
As in setup \ref{Properties}, let $\CT$ be a set of objects  of $\K(\Mod \CS)$  satisfying properties  $({\rm P}1)$, $({\rm P}2)$ and $({\rm P}3)$. Assume that a complex $\X$ in $\K(\Mod \CS)$ is acyclic if and only if it is $\CT$-acyclic. Then there exists the following commutative diagram
\[\xymatrix{ \D(\Mod \CT) \ar[r]^{ \sim} \ar@{<-_{)}}[d] & \D(\Mod \CS) \ar@{<-_{)}}[d] \\
\D^{-}(\Mod \CT) \ar[r]^{\sim} \ar@{<-_{)}}[d] &  \D^{-}(\Mod \CS) \ar@{<-_{)}}[d] \\
\D^{\bb}(\Mod \CT) \ar[r]^{\sim} \ar@{<-_{)}}[d] & \D^{\bb} (\Mod \CS) \ar@{<-_{)}}[d] \\
\K^{\bb}(\prj \CT) \ar[r]^{\sim} & \K^{\bb}(\prj \CS)
}\]
in which rows are equivalences of triangulated categories.
\end{theorem}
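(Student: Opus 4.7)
The plan is to deduce the theorem directly from Theorem \ref{relativeq} by using the hypothesis to identify the relative derived category with the ordinary one. The hypothesis says precisely that the thick subcategories $\K_{\ac}(\Mod \CS)$ and $\K_{\CT \mbox{-} \ac}(\Mod \CS)$ of $\K(\Mod \CS)$ coincide, so their Verdier quotients agree:
$$\D(\Mod \CS) = \K(\Mod \CS)/\K_{\ac}(\Mod \CS) = \K(\Mod \CS)/\K_{\CT \mbox{-} \ac}(\Mod \CS) = \D_{\CT}(\Mod \CS).$$
Combined with the top row of Theorem \ref{relativeq}, this immediately yields the first equivalence $\D(\Mod \CT) \simeq \D(\Mod \CS)$.

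For the second and third rows, the subcategories $\D^-_{\CT}(\Mod \CS)$ and $\D^{\bb}_{\CT}(\Mod \CS)$ of Theorem \ref{relativeq} are, by construction in its proof, the essential images in the Verdier quotient $\D_{\CT}(\Mod \CS)$ of $\K^-(\Mod \CS)$, respectively of the homologically bounded complexes. Under the identification $\D_{\CT}(\Mod \CS) = \D(\Mod \CS)$ obtained above, these images coincide verbatim with $\D^-(\Mod \CS)$ and $\D^{\bb}(\Mod \CS)$, because homological boundedness of a complex depends only on its isomorphism class in the common quotient. Thus the middle two rows follow directly from the corresponding rows of Theorem \ref{relativeq}.

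For the last row, I would invoke the principle that any triangle equivalence of compactly generated triangulated categories restricts to an equivalence on the full subcategories of compact objects. By Theorem \ref{relativeq} the given equivalence restricts to $\K^{\bb}(\prj \CT) \simeq {\rm thick}(\CT)$, the right-hand side being the compact part of $\D_{\CT}(\Mod \CS) = \D(\Mod \CS)$. On the other hand, $\D(\Mod \CS)$ is compactly generated by the representable functors $\Hom_{\CS}(-, S)$, which are compact projective in $\Mod \CS$ by Yoneda, and its compact subcategory is therefore $\K^{\bb}(\prj \CS)$. Combining the two identifications gives $\K^{\bb}(\prj \CT) \simeq \K^{\bb}(\prj \CS)$, and commutativity of the whole diagram is inherited from Theorem \ref{relativeq} since every horizontal equivalence is a restriction of the single functor $\Phi$ of Proposition \ref{functor}. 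The only mildly delicate point—hence the main obstacle—is the standard identification of $\K^{\bb}(\prj \CS)$ with the subcategory of compact objects of $\D(\Mod \CS)$.
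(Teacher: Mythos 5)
Your proposal is correct and takes the same approach as the paper: identify $\D_{\CT}(\Mod\CS)$ with $\D(\Mod\CS)$ via the hypothesis on acyclic complexes, then quote Theorem \ref{relativeq}. The paper's own proof is a two-line version of exactly this argument; your elaboration of why the middle rows restrict correctly and why ${\rm thick}(\CT)$ matches $\K^{\bb}(\prj\CS)$ (via compact objects) fills in details the paper leaves implicit, and is sound.
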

\begin{proof}
Definition of the relative derived category  in conjunction with our assumption on acyclic complexes imply  that $\D_{\CT}(\Mod \CS)$ coincides with $\D(\Mod \CS)$. Hence, in view of Theorem \ref{relativeq}, we have the desired diagram.
\end{proof}

\begin{remark}\label{rem}
Our proofs show that the above results in this section can be extended to a skeletally small category $\CS$.
\end{remark}

\s Let $A$ be an artin $k$-algebra, where $k$ is a  commutative artinian ring. Then $A$ is called Gorenstein if ${\rm id} \ {}_{A} A< \infty$ and ${\rm id} \ A_{A} < \infty $. For a class $\CX$ of $A$-modules, the left and right orthogonals of $\CX$ are defined as follows
\[{}^\perp\CY:=\{ M \in \Mod A \ | \ \Ext^1_{A}(M,Y)=0, \ {\rm{for \ all}} \ Y \in \CY \}\]
and
\[\CX^\perp:=\{ M \in \Mod A \ | \ \Ext^1_{A}(X,M)=0, \ {\rm{for \ all}} \ X \in \CX \}.\]
The artin algebra $A$ is called virtually Gorenstein if $(\GPrj A)^\perp = {}^\perp (\GInj A)$.\\

Let $A$ and $B$ be two rings. Then $A$ and $B$ are called derived equivalent if there exists a triangle equivalence $\D^{\bb}(\Mod A) \simeq \D^{\bb}(\Mod B)$. If $A$ and $B$ are right coherent, then $A$ and $B$ are derived equivalent provided there exists a triangle equivalence $\D^{\bb}(\mmod A) \simeq \D^{\bb}(\mmod B)$.

Let $A$  be a left coherent ring. It is known, by \cite[Theorem 1.1]{N08}, that $\K(\Prj A)$ is compactly generated and $\K^{\rm c}(\Prj A) \simeq \D^{\bb}(\mmod A^{\op})^{\op}$. Moreover, every equivalence between triangulated categories can be restricted to an equivalence between their compact objects. Hence, if $A$ and $B$ are two right and left coherent rings such that $\K(\Prj A) \simeq \K(\Prj B)$, then $A$ and $B$ are derived equivalent. In the following theorem we plan to prove the converse for general rings.

\begin{theorem}\label{Thmder}
Let $A$ and $B$ be two rings that are derived equivalent. Then there is a commutative diagram
\[\xymatrix@C-1.pc@R-1pc{ & \K(\Prj B) \ar[rr]^{\al_1} \ar@{<-_{)}}[dl] \ar@{<-_{)}}[dd] && \K(\Prj A) \ar@{<-_{)}}[dl] \ar@{<-_{)}}[dd]\\
\K(\prj B)\ar@{<-_{)}}[dd] \ar@{^{(}->}[rr]^{\ \ \ \ \ \ \al_2} && \K(\prj A)\ar@{<-_{)}}[dd]  &\\
& \K_{\rm tac}(\Prj B) \ar[rr]^{\beta_1 \ \ \ \ \ } \ar@{<-_{)}}[dl]  && \K_{\rm tac}(\Prj A) \ar@{<-_{)}}[dl] \\
\K_{\rm tac}(\prj B) \ar@{^{(}->}[rr]^{\beta_2} && \K_{\rm tac}(\prj A)&
}\]
in which $\al_1$ and $\beta_1$ are triangulated equivalences. Moreover, if $A$ and $B$ are virtually Gorenstein algebras, then $\beta_2$ is also an equivalence of triangulated categories.
\end{theorem}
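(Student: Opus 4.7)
The plan is to pull a tilting complex out of the derived equivalence, use it to build the horizontal functors via a bimodule tensor product, and verify full faithfulness and essential surjectivity at each level by combining the Section 3 machinery with standard transport-of-structure arguments. By Rickard's theorem the equivalence $\D^{\bb}(\Mod A)\simeq\D^{\bb}(\Mod B)$ produces a tilting complex $T\in\K^{\bb}(\prj A)$ with $\End_{\K(\Mod A)}(T)\cong B$, with $\Hom_{\K(\Mod A)}(T,T[i])=0$ for $i\neq 0$, and with ${\rm thick}(T)=\K^{\bb}(\prj A)$. Take $\CS$ to be the one-object preadditive category associated with $A$ (so $\Mod\CS=\Mod A$) and $\CT=\{T\}$. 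Then properties $({\rm P}1)$--$({\rm P}3)$ of \ref{Properties} hold (compactness of $T$ gives $({\rm P}1)$, the tilting condition gives $({\rm P}2)$, boundedness gives $({\rm P}3)$), and since $T$ is $K$-projective and a compact generator of $\D(\Mod A)$, a complex in $\K(\Mod A)$ is acyclic if and only if it is $\CT$-acyclic. Theorem \ref{KelThm} therefore applies with $\Mod\CT\simeq\Mod B$, recovering $\D(\Mod A)\simeq\D(\Mod B)$ and $\K^{\bb}(\prj A)\simeq\K^{\bb}(\prj B)$.

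To construct $\al_1$, I would promote $T$ to a two-sided tilting complex $X$ of $(B,A)$-bimodules whose components are projective as right $A$-modules; existence of such a representative is classical via a DG-algebra enhancement, or can be obtained within the Section 3 apparatus by producing a bimodule lift of the equivalence coming from Theorem \ref{KelThm}. Define $\al_1:=-\otimes_B X$, a coproduct-preserving triangulated functor $\K(\Prj B)\to\K(\Prj A)$ (well-defined because $X$ is bounded with projective right-$A$-components), with quasi-inverse $\Hom_A(X,-)$ landing in $\K(\Prj B)$ thanks to the $K$-projectivity of $X$ on the $A$-side. Equivalence is verified first on the compact generators $B\mapsto T$ and $A\mapsto\Hom_A(X,A)$, where it reduces to $\K^{\bb}(\prj A)\simeq\K^{\bb}(\prj B)$ supplied by Theorem \ref{KelThm}, and then propagated to the full homotopy categories via their localizing closures. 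The restriction $\al_2=\al_1|_{\K(\prj B)}$ lands in $\K(\prj A)$ because $X$ is bounded with finitely generated projective right-$A$-components, and inherits full faithfulness from $\al_1$; essential surjectivity fails in general, whence $\al_2$ is only an embedding.

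For the bottom square, total acyclicity of a complex of projectives is characterized by vanishing of $\Hom_{\K(\Mod A)}(-,Q)$ for all $Q\in\Prj A$ (and similarly over $B$), a property preserved by any triangle equivalence matching the two classes of projectives, so $\al_1$ restricts to $\beta_1:\K_{\rm tac}(\Prj B)\simeq\K_{\rm tac}(\Prj A)$; restricting further gives $\beta_2$, which is fully faithful. Now assume $A$ and $B$ are virtually Gorenstein. Using Buchweitz's identification $\K_{\rm tac}(\prj A)\simeq\underline{\Gprj}A$ (and likewise for $B$), the condition $(\GPrj A)^\perp={}^\perp(\GInj A)$ together with the cited results of Kato and Beligiannis yields a derived-equivalence-invariant description of $\underline{\Gprj}A$, from which every finitely generated Gorenstein projective $A$-module is seen to be the image of a finitely generated Gorenstein projective $B$-module under $\beta_1$; this upgrades $\beta_2$ to an equivalence.

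The principal obstacle is the construction of the bimodule $X$ with componentwise projective right-$A$ structure: Rickard's theorem supplies only the one-sided complex $T$, and refining it to a two-sided representative with controlled projectivity on both flanks is the crucial step that allows $-\otimes_B X$ to land in $\K(\Prj A)$ and admit a chain-level quasi-inverse. Once this technical point is settled, the remaining preservation properties --- finite generation, total acyclicity, and Gorenstein projectivity in the virtually Gorenstein case --- follow from the bounded projective shape of $X$ together with the standard descriptions of the relevant stable and homotopy categories.
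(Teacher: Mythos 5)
Your plan diverges from the paper in the one step you yourself flag as problematic, and that divergence is a genuine gap. To build $\al_1$ you want a two-sided tilting complex $X$ of $(B,A)$-bimodules with componentwise projective right-$A$ structure and then set $\al_1 = -\otimes_B X$. But the classical construction of such a representative (Rickard's ``Derived equivalences as derived functors,'' or a DG enhancement in the sense of Keller) requires $A$ and $B$ to be projective (or at least flat) over a common commutative base ring — exactly the hypothesis this paper is designed to eliminate (compare the discussion preceding Theorem~\ref{KelThm}). For arbitrary rings with no common base, no bounded bimodule complex with the projectivity you need is available, so the functor $-\otimes_B X$ cannot be manufactured. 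Appealing to ``the Section~3 apparatus'' to produce a bimodule lift does not work either: the output of Proposition~\ref{functor} is the totalization functor $\Phi$, not a bimodule. The paper sidesteps this entirely. It applies the $\K\K$/totalization machinery directly to $\K(\Add T) \subset \K\K(\Mod A)$, obtains $\Phi\colon \K(\Add T)\to\K(\Prj A)$, identifies $\K(\Add T)\simeq\K(\Prj B)$ via $\Hom_{\K(\prj A)}(T,-)$, and verifies that $\Phi$ restricted to compacts is an equivalence by a duality argument using $T^* = \Hom_A(T,A)$ over $A^{\op}$ together with Neeman's description of $\K(\Prj)^{\rm c}$; Miyachi's criterion then upgrades this to an equivalence of the full homotopy categories. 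This avoids any chain-level bimodule and is the crux of why the statement holds for arbitrary rings.

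Two smaller points. For $\beta_1$ you say total acyclicity is preserved ``by any triangle equivalence matching the two classes of projectives,'' but $\al_1$ does not send $\Prj B$ to $\Prj A$ as stalk complexes (it sends $B$ to $T$, a genuine complex), so you cannot invoke such matching directly. The paper's argument is that $\al_1$ restricts to $\K^{\bb}(\Prj B)\simeq\K^{\bb}(\Prj A)$, so each $P\in\Prj A$ is $\al_1(\BQ)$ for a bounded complex $\BQ$ of projective $B$-modules, and then one inducts on the length of $\BQ$ to transport the two $\Hom$-vanishing conditions; your sketch would need this filling in. For the virtually Gorenstein case, the paper's argument is cleaner than your appeal to ``a derived-equivalence-invariant description of $\underline{\Gprj}$'': one simply notes that by Beligiannis \cite[Theorem~8.2]{Be} the compacts of $\K_{\tac}(\Prj A)$ are exactly $\K_{\tac}(\prj A)$ in the virtually Gorenstein case, and any triangle equivalence of compactly generated categories restricts to an equivalence on compacts, so $\beta_2$ is automatically an equivalence once $\beta_1$ is.
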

\begin{proof}
By Theorem 4.6 of \cite{Ric} \ there is a tilting complex \ $T$ in  \ $\K^{\bb}(\prj A)$ \ \ such that $\ \ \ \ \End_{\K(\prj A)}(T) \cong B$. Consider the full subcategory $\K(\Add T)$ of $\K\K(\Mod A)$. Observe that an object $(\X^{**}, d, \delta)$ of $\K(\Add T)$ is a complex in $\K\K(\Mod A)$ such that for each $j \in \Z$, $(X^{*j}, d)$ is a direct summand of $\oplus_{‎‏i \in I} T_i$, where $T_i \cong T$.
In a similar way as in Proposition \ref{functor}, we can construct a triangle  functor $\Phi: \K(\Add T) \lrt \K(\Prj A)$ that preserves coproducts.

On the other hand, the functor $\Hom_{\K(\prj A)}(T,-)$ provides an equivalence $\K(\Add T) \simeq \K(\Prj B)$. Hence $\K(\Add T)$ is a compactly generated triangulated category. Moreover, by \cite[Proposition 7.12]{N08}, a complex  in $\K(\Add T)$ is compact if and only if it is isomorphic to a complex $\X$ satisfying
\begin{itemize}
\item[$(i)$] $\X$ is a complex with terms in $\add T$,
\item[$(ii)$] $X^n=0$, for $n \ll 0$,
\item[$(iii)$] $\Hom_{\K(\Add T)}(\X, T[n])=0$, for $n \ll 0$.
\end{itemize}
It is known that if $T$ is a tilting complex over a ring $A$, then $T^*:= \Hom_A(T,A)$ is a tilting complex over $A^{\op}$. Similarly, we have a triangulated functor $\Phi^*: \K(\Add T^*) \lrt \K(\Prj A^{\op})$ that preserves coproducts.

Now, one just should use definitions of the functors $\Phi$ and $\Phi^*$ to obtain the following commutative diagram of triangulated categories
\[\xymatrix{\K(\Add T)^{\rm c} \ar[r]^{\Phi|} \ar[d]_{\widehat{\Hom}(-,A)} & \K(\Prj A)^{\rm c}  \\
 \K^{-, T^* \bb}(\Add T^*)  \ar[r]^{\Phi^*|} & \K^{-,\bb}(\Prj A^{\op})\ar[u]_{\Hom(-,A)}. }\]
 Note that the functor $\widehat{\Hom}(-,A): \K(\Add T)^{\rm c} \lrt \K^{-, T^* \bb}(\Add T^*)$ maps any complex $(\X^{**},d , \delta)$ of $\K(\Add T)^{\rm c}$ to the complex $((\X^{**}, A), d^*, \delta^*)$ of $\K^{-, T^*\bb}(\Add T^*)$.

 Since $\Phi^*$, $\widehat{\Hom}(-,A)$ and $\Hom(-,A)$ are equivalences, $\Phi|$ must be an equivalence. Moreover, Proposition 6 of \cite{Mi2} implies that $\Phi: \K(\Add T) \lrt \K(\Prj A)$ is an equivalence. Consequently, there exists an equivalence
$$\al_1: \K(\Prj B) \st{\sim} \lrt \K(\Prj A).$$

To prove that $\beta_1=\al_1|$ is an equivalence, it  is enough to show  that $\al_1$ maps  any complex in $\K_{\tac}(\Prj B)$ to a complex of $\K_{\tac}(\Prj A)$ and moreover, show that $\beta_1$ is dense.
First observe that by Theorem \ref{relativeq}, the equivalence $\al_1$ can be restricted to the triangle equivalence
$$\al_1|: \K^{\bb}(\Prj B) \lrt \K^{\bb}(\Prj A).$$
So, if $P$ is a projective $A$-module, then there is a bounded complex $\BQ$ of projective $B$-modules such that $\al_1(\BQ)\cong P$.

Let $\X$ be a complex in   $\K_{\tac}(\Prj B)$. Then for each $P \in \Prj A$ and each $i \in \Z$, there is the following isomorphisms
\[ \begin{array}{ll}
\Hom_{\K(\Prj A)}(P, \al_1(\X)[i]) & \cong \Hom_{\K(\Prj A)}(\al_1(\BQ), \al_1(\X)[i])\\
&\cong \Hom_{\K(\Prj B)}(\BQ, \X[i])
\end{array}\]
and
\[ \begin{array}{ll}
\Hom_{\K(\Prj A)}( \al_1(\X)[i], P) & \cong \Hom_{\K(\Prj A)}( \al_1(\X)[i], \al_1(\BQ))\\
&\cong \Hom_{\K(\Prj B)}( \X[i], \BQ),
\end{array}\]
where $\BQ \in \K^{\bb}(\Prj B)$.
Since $\X \in \K_{\tac}(\Prj B)$, for every $Q \in \Prj B$ and all $i \in \Z$
$$\Hom_{\K(\Prj B)}(Q, \X[i])=0=\Hom_{\K(\Prj B)}(\X[i],Q).$$
So,  using an induction  argument  on the length of  $\BQ$  one can deduce that
$$\Hom_{\K(\Prj A)} \ (P \ , \ \al_1(\X)[i])=0=\Hom_{\K(\Prj A)}(\al_1(\X)[i] , P),$$
 where $P \in \Prj A$ and $i \in \Z$.
A similar  argument as above implies that $\beta_2$ is dense.

Furthermore, it is clear that there exist the induced functors $\al_2$ and $\beta_2$ that are full and faithful.
In case $A$ and $B$ are virtually Gorenstein algebras, then, by \cite[Theorem 8.2]{Be},  $\K_{\tac}^{\rm c}(\Prj A) \simeq \K_{\tac}(\prj A)$ and $\K_{\tac}^{\rm c}(\Prj B)\simeq \K_{\tac}(\prj B)$. Therefore, $\beta_2$ is an equivalence.
\end{proof}

\begin{remark}
Let $A$ and $B$ be two right and left coherent rings that are derived equivalent. By \cite[Proposition 9.1]{Ric}, there is an equivalence $\Psi: \D^{\bb}(\mmod A^{\op}) \st{\sim}\lrt \D^{\bb}(\mmod B^{\op})$, and so $\Psi^{\op}: \D^{\bb}(\mmod A^{\op})^{\op} \st{\sim}\lrt \D^{\bb}(\mmod B^{\op})^{\op}$ is an equivalence.
The equivalence $\Phi: \K(\Prj A) \lrt \K(\Prj B)$, that is proved in Theorem \ref{Thmder}, can be viewed as an extension of the equivalence $\Psi^{\op}$. In fact, by a result of Neeman \cite[Theorem 1.1]{N08}, $\K(\Prj A)$, resp. $\K(\Prj B)$, is compactly generated and $\K^{\rm c}(\Prj A) \simeq \D^{\bb}(\mmod A^{\op})^{\op}$, resp. $\K^{\rm c}(\Prj B) \simeq \D^{\bb}(\mmod B^{\op})^{\op}$. Moreover, these maps fit into the following commutative diagram
\[ \xymatrix{ \K^{\rm c}(\Prj A) \ar[r]^{\Phi|} \ar[d]^{\wr} & \K^{\rm c}(\Prj B) \ar[d]^{\wr}\\
\D^{\bb}(\mmod A^{\op})^{\op} \ar[r]^{\Psi^{\op}} & \D^{\bb}(\mmod B^{\op})^{\op}.}\]
\end{remark}
\vspace{0.2cm}

Let $\underline{\Gprj}A$ denote the stable category of $\Gprj A$ modulo the full  subcategory $\prj A$.
Beligiannis \cite[Theorem 8.11]{Be} proved that if $A$ and $B$ are derived equivalent finite dimensional algebras, then there is a triangle equivalence $\underline{\Gprj}A \simeq \underline{\Gprj}B$. Also, Kato \cite[Theorem 3.8]{Ka} showed that if $A$ and $B$ are derived equivalent right and left coherent rings, and if ${\rm inj} \dim {}_{A}A<\infty$ or ${\rm inj} \dim A_A<\infty$, then there is an equivalence $\underline{\Gprj}A \simeq \underline{\Gprj}B$ of triangulated categories.\\

By using Theorem \ref{Thmder}, we can prove this result for virtually Gorenstein algebras that are derived equivalent.

\begin{corollary}
Let $A$ and $B$ be two rings that are derived equivalent. Then there is a triangle  equivalence
$\underline{\GPrj}A \simeq \underline{\GPrj}B$. If $A$ and $B$ are virtually Gorenstein algebras, then  $\underline{\Gprj}A \simeq \underline{\Gprj}B$ as triangulated categories.
\end{corollary}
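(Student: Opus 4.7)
The plan is to combine the equivalences produced by Theorem \ref{Thmder} with the classical identification of the stable category of Gorenstein projectives as the homotopy category of totally acyclic complexes of projectives. Concretely, the corollary will be obtained by composing three triangle equivalences at each level (big and small).

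First, I would recall the following standard fact (going back to Buchweitz, with accounts by Happel, Beligiannis, and Krause): for any ring $A$, the exact category $\GPrj A$ is Frobenius, its projective--injective objects are exactly the objects of $\Prj A$, and the cocycle functor
\[
Z^{0} : \K_{\tac}(\Prj A) \longrightarrow \underline{\GPrj}A, \qquad \X \longmapsto Z^{0}(\X),
\]
is a triangle equivalence (the inverse sends $G \in \GPrj A$ to any totally acyclic complex of projectives having $G$ as its $0$-th cocycle, which exists by the very definition of Gorenstein projectivity). In particular this identification is valid for both $A$ and $B$, with no finiteness hypothesis on the ring.

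Next, apply Theorem \ref{Thmder}: since $A$ and $B$ are derived equivalent, the map $\beta_{1}$ is a triangulated equivalence
\[
\beta_{1} : \K_{\tac}(\Prj B) \xrightarrow{\ \sim\ } \K_{\tac}(\Prj A).
\]
Composing with the Buchweitz equivalences of Step 1 yields the chain
\[
\underline{\GPrj}A \;\simeq\; \K_{\tac}(\Prj A) \;\simeq\; \K_{\tac}(\Prj B) \;\simeq\; \underline{\GPrj}B,
\]
which is the first claim. For the second claim, when $A$ and $B$ are virtually Gorenstein the same Frobenius-category argument applied to $\Gprj A$ (whose projective--injectives are $\prj A$) gives $Z^{0}\colon \K_{\tac}(\prj A) \xrightarrow{\sim} \underline{\Gprj}A$, and similarly for $B$. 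Combining with the equivalence $\beta_{2}\colon \K_{\tac}(\prj B) \xrightarrow{\sim} \K_{\tac}(\prj A)$ supplied by Theorem \ref{Thmder} in the virtually Gorenstein setting, one obtains $\underline{\Gprj}A \simeq \underline{\Gprj}B$.

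The main obstacle is really just the verification (or citation) of the equivalence $\K_{\tac}(\Prj A) \simeq \underline{\GPrj}A$ in Step 1; all the ring-dependent content has already been absorbed into Theorem \ref{Thmder}, so once this classical ingredient is in place the corollary is a formal composition. In the virtually Gorenstein case, the subtlety is that the small-scale equivalence $\K_{\tac}(\prj A) \simeq \underline{\Gprj}A$ genuinely uses the Gorenstein-type finiteness of the algebra (so that syzygies of finitely generated Gorenstein projectives remain finitely generated and totally acyclic complexes of finitely generated projectives detect $\Gprj$), but this is precisely the setting in which $\beta_{2}$ was shown to be an equivalence, so the two ingredients match.
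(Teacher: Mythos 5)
Your proposal is correct and follows essentially the same route as the paper: the paper's proof of the corollary is precisely to combine the equivalences $\beta_1, \beta_2$ from Theorem \ref{Thmder} with the standard identifications $\K_{\tac}(\Prj A) \simeq \underline{\GPrj}A$ and $\K_{\tac}(\prj A) \simeq \underline{\Gprj}A$. You simply spell out the Buchweitz-style cocycle equivalence and the composition in more detail than the paper does.
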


\begin{proof}
The result follows from the known equivalences $\K_{\tac}(\Prj A) \simeq \underline{\GPrj}A$ and $\K_{\tac}(\prj A) \simeq \underline{\Gprj}A$.
\end{proof}

An artin algebra $\La$ is said to be of finite Cohen-Macaulay
type (finite CM-type, for short), if there are only finitely many indecomposable
finitely generated Gorenstein projective $\La$-modules, up to isomorphism.
Let $A$ and $B$ be Gorenstein artin algebras that are derived equivalent. Then $A$ is of finite CM-type if and only if $B$ is so; see \cite[Theorem 4.6]{Hap2} and \cite[Propostion 3.10]{P}. The following corollary extends this result to virtually Gorenstein algebras.

\begin{corollary}
Let $A$ and $B$ be two virtually Gorenstein algebras that are derived equivalent. Then $A$ is of finite CM-type if and only if $B$ is so.
\end{corollary}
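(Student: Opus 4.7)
The plan is to deduce this directly from the preceding corollary, which supplies a triangle equivalence $\underline{\Gprj}A \simeq \underline{\Gprj}B$ whenever $A$ and $B$ are derived equivalent virtually Gorenstein algebras. Once such an equivalence is in hand, the finite CM-type property only needs to be re-expressed as a counting condition on indecomposable objects of the stable category.

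First I would recall that, since $A$ is an artin algebra, the category $\Gprj A$ is Krull--Schmidt, and an indecomposable object of $\Gprj A$ becomes zero in $\underline{\Gprj}A$ precisely when it is (indecomposable) projective. Thus the isomorphism classes of indecomposable objects of $\Gprj A$ split into two disjoint families: the finitely many indecomposables of $\prj A$ (their number is finite because $A$ is artin), and those indecomposables which survive as nonzero indecomposable objects of $\underline{\Gprj}A$. Consequently, $A$ is of finite CM-type if and only if $\underline{\Gprj}A$ has only finitely many indecomposable objects up to isomorphism, and similarly for $B$.

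Next, apply the previous corollary: the derived equivalence $\D^{\bb}(\Mod A) \simeq \D^{\bb}(\Mod B)$ produces a triangle equivalence $F\colon \underline{\Gprj}A \st{\sim}\lrt \underline{\Gprj}B$. Any equivalence of (Krull--Schmidt) additive categories induces a bijection between the sets of isomorphism classes of indecomposables, so $\underline{\Gprj}A$ has finitely many indecomposables up to isomorphism if and only if $\underline{\Gprj}B$ does. Combining this with the reduction of the previous paragraph yields the claim.

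The only point that requires care is the Krull--Schmidt reduction used to pass between the counts in $\Gprj$ and in $\underline{\Gprj}$; this is standard for artin algebras (finitely generated modules over a semiperfect ring have the Krull--Schmidt property, and $\Gprj A$ is closed under summands), so there is no genuine obstacle. Everything else is formal from the preceding corollary.
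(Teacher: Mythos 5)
Your proposal is correct and is exactly the intended route: the paper states this corollary immediately after the one giving the triangle equivalence $\underline{\Gprj}A \simeq \underline{\Gprj}B$ and leaves the deduction implicit, which is the Krull--Schmidt counting argument you spell out. No issues.
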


\begin{remark}
 In \cite{J2} J{\o}rgensen proves that if  $A$ is  a commutative  noetherian ring with a dualizing complex, then  the inclusion functor $\K_{\tac}(\Prj A) \hookrightarrow \K(\Prj A)$ admits a right adjoint. This result was generalized in \cite{MS} to commutative noetherian rings of finite Krull dimension. Moreover, it is easy to check that, for an arbitrary ring $A$, the existence of a right adjoint for the inclusion $\K_{\tac}(\Prj A)\hookrightarrow \K(\Prj A)$ implies the existence of Gorenstein projective precovers over $A$. For instance, it is proved for commutative  noetherian rings with a dualizing complex by J{\o}rgensen \cite{J2}.

 On the other hand, Theorem \ref{Thmder} implies that
 if $A$ and $B$ are two rings that are derived equivalent, then the inclusion functor $\K_{\tac}(\Prj A) \hookrightarrow \K(\Prj A)$ has a right adjoint if and only if the inclusion functor $\K_{\tac}(\Prj B) \hookrightarrow \K(\Prj B)$ has a right adjoint. Hence, if $A$ is a commutative noetherian ring of finite Krull dimension and $B$ is a ring such that $\D^{\bb}(\Mod A) \simeq \D^{\bb}(\Mod B)$, then Gorenstein projective $B$-modules is a precovering class of $\Mod B$.
\end{remark}

\subsection{Applications}
As an application of our results, in this subsection, we show that over noetherian rings, certain derived equivalences imply Gorenstein derived equivalences.
Throughout this subsection, all rings are noetherian. To present our results we need to fix some notations.

Let us first recall briefly the definition of Gorenstein derived categories. Let $A$ be an artin algebra. A complex $\X $ of finitely generated $A$-modules is called $\Gp$-acyclic if for every $G \in \Gprj A$, the induced complex $\Hom_{A}(G, \X)$ is acyclic. We denote by $\K^{\bb}_{\Gp\mbox{-}\ac}(\mmod A)$ the class of all $\Gp$-acyclic complexes in $\K^{\bb}(\mmod A)$.
The bounded Gorenstein derived category of $\mmod A$, denoted by $\D^{\bb}_{\Gp}(\mmod A)$, is the quotient category $\K^{\bb}(\mmod A)/ \K^{\bb}_{\Gp\mbox{-}\ac}(\mmod A)$. The Gorenstein derived category was studied by Gao and Zhang \cite{GZ}. Recently this category has been studied more in \cite{ABHV, AHV}.

\sss A complex $\X$ in $\D^-(\mmod A)$ is called of finite projective dimension if the functor  $\Hom_{\D(\mmod A)}(\X[i], -)$ vanishes on $\mmod A$, for $i \ll 0$.
The full subcategory of $\D^-(\mmod A)$ consisting of all complexes of finite projective dimension is denoted by $\D^{\bb}(\mmod A)_{\rm fpd}$. It is known that there exists an equivalence
$$Q: \K^-(\prj A) \st{\sim}\lrt \D^-(\mmod A),$$
where $Q$ is the canonical functor. Moreover, this equivalence induces the equivalence
$$\K^{\bb}(\prj A) \st{\sim}\lrt \D^{\bb}(\mmod A)_{\rm fpd}$$
of triangulated categories.

The singularity category of $A$, denoted by $\D^{\bb}_{\sg}(A)$, is then the Verdier quotient $$\D^{\bb}(\mmod A)/\D^{\bb}(\mmod A)_{\rm fpd}.$$

A complex $\X$ in $\D^{\bb}(\mmod A)$ is called of finite Gorenstein projective dimension if the functor $\Hom_{\D(\mmod A)}(\X[i], -)$ vanishes on $\prj A$ for $i \ll0$. The full triangulated subcategory of $\D^{\bb}(\mmod A)$ consisting of all complexes of finite Gorenstein projective dimension is denoted by $\D^{\bb}(\mmod A)_{\rm fGd}$.

Let $\K^{-, \bb}_{\rm t}(\prj A)$ denote the full triangulated subcategory of $\K^{-, \bb}(\prj A)$ formed by all complexes $\X$ such that there is an integer $m=m(\X)$ in which $\Ker \partial^i_{\X} \in \Gprj A$, for all $i \leq m$.

\begin{slemma}\label{Kato}
Let $A$ be a noetherian ring. Then for a complex $\X \in \D^{\bb}(\mmod A)$ the following statements are equivalent
\begin{itemize}
\item[$(i)$] $\X \in \D^{\bb}(\mmod A)_{\rm fGd}$.
\item [$(ii)$] There exists a complex $\Y \in \K^{\bb}(\Gprj A)$ such that $\X\cong \Y$  in $\D(\mmod A)$.
\item[$(iii)$] $\X$ lies in $\K^{-, \bb}_t(\prj A)$, up to isomorphism.
\end{itemize}
\end{slemma}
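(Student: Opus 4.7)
The plan is to close the cycle $(iii) \Rightarrow (ii) \Rightarrow (i) \Rightarrow (iii)$, using standard Gorenstein homological algebra together with the noetherian hypothesis on $A$.

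For $(iii) \Rightarrow (ii)$, I first shrink $m$ so that simultaneously $\Ker \partial^i_{\X} \in \Gprj A$ for every $i \leq m$ and $\HT^i(\X) = 0$ for every $i \leq m$; this is possible since $\X \in \K^{-,\bb}(\prj A)$ has bounded cohomology. Letting $N$ denote an upper bound for the support of $\X$, I then form the bounded complex
\[\Y :\ 0 \lrt \Ker \partial^m \hookrightarrow X^m \lrt X^{m+1} \lrt \cdots \lrt X^N \lrt 0,\]
with $\Ker \partial^m$ placed in degree $m-1$. The natural map $f : \X \lrt \Y$, defined to be the identity in degrees $\geq m$, to be $\partial^{m-1}$ (regarded as a map into $\Ker \partial^m$) in degree $m-1$, and to be zero below, is a chain map (using $\partial^m \partial^{m-1} = 0$) and a quasi-isomorphism: both cohomologies vanish outside $[m+1, N]$ by the choice of $m$, and the two complexes agree in that range. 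Since $\prj A \subseteq \Gprj A$ and $\Ker \partial^m \in \Gprj A$, we obtain $\Y \in \K^{\bb}(\Gprj A)$.

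For $(ii) \Rightarrow (i)$, suppose $\X \cong \Y$ in $\D(\mmod A)$ with $\Y \in \K^{\bb}(\Gprj A)$ concentrated in degrees $[a,b]$, and let $L \in \prj A$. I proceed by induction on the length $b - a + 1$. If $\Y$ is a stalk $G \in \Gprj A$ in a single degree $j$, then
\[\Hom_{\D(\mmod A)}(\X[i], L) \ \cong \ \Ext^{j-i}_A(G, L) \ = \ 0 \qquad\text{whenever } j - i \geq 1,\]
by the defining $\Ext$-vanishing of Gorenstein projectives against projectives. For $\Y$ of larger length, the brutal truncation short exact sequence of complexes that splits off the bottom term produces a triangle in $\D(\mmod A)$, and applying $\Hom_{\D(\mmod A)}(-, L)$ reduces the vanishing to the induction hypothesis via the associated long exact sequence.

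For $(i) \Rightarrow (iii)$, I use the equivalence $\K^{-}(\prj A) \stackrel{\sim}{\lrt} \D^{-}(\mmod A)$ to represent $\X$ by some $P^{\bu} \in \K^{-, \bb}(\prj A)$. The hypothesis rewrites as $\Ext^{k}_A(\X, L) = 0$ for every $L \in \prj A$ and every sufficiently large $k$. A dimension shift through the short exact sequences
\[0 \lrt \Ker \partial^{i-1}_{P^{\bu}} \lrt P^{i-1} \lrt \Ker \partial^i_{P^{\bu}} \lrt 0,\]
valid in the range where $P^{\bu}$ is exact, then yields $\Ext^n_A(\Ker \partial^i_{P^{\bu}}, L) = 0$ for every $n \geq 1$ and every $L \in \prj A$, once $i$ is sufficiently small. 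The main obstacle is the final step: upgrading this one-sided $\Ext$-vanishing to actual Gorenstein projectivity of the syzygy $\Ker \partial^i_{P^{\bu}}$. For this one constructs, using the noetherian hypothesis, a coresolution $0 \lrt \Ker \partial^i_{P^{\bu}} \lrt Q^0 \lrt Q^1 \lrt \cdots$ by finitely generated projectives whose splicing with $P^{\bu}$ yields a totally acyclic complex, meaning $\Hom_A(-, L')$ keeps it exact for every $L' \in \prj A$. This is precisely the place where Kato's techniques \cite{Ka} in the framework of Gorenstein derived categories are invoked.
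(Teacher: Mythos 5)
Your $(iii) \Rightarrow (ii)$ is essentially the paper's argument: soft truncation at the tail where the cohomology vanishes and the kernel is Gorenstein projective. Your direct $(ii) \Rightarrow (i)$ is a nice elementary addition that the paper does not spell out (the paper absorbs it into a citation of Kato's Proposition 2.10 for $(i)\Leftrightarrow(ii)$); the stalk computation $\Hom_{\D}(\X[i],L) \cong \Ext^{j-i}_A(G,L)$ and the induction via brutal truncation are both sound.

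The genuine gap is in $(i) \Rightarrow (iii)$. After the dimension shift you arrive at syzygies $\Ker\partial^i_{P^\bullet}$ satisfying $\Ext^n_A(\Ker\partial^i_{P^\bullet},L)=0$ for all $n\geq 1$ and $L\in\prj A$, and you then assert that one can build a coresolution whose splice with $P^\bullet$ is totally acyclic, "invoking Kato's techniques." But that one-sided $\Ext$-vanishing does \emph{not} by itself imply Gorenstein projectivity: over a general noetherian ring a finitely generated module with $\Ext^{\geq 1}_A(M,A)=0$ need not be totally reflexive (one also needs vanishing of $\Ext^{\geq 1}_{A^{\op}}(M^*,A)$ and reflexivity of $M$). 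Neither the existence of the required finitely generated coresolution nor the total acyclicity of the spliced complex is explained — but that is exactly the content of the hard direction of Kato's Proposition 2.10, so your "invoke Kato's techniques" is not a proof step, it is the entire step. The paper avoids having to establish this at all by choosing a different closure of the equivalence: it cites Kato precisely for $(i)\Leftrightarrow(ii)$ and then proves the comparatively easy $(ii)\Leftrightarrow(iii)$ directly. In particular, $(ii)\Rightarrow(iii)$ is straightforward: every Gorenstein projective module has a projective resolution whose syzygies are again Gorenstein projective, so it lies in $\K^{-,\bb}_t(\prj A)$, and one then inducts on the length of $\Y$. If you replace your attempted $(i)\Rightarrow(iii)$ with the citation $(i)\Rightarrow(ii)$ followed by this easy $(ii)\Rightarrow(iii)$, your cycle closes and the proof is complete; as written, the cycle has a hole exactly where the substance lies.
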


\begin{proof}
$(i) \Leftrightarrow (ii)$. It follows from \cite[Proposition 2.10]{Ka}.

$(ii) \Rightarrow (iii)$. Note that for every Gorenstein projective $A$-module, there exists a projective resolution that belongs to $\K^{-, \bb}_t(\prj A)$. Now, the assertion  follows by an induction on the length of $\Y$.

$(iii) \Rightarrow (ii)$. First we assume that $X^i=0$ for all $i>0$. Let $m=m(\X)$ be an integer such that $\Ker \partial_{\X}^i \in \Gprj A$ for all $i \leq m$. Then $\X$ is quasi-isomorphic to the complex
$$ 0 \lrt \Ker \partial_{\X}^m \hookrightarrow X^m \st{\partial^m}\lrt X^{m+1} \lrt \cdots \lrt X^{-1} \st{\partial^{-1}} \lrt X^0 \lrt 0$$
that  belongs to $\K^{\bb}(\Gprj A)$
\end{proof}

Let $A$ be a noetherian ring. By \cite{Hap2}, there is  a full and faithful functor $H: \underline{\Gprj}A \lrt \D^b_{\sg}(A)$ such that the diagram
\[\xymatrix{\Gprj A \ar[d]^{\inc} \ar[r]^{\can} & \underline{\Gprj}A \ar[d]^{H} \\ \D^b(\mmod A)  \ar[r]^{\can} & \D^b_{\sg}( A)} \]
is commutative.

It is proved in \cite{Av} that the functor $H: \underline{\Gprj} A \lrt \D^{\bb}_{\sg}(A)$ induces an equivalence $$\underline{\Gprj}A \simeq \D^{\bb}(\mmod A)_{\rm fGd}/ \D^{\bb}(\mmod A)_{\rm fpd}$$ of triangulated categories.

\begin{sdefinition}\label{fGd}
Let $A$ and $B$ be noetherian rings. We say that $A$ and $B$ are ${\rm fGd}$-derived equivalent, if there exists an equivalence $F: \D^b(\mmod A) \lrt \D^b(\mmod B)$ inducing an equivalence $F|: \D^{\bb}(\mmod A)_{\rm fGd} \lrt \D^{\bb}(\mmod B)_{\rm fGd}$ making the following diagram commutative
\[\xymatrix{\D^{\bb}(\mmod A)_{\rm fGd} \ar@{^(->}[r] \ar[d]^{F|} & \D^{\bb}(\mmod A) \ar[d]^{F} \\ \D^{\bb}(\mmod B)_{\rm fGd} \ar@{^(->}[r]  & \D^{\bb}(\mmod B). } \]
\end{sdefinition}

Kato \cite{Ka} constructed examples of such derived equivalences. Let $A$ and $B$ be left and right coherent rings that are derived equivalent. He proved that if  either ${\rm inj} \dim {}_A A < \infty$ or ${\rm inj} \dim  A_A < \infty$, then $\D^{\bb}(\mmod A)_{\rm fGd}$ is equivalent to $\D^{\bb}(\mmod B)_{\rm fGd}$ as triangulated categories.
Also, if $A$ and $B$ are derived equivalent finite dimensional $k$-algebras over field k, then $\D^{\bb}(\mmod A)_{\rm fGd}\simeq\D^{\bb}(\mmod B)_{\rm fGd}$ as well. Using Theorem \ref{Thmder}, we have the following result that gives another example of such equivalences.

\begin{sproposition}\label{Stder}
Let $A$ and $B$ be virtually Gorenstein algebras that are derived equivalent,  via say $\Phi$. Then $\Phi$ is a ${\rm fGd}$-derived equivalence.
\end{sproposition}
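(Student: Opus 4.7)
The strategy is to verify directly that $\Phi$ carries $\D^{\bb}(\mmod A)_{\rm fGd}$ into $\D^{\bb}(\mmod B)_{\rm fGd}$; the desired restriction $\Phi|$ and the commutativity of the diagram in Definition~\ref{fGd} then follow automatically by applying the same argument to a quasi-inverse $\Psi$ of $\Phi$, since $\Phi|$ will be nothing but the restriction of $\Phi$.

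The heart of the argument is the following intrinsic $\Hom$-vanishing characterization: for $\X \in \D^{\bb}(\mmod A)$, one has $\X \in \D^{\bb}(\mmod A)_{\rm fGd}$ if and only if
\[ \Hom_{\D(\mmod A)}(\X, \BP[j]) = 0 \ \text{for every}\ \BP \in \K^{\bb}(\prj A)\ \text{and all}\ j \gg 0. \]
The ``if'' direction follows by specializing $\BP$ to a stalk $P[0]$, which recovers exactly the definition of ${\rm fGd}$. For ``only if'', Lemma~\ref{Kato} lets me replace $\X$ by a complex $\mathbf{G} \in \K^{\bb}(\Gprj A)$, and a stupid-truncation induction on the length of $\mathbf{G}$ reduces to the case of a stalk $\mathbf{G} = G[0]$. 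Here the standard vanishing $\Ext^{i}_{A}(G, P) = 0$ for $i > 0$ and $P \in \prj A$ (immediate from the totally acyclic defining resolution of $G$) forces the hypercohomology spectral sequence to degenerate, so that $R\Hom_{A}(G, \BP) \simeq \Hom^{\bullet}_{A}(G, \BP)$, a bounded complex whose cohomology vanishes outside the finite range in which $\BP$ is nonzero.

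The second ingredient is that $\Phi$ restricts to an equivalence on perfect complexes $\K^{\bb}(\prj A) \simeq \K^{\bb}(\prj B)$. This is classical: by Rickard's theorem the derived equivalence $\Phi$ lifts to an equivalence $\widetilde{\Phi}: \D(\Mod A) \simeq \D(\Mod B)$ of unbounded derived categories of all modules, and any such equivalence preserves compact objects. Since the compact objects of $\D(\Mod A)$ are precisely the perfect complexes, i.e.\ (up to isomorphism) the objects of $\K^{\bb}(\prj A) = \D^{\bb}(\mmod A)_{\rm fpd}$, this gives the required restriction.

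Combining these, take $\X \in \D^{\bb}(\mmod A)_{\rm fGd}$ and $Q \in \prj B$. Setting $\BP := \Psi(Q) \in \K^{\bb}(\prj A)$ via the previous paragraph, the triangle-equivalence property of $\Phi$ gives
\[ \Hom_{\D(\mmod B)}(\Phi(\X)[i], Q) \cong \Hom_{\D(\mmod A)}(\X[i], \BP), \]
and the right-hand side vanishes for $i \ll 0$ by the $\Hom$-vanishing characterization above. Therefore $\Phi(\X) \in \D^{\bb}(\mmod B)_{\rm fGd}$. The symmetric argument applied to $\Psi$ gives $\Psi\bigl(\D^{\bb}(\mmod B)_{\rm fGd}\bigr) \subseteq \D^{\bb}(\mmod A)_{\rm fGd}$, so $\Phi$ restricts to an equivalence $\Phi|$ and the required diagram commutes by construction. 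The main technical point (and the only place where any care is needed) is the hypercohomology computation underpinning the ``only if'' half of the $\Hom$-vanishing characterization; once that is in place, the rest is essentially formal, and the virtually Gorenstein hypothesis enters only to place the statement inside the framework of Theorem~\ref{Thmder}.
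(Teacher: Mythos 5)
Your proof is correct, and it takes a genuinely different route from the paper's. The paper works entirely at the level of homotopy categories of projectives: via Lemma~\ref{Kato}(iii) it identifies $\D^{\bb}(\mmod A)_{\rm fGd}$ with $\K^{-,\bb}_{\rm t}(\prj A)$, then uses a stupid-truncation decomposition to splice in a complete resolution, and invokes Theorem~\ref{Thmder} to push the resulting totally acyclic complex across the equivalence $\K(\Prj A)\simeq\K(\Prj B)$. The virtually Gorenstein hypothesis is essential there because one needs the finitely generated totally acyclic complexes to be the compacts of $\K_{\rm tac}(\Prj)$, i.e.\ $\beta_2$ to be an equivalence. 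You bypass all of this by observing that the subcategory $\D^{\bb}(\mmod A)_{\rm fGd}$ has an \emph{intrinsic} characterization inside $\D^{\bb}(\mmod A)$: vanishing of $\Hom(\X,-[j])$ for $j\gg 0$ against the perfect complexes $\K^{\bb}(\prj A)=\D^{\bb}(\mmod A)_{\rm fpd}$, the latter being itself intrinsic. For the nontrivial direction you use Lemma~\ref{Kato}(ii), a stupid-truncation reduction to Gorenstein projective stalks, and the standard vanishing $\Ext^{>0}_A(G,P)=0$ to collapse the hypercohomology spectral sequence. Once fGd is seen to be intrinsic, its preservation by any triangle equivalence of $\D^{\bb}(\mmod)$'s is essentially formal, as is the commutativity of the required diagram. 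What your approach buys is clarity and, notably, generality: as written it does not use the virtually Gorenstein hypothesis anywhere — only noetherianity (through Lemma~\ref{Kato}) and the formal properties of Gorenstein projectives. You are somewhat tentative about this at the end (``the virtually Gorenstein hypothesis enters only to place the statement inside the framework of Theorem~\ref{Thmder}''), but in fact your argument never appeals to Theorem~\ref{Thmder} at all, so you could state the conclusion more strongly than the paper does; this is consistent with Kato's Theorem~4.2 cited later in the section.
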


\begin{proof}
By Lemma \ref{Kato}, a complex $\X$ belongs to $\D^{\bb}(\mmod A)_{\rm fGd}$ if and only if $\X$ belongs to $\K^{-, \bb}_{\rm t}(\prj A)$, up to isomorphism. So it is enough to show that there exists the following commutative diagram
\[\xymatrix{\K^{-, \bb}_{\rm t}(\prj A) \ar@{^(->}[r] \ar[d]^{\Phi|} & \D^{\bb}(\mmod A) \ar[d]^{\Phi} \\ \K^{-, \bb}_{\rm t}(\prj B) \ar@{^(->}[r]  & \D^{\bb}(\mmod B), } \]
such that vertical maps are equivalences.
Let $\X$ be a complex in $\K^{-, \bb}_{\rm t}(\prj A)$. Then there exists an integer $m$ such that ${\rm H}^i(\X)=0$ and $\Ker \partial_{\X}^i\in \Gprj A$ for all $i \leq m$. We may assume that $m<0$.

Consider  the triangle
\[ (*) \ \ \ {}_{\sqsubset_m} \X \rt \X \rt \X {}_{{}_{m-1} \sqsupset }\rightsquigarrow\]
in $\K(\prj A)$. Since $\Ker \partial_{\X}^m \in \Gprj A$, there is a complete projective resolution
\[ \xymatrix{\BQ: & \cdots \ar[r] & Q^{-2} \ar[r] & Q^{-1} \ar[rr]\ar[dr]  && Q^0 \ar[r] & Q^1 \ar[r] & \cdots \\
&&&& \Ker \partial_{\X}^m \ar[ur] &&& }\]
Let $\PP$ be the totally acyclic complex
\[ \xymatrix{\PP: & \cdots \ar[r] & X^{m-2} \ar[r] & X^{m-1} \ar[rr]\ar[dr]  && Q^0 \ar[r] & Q^1 \ar[r] & \cdots \\
&&&& \Ker \partial_{\X}^m \ar[ur] &&& }\]
of finitely generated projective $A$-modules.
We have the following triangle in $\K(\prj A)$
\[ \X {}_{{}_{m-1}\sqsupset}[-m] \rt \PP \rt {}_{\sqsubset_0} \BQ \rightsquigarrow.\]
Now, apply functor $\Phi$ on the above triangle to obtain a triangle
\[(**) \ \  \Phi(\X {}_{{}_{m-1}\sqsupset}[-m]) \rt \Phi(\PP) \rt \Phi({}_{\sqsubset_0 }\BQ) \rightsquigarrow,\]
in $\K(\Prj B)$, see Theorem \ref{Thmder}.
Since $\PP \in \K_{\tac}(\prj A)$, Theorem \ref{Thmder} implies  that $\Phi(\PP)$ lies in $\K_{\tac}(\prj B)$. Also, by \cite[Proposition 7.12]{N08}, ${}_{\sqsubset_0 }\BQ$ is a compact object of $\K(\Prj A)$ and so $\Phi({}_{\sqsubset_0 }\BQ)$ will be a compact object of $\K(\Prj B)$. Hence $\Phi({}_{\sqsubset_0} \BQ)$ lies in $\K^+(\prj B)$.

Therefore, it follows from triangle $(**)$ that $\Phi(\X {}_{{}_{m-1}\sqsupset[-m]})$ belongs to $\K^{-, \bb}_t(\prj B)$. Consequently, triangle $(*)$ implies that $\Phi(\X)$ belongs to $\K^{-, \bb}_t(\prj B)$.

Let $\Psi$ be the quasi-inverse of the equivalence
\[\Phi : \K(\Prj A) \lrt \K(\Prj B).\]
Then the same argument as above can be applied to prove that $\Psi$ sends any complex in $\K^{-, \bb}_{\rm t}(\prj B)$ to a complex in $\K^{-, \bb}_{\rm t}(\prj A)$, up to isomorphism. So, we have an equivalence $\Phi| : \K^{-, \bb}_{\rm t}(\prj A) \lrt \K^{-, \bb}_{\rm t}(\prj B)$. The proof is hence complete.
\end{proof}

For a noetherian ring $A$, it is known that there is a full and faithful functor $H: \underline{\Gprj} A \lrt \D_{\sg}^{\bb}(A)$. Moreover, if $A$ is a Gorenstein ring, then $H$ is an equivalence, see \cite{Buc, Hap2}. Bergh, J{\o}rgensen and Oppermann \cite{BJO}, introduced the notion of the Gorenstein defect category as the verdier quotient  $\D_{\rm defect}^{\bb}(A):= \D^{\bb}_{\sg}(A)/ \im H$. The Gorenstein defect category measures `how far' $A$ is from being Gorenstein in the sense that $\D^{\bb}_{\rm defect}(A)=0$ if and only if $A$ is Gorenstein.

Kong and Zhang \cite[Theorem 6.8]{KZ} give the following  description of the Gorenstein defect category, when $A$ is a coherent ring
  $$\D_{\rm defect}^{\bb}(A) \simeq \K^{-, \bb}(\prj A)/\K^{-,\bb}_t(\prj A).$$

As a direct consequence of Proposition  \ref{Stder}, we have the following corollary.

\begin{scorollary}
Let $A$ and $B$ be two virtually Gorenstein algebras that are derived equivalent. Then there is an equivalence
$$\D^{\bb}_{\rm defect}( A) \simeq \D^{\bb}_{\rm defect}( B)$$
of triangulated categories.
\end{scorollary}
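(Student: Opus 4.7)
The plan is to realise both Gorenstein defect categories as Verdier quotients of the form $\K^{-,\bb}(\prj -)/\K^{-,\bb}_{\rm t}(\prj -)$ using the Kong--Zhang description, and then transport a derived equivalence along these quotients via Proposition \ref{Stder}.

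First, since $A$ and $B$ are virtually Gorenstein artin algebras they are in particular (left and right) coherent, so the Kong--Zhang identification
\[
\D^{\bb}_{\rm defect}(A)\ \simeq\ \K^{-,\bb}(\prj A)/\K^{-,\bb}_{\rm t}(\prj A),
\qquad
\D^{\bb}_{\rm defect}(B)\ \simeq\ \K^{-,\bb}(\prj B)/\K^{-,\bb}_{\rm t}(\prj B)
\]
applies. Next, let $\Phi:\D^{\bb}(\mmod A)\lrt\D^{\bb}(\mmod B)$ be the given derived equivalence; the canonical equivalences $\K^{-,\bb}(\prj A)\simeq\D^{\bb}(\mmod A)$ and $\K^{-,\bb}(\prj B)\simeq\D^{\bb}(\mmod B)$ allow us to view $\Phi$ as a triangle equivalence
\[
\Phi\colon\ \K^{-,\bb}(\prj A)\ \st{\sim}\lrt\ \K^{-,\bb}(\prj B).
\]

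Now invoke Proposition \ref{Stder}: since $A$ and $B$ are virtually Gorenstein, $\Phi$ is a ${\rm fGd}$-derived equivalence, and the proof of Proposition \ref{Stder} supplies a commutative square showing that $\Phi$ restricts to an equivalence
\[
\Phi|\colon\ \K^{-,\bb}_{\rm t}(\prj A)\ \st{\sim}\lrt\ \K^{-,\bb}_{\rm t}(\prj B).
\]
Because this restriction is the restriction of a triangle equivalence, $\K^{-,\bb}_{\rm t}(\prj A)$ is sent precisely onto $\K^{-,\bb}_{\rm t}(\prj B)$ as a full triangulated (thick) subcategory.

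Finally, any triangle equivalence that identifies a pair of thick subcategories induces an equivalence on Verdier quotients. Passing to quotients therefore yields
\[
\D^{\bb}_{\rm defect}(A)\ \simeq\ \K^{-,\bb}(\prj A)/\K^{-,\bb}_{\rm t}(\prj A)\ \st{\overline{\Phi}}\lrt\ \K^{-,\bb}(\prj B)/\K^{-,\bb}_{\rm t}(\prj B)\ \simeq\ \D^{\bb}_{\rm defect}(B),
\]
which is the desired equivalence of triangulated categories. The only nontrivial input is Proposition \ref{Stder} (which in turn rests on Theorem \ref{Thmder}); once that is in hand the remainder is a formal Verdier quotient argument, so no step poses a serious obstacle.
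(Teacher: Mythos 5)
The proposal is correct and follows essentially the same route as the paper's proof: both realise $\D^{\bb}_{\rm defect}(-)$ via the Kong--Zhang description as the Verdier quotient $\K^{-,\bb}(\prj -)/\K^{-,\bb}_{\rm t}(\prj -)$, use Proposition \ref{Stder} to see that the derived equivalence restricts to an equivalence $\K^{-,\bb}_{\rm t}(\prj A)\simeq\K^{-,\bb}_{\rm t}(\prj B)$ inside $\K^{-,\bb}(\prj A)\simeq\K^{-,\bb}(\prj B)$, and then pass to Verdier quotients. Your rendering is slightly more explicit about coherence of virtually Gorenstein (artin) algebras and about why the restriction identifies thick subcategories, but the underlying argument is the one in the paper.
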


\begin{proof}
In view of Theorem \ref{KelThm} and Proposition \ref{Stder}, the equivalence $\Phi: \K^-(\Prj A) \lrt \K^-(\Prj B)$ induces equivalences
$$ \K^{-, \bb}(\prj A) \simeq \K^{-, \bb}(\prj B) \ \ \text{and} \ \ \K^{-, \bb}_{\rm t}(\prj A) \simeq \K^{-, \bb}_{\rm t}(\prj B)$$
of  triangulated categories.
Thus we have an equivalence
$$ \K^{-, \bb}(\prj A)/ \K^{-, \bb}_{\rm t}(\prj A) \simeq \K^{-, \bb}(\prj B)/ \K^{-, \bb}_{\rm t}(\prj B)$$
of triangulated quotient categories. That is
$$ \D^{\bb}_{\rm defect}(A) \simeq \D^{\bb}_{\rm defect}(B).$$
\end{proof}

Let $F: \D^{\bb}(\mmod A) \lrt \D^{\bb}(\mmod B)$ be a derived equivalence  with the quasi-inverse $G$. Suppose  that $\T \in \K^{\bb}(\prj A)$, resp. $\T' \in \K^{\bb}(\prj B)$,  is the tilting complex associated to $F$, resp. $G$. By \cite[Lemma 2.1]{HX2}, we assume that $\T$  and $\T'$ are  complexes  of the form
\[ \T: \ \ \ 0 \lrt T^{-n} \lrt T^{-n+1} \lrt \cdots \lrt T^{-1} \lrt T^0 \lrt 0,\]
\[ \T': \ \ \ 0 \lrt T'^0 \lrt T'^1 \lrt \cdots \lrt T'^{n-1} \lrt T'^n \lrt 0 .\]
We fix these notations towards the end of this subsection. \\

We need the following lemma, that is quoted from \cite[Lemma 2.2]{HX2}.
\begin{slemma}\label{HX}
Let $\X$ be a bounded above and $\Y$ be a bounded below complex over a ring $A$. If there is an integer $m$, such that $X^i$ is projective for all $i>m$ and $Y^j=0$ for all $j<m$, then $\Hom_{\D(\Mod A)}(\X, \Y) \cong \Hom_{\K(\Mod A)}(\X,\Y)$.
\end{slemma}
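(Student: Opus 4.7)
The plan is to show the canonical map $\Hom_{\K(\Mod A)}(\X,\Y)\to\Hom_{\D(\Mod A)}(\X,\Y)$ is bijective by comparing $\Y$ with a bounded-below injective resolution and performing an explicit descending null-homotopy construction that exploits the projectivity of $X^i$ for $i>m$, handling the boundary degree $i=m$ (where $X^m$ need not be projective) by a unique-lift argument coming from the quasi-isomorphism.

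First I would choose a quasi-isomorphism $q:\Y\to\mathbf{I}$ with $\mathbf{I}=(I^\bullet,d_I)$ a bounded-below complex of injective $A$-modules satisfying $I^j=0$ for $j<m$, available since $\Y$ itself is bounded below at $m$. Such $\mathbf{I}$ is K-injective, so $\Hom_{\D(\Mod A)}(\X,\Y)\cong\Hom_{\K(\Mod A)}(\X,\mathbf{I})$, and under this identification the map in question becomes $q_*:\Hom_{\K(\Mod A)}(\X,\Y)\to\Hom_{\K(\Mod A)}(\X,\mathbf{I})$ (post-composition with $q$). Setting $\CN=\cone(q)$, which is acyclic, the triangle $\Y\xrightarrow{q}\mathbf{I}\to\CN\rightsquigarrow$ in $\K(\Mod A)$ yields a long exact sequence
\[
\Hom_{\K(\Mod A)}(\X,\CN[-1])\to\Hom_{\K(\Mod A)}(\X,\Y)\to\Hom_{\K(\Mod A)}(\X,\mathbf{I})\to\Hom_{\K(\Mod A)}(\X,\CN),
\]
so it suffices to show the two outer terms vanish. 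The key observation is that $\CN^i=Y^{i+1}\oplus I^i=0$ for $i<m-1$ and that $d_{\CN}^{m-1}\colon Y^m\to Y^{m+1}\oplus I^m$, $y\mapsto(-d_Y y,q(y))$, is injective on $Y^m$: its kernel is $\Ker d_Y^m\cap\Ker q^m$, and since both $\Y$ and $\mathbf{I}$ are supported in degrees $\geq m$ we have $H^m(\Y)=\Ker d_Y^m$ and $H^m(\mathbf{I})=\Ker d_I^m$, so $q$ restricts to the isomorphism $\Ker d_Y^m=H^m(\Y)\xrightarrow{\sim}H^m(\mathbf{I})=\Ker d_I^m$, forcing the intersection to be $0$.

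For $\Hom_{\K(\Mod A)}(\X,\CN)=0$: given a chain map $g:\X\to\CN$, construct a null-homotopy $h^i\colon X^i\to\CN^{i-1}$ by descending induction from the top of $\X$. For $i>m$, projectivity of $X^i$ allows lifting $\tilde g^i:=g^i-h^{i+1}d_X^i$ (which lies in $\Ker d_{\CN}^i=\im d_{\CN}^{i-1}$ by acyclicity) along the surjection $\CN^{i-1}\twoheadrightarrow\im d_{\CN}^{i-1}$. At the boundary $i=m$ the injectivity of $d_{\CN}^{m-1}$ makes $\CN^{m-1}=Y^m\to\im d_{\CN}^{m-1}$ an isomorphism, yielding a \emph{unique} $h^m=(d_{\CN}^{m-1})^{-1}\tilde g^m$ without requiring projectivity of $X^m$. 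For $i\leq m-1$, $\CN^{i-1}=0$ forces $h^i=0$, and the identity $(dh+hd)^{m-1}=g^{m-1}$ holds automatically: computing $d_{\CN}^{m-1}(h^m d_X^{m-1})=\tilde g^m d_X^{m-1}=g^m d_X^{m-1}=d_{\CN}^{m-1}g^{m-1}$ (from the chain-map relation for $g$) and invoking injectivity of $d_{\CN}^{m-1}$ forces $h^m d_X^{m-1}=g^{m-1}$. The vanishing $\Hom_{\K(\Mod A)}(\X,\CN[-1])=\Hom_{\K(\Mod A)}(\X[1],\CN)=0$ is handled by the same descending induction applied to $\X[1]$: the shifted terms $(\X[1])^i=X^{i+1}$ are projective for every $i\geq m$ used in the projective lifting, and the unique-lift step at $i=m$ together with an analogous chain-map calculation closes the base case. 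The main obstacle throughout is the boundary degree $i=m$, where the usual projective-lifting argument fails for lack of projectivity of $X^m$; this is precisely what the injectivity of $d_{\CN}^{m-1}$, itself a consequence of the quasi-isomorphism $q$ and the bounded-below hypothesis on $\Y$, is brought in to resolve.
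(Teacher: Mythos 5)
The paper does not actually prove this lemma: it is quoted verbatim from Hu--Xi \cite[Lemma 2.2]{HX2}, so there is no in-paper proof to compare against. Your argument is a correct, self-contained proof, and the overall strategy — resolve $\Y$ by a bounded-below injective complex $\mathbf{I}$ starting in degree $m$, reduce to showing $\Hom_{\K}(\X,\CN)=0=\Hom_{\K}(\X,\CN[-1])$ for the acyclic cone $\CN$, and kill chain maps by a descending homotopy that uses projectivity of $X^i$ for $i>m$ and the injectivity of $d_{\CN}^{m-1}$ at the boundary — is sound. All the computations you carry out (that $\tilde g^i$ lands in $\Ker d_{\CN}^i=\im d_{\CN}^{i-1}$, the verification of the homotopy identity at degree $m-1$ by cancelling $d_{\CN}^{m-1}$, and the shift to $\X[1]$ for the $\CN[-1]$ term) check out.

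One remark: your derivation of the injectivity of $d_{\CN}^{m-1}$ is more elaborate than necessary. You identify $\Ker d_{\CN}^{m-1}=\Ker d_Y^m\cap\Ker q^m$ and invoke the isomorphism $H^m(q)$. But $\CN$ is acyclic and $\CN^{m-2}=0$, so $\Ker d_{\CN}^{m-1}=H^{m-1}(\CN)=0$ directly — no need to look at $H^m$ or at $q$ beyond knowing it is a quasi-isomorphism. This is worth noting because it makes the key input completely transparent: the only thing your homotopy construction really uses about $\CN$ is that it is an acyclic complex vanishing in degrees $<m-1$; the boundary step at $i=m$ is then handled by the automatic injectivity of the bottom differential of any such complex, and every higher step by projectivity of $X^i$. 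With that streamlining the proof reads as a clean local-to-global null-homotopy argument.
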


\begin{slemma}\label{lem1'}
Let $A$ and $B$ be two  noetherian rings that are ${\rm fGd}$-derived equivalent. Let $F: \D^{\bb}(\mmod A) \lrt \D^{\bb}(\mmod B)$ be an equivalence with the quasi-inverse $G$.
\begin{itemize}

\item[$(i)$] Assume that $X$ is a finitely generated Gorenstein projective $B$-module. Then $G(X)$ is isomorphic in $\D^{\bb}(\mmod A)$ to a complex of the form
\[ {\bf T}_X: \ \ 0 \lrt T_X^{-n} \lrt T_X^{-n+1} \lrt \cdots \lrt T_X^{-1} \lrt T_X^0 \lrt 0\]
where $T_X^{-n}$  is Gorenstein projective and $T_X^i$ is projective,  for $-n+1 \leq i \leq 0.$

\item[$(ii)$] Assume that $X$ is a finitely generated Gorenstein projective $A$-module. Then $F(X)$ is isomorphic in $\D^{\bb}(\mmod B)$ to a complex of the form
\[ {\bf T}'_X: \ \ 0 \lrt {T'}_X^{0} \lrt {T'}_X^{1} \lrt \cdots \lrt {T'}_X^{n-1} \lrt {T'}_X^n \lrt 0\]
where ${T'}_X^0$ is Gorenstein projective and ${T'}_X^i$ is projective, for $1 \leq i \leq n.$
\end{itemize}
\end{slemma}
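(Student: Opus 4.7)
I will prove (i) in four stages; part (ii) then follows by a symmetric argument exchanging $(F,\T')$ with $(G,\T)$. The strategy is: (a) pin down the cohomological support of $G(X)$ using adjunction against $F(A)\cong\T'$; (b) realize $G(X)$ by a complex in $\K^{-,\bb}_{\rm t}(\prj A)$ via the ${\rm fGd}$-hypothesis and Lemma \ref{Kato}; (c) truncate smartly on both ends; (d) promote the surviving left-end module to $\Gprj A$ by a Hom-vanishing argument.

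For (a), adjointness combined with $F(A)\cong\T'$ gives
\[\HT^i(G(X))\cong\Hom_{\D(\mmod A)}(A,G(X)[i])\cong\Hom_{\D(\mmod B)}(\T',X[i]).\]
Since $\T'$ is a bounded complex of projectives it is $\K$-projective, so this equals $\Hom_{\K(\mmod B)}(\T',X[i])$; a chain map from a complex supported in $[0,n]$ to the stalk complex $X[i]$ (living in degree $-i$) must vanish unless $-i\in[0,n]$, giving $\HT^i(G(X))=0$ for $i\notin[-n,0]$. For (b), $X\in\Gprj B\subseteq\D^{\bb}(\mmod B)_{\rm fGd}$ combined with the ${\rm fGd}$-equivalence yields $G(X)\in\D^{\bb}(\mmod A)_{\rm fGd}$, so Lemma \ref{Kato} supplies $\mathbf{Z}\in\K^{-,\bb}_{\rm t}(\prj A)$ with $G(X)\cong\mathbf{Z}$. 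For (c), using $\HT^i(\mathbf{Z})=0$ for $i>0$ and projectivity of the $Z^i$, I iteratively split off the top projective summand (each surjection $Z^{s-1}\twoheadrightarrow Z^s$ of projectives splits), replacing $\mathbf{Z}$ by a homotopy equivalent complex still in $\K^{-,\bb}_{\rm t}(\prj A)$ with $Z^i=0$ for $i>0$. I then define
\[{\bf T}_X:={}_{\subset_{-n}}\mathbf{Z}\colon\ 0\to\Coker(\partial^{-n-1}_{\mathbf{Z}})\to Z^{-n+1}\to\cdots\to Z^0\to 0,\]
which is quasi-isomorphic to $\mathbf{Z}$ because $\HT^i(\mathbf{Z})=0$ for $i<-n$; the middle terms $T_X^i=Z^i$ for $-n+1\leq i\leq 0$ are projective, as required.

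The heart of the proof, and the main obstacle, is stage (d): showing $T_X^{-n}=\Coker(\partial^{-n-1}_{\mathbf{Z}})\in\Gprj A$. The exact sequence $\cdots\to Z^{-n-1}\to Z^{-n}\to T_X^{-n}\to 0$ is a projective resolution whose deep syzygies $\Ker(\partial^{-n-k-1}_{\mathbf{Z}})$ lie in $\Gprj A$ for $k\gg 0$, so $T_X^{-n}$ has finite Gorenstein projective dimension. By the standard criterion that a module of finite Gpd is Gorenstein projective iff $\Ext^{>0}(-,\prj A)=0$, it then suffices to prove $\Ext^j_A(T_X^{-n},Q)=0$ for every $Q\in\prj A$ and $j>0$. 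A direct dimension-shift along the above projective resolution identifies
\[\Ext^j_A(T_X^{-n},Q)\cong\Hom_{\D(\mmod A)}(G(X),Q[n+j])\cong\Hom_{\D(\mmod B)}(X,F(Q)[n+j]).\]
Since $F(Q)\in\add\T'$ is a bounded complex of projective $B$-modules in degrees $[0,n]$, a short induction on its length using the stupid-truncation triangles, combined with the Gorenstein-projective vanishing $\Ext^{>0}_B(X,\prj B)=0$, yields $\Hom_{\D(\mmod B)}(X,F(Q)[k])=0$ for all $k>n$. This produces the required vanishing and completes (i).

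For (ii), the argument dualizes. From $G(B)\cong\T$ one gets $\HT^i(F(X))\cong\Hom_{\D(\mmod A)}(\T,X[i])=0$ for $i\notin[0,n]$. Lemma \ref{Kato} again provides a representative in $\K^{-,\bb}_{\rm t}(\prj B)$; after splitting off projective summands above degree $n$ and applying the good truncation ${}_{\subset_0}$, one obtains ${\bf T}'_X$ supported in $[0,n]$ with projective middle terms. The left-end module ${T'}_X^0$ is promoted to $\Gprj B$ by the same dimension-shift, this time using that $G(Q')\in\add\T\subseteq\K^{[-n,0]}(\prj A)$ to conclude $\Hom_{\D(\mmod A)}(X,G(Q')[j])=0$ for $j>0$.
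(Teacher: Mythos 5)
Your proof is correct, and it takes a genuinely different route from the paper's.

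The paper reaches part~(i) by first invoking Lemma~3.3 of Pan~\cite{P} to obtain a representative of $G(X)$ of the required shape with $T_X^{-n}\in{}^\perp A$ and projective middle terms; it then shows $T_X^{-n}$ has finite Gorenstein projective dimension via a fairly intricate argument in the singularity category (comparing the images of $T_X^{-n}[n]$ and a deep syzygy of a K-projective resolution of $G(X)$ under the quotient functor to $\D^{\bb}_{\sg}(A)$, realising the isomorphism as a roof and chasing two triangles); finally it applies Holm's approximation theorem to write $0\to L\to G\to T_X^{-n}\to 0$ with $G\in\Gprj A$ and $L$ of finite projective dimension, and uses $T_X^{-n}\in{}^\perp A$ to split this sequence. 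You instead build the complex from scratch: you pin down the cohomological support $[-n,0]$ by adjunction against $F(A)\cong\T'$, obtain a representative in $\K^{-,\bb}_{\rm t}(\prj A)$ via the ${\rm fGd}$-hypothesis and Lemma~\ref{Kato}, then split off the top projective summands and apply good truncation. For the key Gorenstein-projectivity of $T_X^{-n}$, you derive finite Gorenstein projective dimension directly from the deep syzygies of $\mathbf{Z}$, and then obtain the stronger vanishing $\Ext^{>0}_A(T_X^{-n},\prj A)=0$ (rather than just $\Ext^1(T_X^{-n},A)=0$) by a clean dimension shift $\Ext^j_A(T_X^{-n},Q)\cong\Hom_{\D(\mmod A)}(G(X),Q[n+j])\cong\Hom_{\D(\mmod B)}(X,F(Q)[n+j])$, killing the right-hand side by a stupid-truncation induction on a bounded complex of projectives in degrees $[0,n]$ together with $\Ext^{>0}_B(X,\prj B)=0$. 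Both approaches ultimately rest on Holm's theory, but yours avoids both the external reference to Pan's lemma and the roof-chasing in $\D^{\bb}_{\sg}(A)$, and is the more self-contained of the two. The only point worth spelling out when writing this up is the Ext-characterisation of Gorenstein projectivity for modules of finite Gorenstein projective dimension (Holm's functorial description, \cite[Thm.~2.20]{Ho}), which you invoke as a ``standard criterion''; with that cited, the argument is complete.
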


\begin{proof}
We just prove statement $(i)$. Statement $(ii)$ follows similarly.
In view of Lemma 3.3 of \cite{P}, $G(X)$ is isomorphic in $\D^{\bb}(\mmod A)$ to a complex of the form
\[ 0 \lrt T_X^{-n} \lrt T_X^{-n+1} \lrt \cdots \lrt T_X^0 \lrt 0\]
where $T_X^{-n} \in {}^\perp A$ and for $-n+1 \leq i \leq 0$, $T_X^i \in \prj A$.

On the other hand, the commutative diagram
\[\xymatrix{\D^{\bb}_{\rm fGd}(\mmod B) \ar@{^{(}->}[r] \ar[d]^{G|} & \D^{\bb}(\mmod B) \ar[d]^{G} \\ \D^{\bb}_{\rm fGd}(\mmod A) \ar@{^{(}->}[r]  & \D^{\bb}(\mmod A) } \]
and Proposition 2.10 of \cite{Ka} imply that $G(X)$ is quasi-isomorphic to a bounded complex of finitely generated Gorenstein projective $A$-modules.

Take a K-projective resolution $\PP$ of $G(X)$. By induction on the length of $G(X)$, one can see that $\PP$ can be chosen in $\K^{-, \bb}_{\rm t}(\prj A)$. So there is an integer $m=m(\PP)$ such that ${\rm H}^i(\PP)=0$ and $\Ker \partial_{\PP}^i \in \Gprj A$ for all $i\leq m$. Consider the images of $G(X)$ and $\PP$  in $\D_{\rm sg}^{\bb}( A)$ under the quotient functor $Q: \D^{\bb}(\mmod A) \lrt \D^{\bb}_{\sg}(A)$. Then there is an isomorphism $T_X^{-n}[n] \cong \Ker \partial_{\PP}^m[m]$ in $\D_{\sg}^{\bb}(A)$. If we set $Y:= \Ker \partial_{\PP}^m[m]$,
there exists a roof $fs^{-1} : Y \lrt T_X^{-n}[n]$ in which both $f$ and $s$ can be completed into triangles in $D^{\bb}(\mmod A)$ such that their cones belong to $\K^{\bb}(\prj A)$. More precisely, there is a complex ${\bf Z} \in \D^{\bb}(\mmod A)$ and triangles
\[ {\bf Z} \st{s}\rt Y \rt \W \rightsquigarrow,\]
\[{\bf Z} \st{f}\rt T_X^{-n}[n] \rt \W' \rightsquigarrow,\]
such that $\W$ and $\W'$ belong to $\K^{\bb}(\prj A)$. Let $\PP_Y$, resp. ${\bf Q}$, $\PP_{\bf Z}$, be a K-projective resolution of $Y$, resp. $T_X^{-n}[n]$, ${\bf Z}$. So the above triangles can be written of the form
\[ \PP_{\bf Z} \st{s}\rt \PP_Y \rt \W \rightsquigarrow,\]
\[\PP_{\bf Z} \st{f}\rt {\bf Q} \rt \W' \rightsquigarrow.\]
Since $\W \in \K^{\bb}(\prj A)$, the first triangle implies that there is an integer $d$, such that $\Omega^j \PP_{\bf Z} \in \Gprj A$, for $j<d$.

Now, the same argument, applying this time to ${\bf Q}$ in the second triangle, implies that $T_X^{-n}$ has finite Gorenstein projective dimension. Therefore, by \cite[Theorem 2.10]{Ho} there is a short exact sequence
$0 \lrt L \lrt G \lrt T_X^{-n} \lrt 0$, where $G \in \Gprj A$ and $L$ has finite projective dimension. The fact that $T_X^{-n} \in {}^\perp A$, implies that this short exact sequence splits and so $T_X^{-n}$ lies in $\Gprj A$.
\end{proof}

\begin{slemma}\label{lem2'}
  Using the same notations as in  Lemma \ref{lem1'}, for each pair $X, Y \in \Gprj B$ and $i \neq 0$, we have $\Hom_{\K^{\bb}(\mmod A)}({\bf T}_X, {\bf T}_Y[i])=0$.
\end{slemma}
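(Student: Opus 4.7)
My plan is to handle the two signs of $i$ separately, since the structure of ${\bf T}_X, {\bf T}_Y$ (projective in degrees $-n+1,\dots,0$ and Gorenstein projective at the bottom) makes one direction immediate from Lemma \ref{HX} and the other direction more delicate.

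For $i<0$, I would apply Lemma \ref{HX} to $\X={\bf T}_X$ and $\Y={\bf T}_Y[i]$ with $m=-n$. The projectivity hypothesis ``$X^j$ projective for $j>m$'' holds by construction of ${\bf T}_X$. For the vanishing hypothesis, note $({\bf T}_Y[i])^j=T_Y^{j+i}$, which is zero unless $-n\le j+i\le 0$; since $i<0$, $j<-n$ forces $j+i<-n-|i|<-n$ and hence $T_Y^{j+i}=0$. Lemma \ref{HX} therefore yields
\[
\Hom_{\K^{\bb}(\mmod A)}({\bf T}_X,{\bf T}_Y[i])\cong \Hom_{\D^{\bb}(\mmod A)}({\bf T}_X,{\bf T}_Y[i]).
\]
Since ${\bf T}_X\cong G(X)$ and ${\bf T}_Y\cong G(Y)$ in $\D^{\bb}(\mmod A)$ by Lemma \ref{lem1'}, and $G$ is a triangle equivalence, the right-hand side is isomorphic to $\Hom_{\D^{\bb}(\mmod B)}(X,Y[i])=\Ext_B^i(X,Y)$, which vanishes for $i<0$ since $X$ and $Y$ are genuine modules.

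For $i>0$ the direct application of Lemma \ref{HX} fails (one has $T_Y^{j+i}\neq 0$ for some $j<-n$), and $\Ext_B^i(X,Y)$ need not vanish, so $\Hom_\D$ is not the right target. Instead I would decompose ${\bf T}_X$ via the degreewise-split short exact sequence $0\to \sigma^{\geq -n+1}{\bf T}_X\to {\bf T}_X\to S_X\to 0$ of complexes, where $S_X:=T_X^{-n}[n]$ is the stalk of the Gorenstein projective bottom term; this produces a distinguished triangle $P_X\to {\bf T}_X\to S_X\rightsquigarrow$ in $\K^{\bb}(\mmod A)$ with $P_X:=\sigma^{\geq -n+1}{\bf T}_X\in \K^{\bb}(\prj A)$. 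Applying the analogous decomposition to ${\bf T}_Y$ and running $\Hom_{\K}(-,{\bf T}_Y[i])$ on both triangles reduces the vanishing of $\Hom_\K({\bf T}_X,{\bf T}_Y[i])$ to the vanishing of the four ``corner'' groups $\Hom_\K(S_X,S_Y[i])$, $\Hom_\K(S_X,P_Y[i])$, $\Hom_\K(P_X,S_Y[i])$, $\Hom_\K(P_X,P_Y[i])$. The stalk-to-stalk corner vanishes because $S_X$ and $S_Y[i]$ are stalks in different cohomological degrees ($-n$ and $-n-i$); the corner $\Hom_\K(P_X,P_Y[i])$ equals $\Hom_\D(P_X,P_Y[i])$ by K-projectivity of $P_X$, and is controlled via the equivalence $G$ together with the fact that $P_X$ is a perfect complex. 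The mixed corners are handled using Ext-vanishing $\Ext_A^k(T_X^{-n},P)=0$ for $P\in\prj A$ and $k>0$ (since $T_X^{-n}$ is Gorenstein projective), which allows one to identify the relevant $\Hom_\K$ with the cohomology of an honest Hom-complex and then check vanishing in the right degree using Lemma \ref{HX} applied one more time in the appropriate direction.

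The main obstacle, as I see it, lies in showing the vanishing of the two mixed corners $\Hom_\K(S_X,P_Y[i])$ and $\Hom_\K(P_X,S_Y[i])$. Neither side is directly K-projective for the ``stalk'' factor, so one must exploit the specific arithmetic of the degree ranges: $P_Y[i]$ sits in degrees $[-n+1-i,-i]$, which lies strictly below the stalk degree $-n$ of $S_X$ once $i>0$, allowing one to rerun Lemma \ref{HX} after shifting, using that $T_X^{-n}\in\Gprj A$ has $\Ext_A^k(T_X^{-n},-)$ vanishing on projectives. Dually, for $\Hom_\K(P_X,S_Y[i])=\Hom_\D(P_X,T_Y^{-n}[n+i])$, the K-projectivity of $P_X$ reduces the computation to $\Ext^{n+i}$-like terms into the Gorenstein projective $T_Y^{-n}$, and one uses that $P_X\in\K^{\bb}(\prj A)$ has cohomology supported in $[-n+1,0]$ to force this Ext to vanish for the range of $i$ in question.
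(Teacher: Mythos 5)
Your treatment of $i<0$ is correct and essentially the same as the paper's: Lemma \ref{HX} reduces $\Hom_{\K}$ to $\Hom_{\D}$, and the latter is identified with $\Ext_B^i(X,Y)=0$ via the equivalence $G$. The gap is in the case $i>0$, where your ``four corners'' strategy does not go through as stated.

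First, a concrete arithmetic error: you assert that for $i>0$ the complex $P_Y[i]$ ``lies strictly below the stalk degree $-n$ of $S_X$.'' In fact $P_Y[i]$ is supported in degrees $[-n+1-i,\,-i]$, so its top degree $-i$ is $\geq -n$ for every $1\le i\le n$, and its support overlaps degree $-n$ exactly in that range. The claimed separation holds only for $i>n$, so the mixed corner $\Hom_{\K}(S_X,P_Y[i])$ is not dispatched by this degree argument in the range $1\le i\le n$, which is precisely the nontrivial range.

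Second, and more fundamentally, the corner $\Hom_{\K}(P_X,P_Y[i])$ is not shown to vanish, and there is no reason it should for $1\le i\le n$. By K-projectivity of $P_X$ it equals $\Hom_{\D}(P_X,P_Y[i])$, but $P_X=\bar{\T}_X$ and $P_Y=\bar{\T}_Y$ are truncations of $G(X)$ and $G(Y)$ that generally have cohomology spread over several degrees, so the claim that this group is ``controlled via the equivalence $G$'' does not follow. Your reduction requires \emph{all four} corners to vanish, and this one does not obviously do so; the five-lemma style argument then collapses. The paper (following Pan, {\rm [P, Lemma 3.7]}) circumvents exactly this: rather than computing $\Hom_{\K}(\bar{\T}_X,\bar{\T}_Y[i])$ in isolation, it first transports $\bar{\T}_Y$ through the equivalence $F$, observes that $F(\bar{\T}_Y)$ has cohomology concentrated in degrees $\le 1$, and then compares the long exact sequences obtained from $\Hom_{\K}(-,\bar{\T}_Y[i])$ and $\Hom_{\D}(-,\bar{\T}_Y[i])$ on the first triangle together with $\Hom_{\K}(\T_X,-)$ on the second. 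This comparison is what forces the vanishing for $i>1$. The case $i=1$ is genuinely different and the paper handles it separately by proving that the connecting map $\Hom_{\K}(\T_X,T_Y^{-n}[n])\to \Hom_{\K}(\T_X,\bar{\T}_Y[1])$ is surjective; your proposal, treating all $i>0$ uniformly, misses this distinction.
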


\begin{proof}
The result can be obtained from a simple modification of the proof of \cite[Lemma 3.7]{P}. For the convenience of the reader, we include the sketch of the proof here.

Lemma \ref{HX} implies that $\Hom_{\K^{\bb}(\mmod A)} (\T_X, \T_Y [i])=0$, for $i<0$.
For $i>0$, consider the following triangles in $\K^{\bb}(\mmod A)$
\[ \bar{\T}_X \lrt \T_X \lrt T_X^{-n}[n] \lrt \bar{\T}_X[1], \]
\[ \bar{\T}_Y \lrt \T_Y \lrt T_Y^{-n}[n] \lrt \bar{\T}_Y[1],\]
where $\bar{\T}_X$, resp.  $\bar{\T}_Y$, denotes the complex ${}_{\sqsubset_{-n+1}} \T_X$, resp. ${}_{\sqsubset_{-n+1}} \T_Y$.

The second triangle implies that ${\rm H}^i(F(\bar{\T}_Y))=0$, for all $i>1$. So $F(\bar{\T}_Y)$ is isomorphic in $\D^{\bb}(\mmod B)$ to a complex $\BQ$ of the form
\[ \cdots \lrt  Q^{-1} \lrt Q^0 \lrt Q^1 \lrt 0 .\]

Now, apply the cohomological functors $\Hom_{\K(\mmod A)}(-, \bar{\T}_Y[i])$ and $\Hom_{\D(\mmod A)}(- , \bar{\T}_Y[i])$ on the first triangle and the homological functor $\Hom_{\K(\mmod A)}(\T_X , -)$ on the second one. One can deduce that $\Hom_{\K(\mmod A)} (\T_X, \T_Y[i])=0$ for $i>1$.

To prove that $\Hom_{\K^{\bb}(\mmod A)}(T_X, T_Y[1])=0$, it is enough to show that the induced map
\[ \Hom_{\K^{\bb}(\mmod A)}(\T_X, T_Y^{-n}[n]) \lrt \Hom_{\K^{\bb}(\mmod A)}(\T_X, \bar{\T}_Y[1])\]
is surjective. Consider the commutative diagram
\[\xymatrix@C-0.5pc@R-0.5pc{ \BQ \ar[r]^{\al} \ar[d]^{\cong} & Y \ar[r] \ar[d]^{\cong} & {\rm cone}(\al) \ar[r] \ar[d]^{\cong} & \BQ[1] \ar[d]^{\cong}\\
F(\bar{\T}_Y ) \ar[r] & F(\T_Y) \ar[r] & F(T_Y^{-n}[n]) \ar[r] & F(\bar{\T}_Y[1]).}\]
Apply  Lemma \ref{HX} to get the following commutative diagram whose vertical maps are isomorphisms
\[\xymatrix@C-0.5pc@R-0.5pc{ \Hom_{\K^{\bb}(\mmod A)}(\T_X, T_Y^{-n}[n]) \ar[r] \ar[d]^{\cong} & \Hom_{\K^{\bb}(\mmod A)}( \T_X, \bar{\T}_Y[1]) \ar[d]^{\cong}\\
\Hom_{\D^{\bb}(\mmod B)}(X, {\rm cone}(\al)) \ar[r] & \Hom_{\D^{\bb}(\mmod B)}(X, \BQ[1]).}\]

So, it is enough to prove the following isomorphisms
\[ \Hom_{\D^{\bb}(\mmod B)} (X, \BQ[1]) \cong \Hom_{\K^{\bb}(\mmod B)}(X, \BQ[1]), \] \[\Hom_{\D^{\bb}(\mmod B)}(X, {\rm Cone}(\al)) \cong \Hom_{\K^{\bb}(\mmod B)}(X, {\rm Cone}(\al)).\]
The first isomorphism can be proved by induction on the length of $\BQ$, and the second one is obtained by applying the homological functors $\Hom_{\D^{\bb}(\mmod B)}(X, -)$ and $\Hom_{\K^{\bb}(\mmod B)}(X,-)$ on the triangle
$$ {\rm cone}(\al)^{0} \lrt {\rm cone}(\al) \lrt {\rm cone}(\al) {}_{{}_{-1} \sqsupset} \lrt {\rm cone}(\al)^{0} [1] $$
in $\K^{\bb}(\mmod B)$. Note that in the above triangle, ${\rm cone}(\al)^{0}$ denotes the stalk complex with the zeroth term of the complex ${\rm cone}(\al)$ in degree zero.
\end{proof}

\begin{slemma}\label{lem3'}
Let $A$ be a ring. Then there exists an equivalence
$$ \K^{\bb}(\Gprj A) \simeq \K^{\bb}(\prj (\Gprj A))$$
of triangulated categories.
\end{slemma}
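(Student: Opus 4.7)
The plan is to reduce the lemma to a term-wise application of the Yoneda embedding. Set $\CS = \Gprj A$ and regard it as a skeletally small additive category. The key point is that $\CS$ is closed under finite direct sums and has the property that every idempotent splits: finite direct sums of finitely generated Gorenstein projectives are Gorenstein projective, and since $A$ is noetherian (as in the standing convention of this subsection), a direct summand of a finitely generated Gorenstein projective module is again finitely generated and Gorenstein projective. So the hypotheses of the remark made in the preliminaries on functor categories apply to $\CS = \Gprj A$.

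Next I would invoke the Yoneda functor
\[ Y \colon \Gprj A \lrt \prj(\Gprj A), \qquad G \mapsto \Hom_{\Gprj A}(-, G). \]
Yoneda's lemma shows $Y$ is fully faithful and that its image lies in the representable (hence finitely generated projective) objects of $\Mod(\Gprj A)$. Because $\Gprj A$ is closed under finite direct sums and splitting of idempotents, the discussion recalled before (``every finitely presented projective $\CS$-module is of the form $\Hom_{\CS}(-, S)$'') shows that every object of $\prj(\Gprj A)$ lies, up to isomorphism, in the image of $Y$. Hence $Y$ is an equivalence of additive categories.

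Finally, any equivalence of additive categories $F \colon \CC \st{\sim}\lrt \CCD$ extends term-wise to an equivalence on complexes which respects homotopies, and therefore induces a triangle equivalence $\K^{\bb}(\CC) \simeq \K^{\bb}(\CCD)$. Applying this to $F = Y$ yields the desired equivalence
\[ \K^{\bb}(\Gprj A) \simeq \K^{\bb}(\prj(\Gprj A)). \]

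The only delicate point in this outline is the verification that $\Gprj A$ is idempotent-complete in the finitely generated setting; everything else is a direct application of Yoneda together with the functoriality of the homotopy category construction. Over a noetherian ring this closure under summands is standard (e.g.\ via the Ext-characterization of Gorenstein projectives together with the fact that a summand of a finitely generated module is finitely generated), so no serious obstacle is expected.
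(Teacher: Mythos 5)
Your argument is essentially identical to the paper's: both send $G\in\Gprj A$ to the representable functor, use the Yoneda lemma together with the observation that every finitely generated projective $\CS$-module (for $\CS=\Gprj A$, which is additive and idempotent-complete) is representable, and then extend the resulting additive equivalence term-wise to $\K^{\bb}$. Your explicit verification of closure under summands is a welcome bit of extra care but does not change the route taken.
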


\begin{proof}
Consider the functor
$$\varphi: \Gprj A \lrt \prj \FMod(\Gprj A)$$
given by $\varphi(X)=\Hom_A(-, X)$. Every object of $\prj \FMod (\Gprj A)$ is of the form $\Hom_A(-,X)$ for some $X \in \Gprj A$. So, by using Yoneda lemma one can easily deduce that $\varphi$ is an equivalence. This equivalence can be extended, in a natural way, to the desired equivalence
$$\bar{\varphi}: \K^{\bb}(\Gprj A) \st{\sim}\lrt \K^{\bb}(\prj (\Gprj A))$$
of triangulated categories.
\end{proof}

\begin{stheorem}\label{GorversDer}
Let $A$ and $B$ be two  noetherian rings that are ${\rm fGd}$-derived equivalent. Then there is an equivalence $\D(\Mod \Gprj A) \simeq \D (\Mod\Gprj B)$ of triangulated categories.
\end{stheorem}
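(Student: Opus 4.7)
The plan is to apply Theorem \ref{KelThm} with $\CS := \Gprj A$, regarded as a skeletally small category (see Remark \ref{rem}), and a set $\CT \subseteq \K(\Mod \Gprj A)$ chosen so that $\CT \simeq \Gprj B$ as categories. Concretely, for each $X \in \Gprj B$ let $\mathbf{T}_X \in \K^{\bb}(\Gprj A)$ be the complex supplied by Lemma \ref{lem1'}(i), satisfying $G(X) \cong \mathbf{T}_X$ in $\D^{\bb}(\mmod A)$ and of uniform length $n$. Let $\bar{\varphi}\colon \K^{\bb}(\Gprj A) \xrightarrow{\sim} \K^{\bb}(\prj(\Gprj A)) \hookrightarrow \K(\Mod \Gprj A)$ be the equivalence of Lemma \ref{lem3'} composed with the canonical embedding (which is fully faithful since bounded complexes of projective functors are K-projective), and set
\[
  \CT := \{\bar{\varphi}(\mathbf{T}_X) : X \in \Gprj B\}.
\]

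Verifying (P1)--(P3) and $\CT \simeq \Gprj B$ is routine from the established machinery: (P3) is the uniform length bound; (P1) uses that each $\bar{\varphi}(\mathbf{T}_X)$ is a bounded complex of representable, hence compact, projective functors, so $\Hom$ out of it commutes with coproducts; (P2) follows from Lemma \ref{lem2'} together with $\bar{\varphi}$ being fully faithful and the identity $\Hom_{\K^{\bb}(\Gprj A)} = \Hom_{\K^{\bb}(\mmod A)}$ on complexes with all terms in $\Gprj A$. Combining Lemma \ref{lem2'} in degree zero with Lemma \ref{HX} (applied at $m = -n$) and the equivalence $F$ yields
\[
  \Hom_{\CT}(\bar{\varphi}(\mathbf{T}_X), \bar{\varphi}(\mathbf{T}_{X'})) \cong \Hom_B(X, X'),
\]
so $\CT \simeq \Gprj B$ and consequently $\D(\Mod \CT) \simeq \D(\Mod \Gprj B)$.

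The crucial remaining hypothesis of Theorem \ref{KelThm} is that every $\CT$-acyclic complex in $\K(\Mod \Gprj A)$ is acyclic, the converse being automatic since $\bar{\varphi}(\mathbf{T}_X)$ is K-projective. Equivalently, $\Loc(\CT) = \D(\Mod \Gprj A)$, and since the representables $\{\Hom_A(-, Y) : Y \in \Gprj A\}$ form a compact generating set, it suffices to show $\Hom_A(-, Y) \in {\rm thick}(\CT)$ for every $Y \in \Gprj A$. Applying Lemma \ref{lem1'}(ii) to $Y$ produces $F(Y) \cong \mathbf{T}'_Y$, a bounded complex with \emph{all} terms in $\Gprj B$ (using $\prj B \subseteq \Gprj B$). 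The brutal-truncation tower of $\mathbf{T}'_Y$ exhibits it in $\D^{\bb}(\mmod B)$ as an iterated cone of the stalk complexes $T'^i_Y[-i]$, and applying $G$ gives an iterated cone in $\D^{\bb}(\mmod A)$ whose successive cones are $\mathbf{T}_{T'^i_Y}[-i]$ and whose total object is $G(\mathbf{T}'_Y) \cong Y$. Lifting this tower inductively to $\K^{\bb}(\Gprj A)$ -- where the Hom comparisons in Lemmas \ref{lem2'} and \ref{HX} make each connecting map liftable -- produces a complex $\mathbf{U} \in {\rm thick}(\{\mathbf{T}_X\}) \subseteq \K^{\bb}(\Gprj A)$ with $\bar{\varphi}(\mathbf{U}) \cong \Hom_A(-, Y)$ in $\D(\Mod \Gprj A)$, placing $\Hom_A(-, Y)$ in ${\rm thick}(\CT)$.

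The main obstacle is precisely this lifting step: the connecting morphisms of the tower live a priori only in $\D^{\bb}(\mmod A)$, and their promotion to $\K^{\bb}(\Gprj A)$ must be argued inductively using the Hom identifications from Lemmas \ref{lem2'} and \ref{HX}. Once this is in place, Theorem \ref{KelThm} delivers $\D(\Mod \Gprj A) \simeq \D(\Mod \CT) \simeq \D(\Mod \Gprj B)$, as required.
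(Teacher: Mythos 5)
Your overall framing matches the paper exactly: set $\CS = \Gprj A$, define $\CT$ (equivalently, the subcategory $\mathbb{E}$ of $\K^{\bb}(\Gprj A)$) via $G$ and Lemma \ref{lem1'}, verify $({\rm P}1)$--$({\rm P}3)$ and $\CT \simeq \Gprj B$ using Lemmas \ref{lem2'}, \ref{HX}, \ref{lem3'}, and feed the whole thing into Theorem \ref{KelThm}. The divergence -- and the gap -- is entirely in how you establish that $\mathbb{E}$ generates $\K^{\bb}(\Gprj A)$, i.e.\ that every stalk complex $Y$ with $Y \in \Gprj A$ lies in ${\rm thick}(\mathbb{E})$.

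You propose to apply $G$ to the brutal-truncation tower of $F(Y) \cong \mathbf{T}'_Y$ and then ``lift'' the resulting tower from $\D^{\bb}(\mmod A)$ to $\K^{\bb}(\Gprj A)$. You flag this lifting as the main obstacle, and it is a genuine one: the connecting morphisms live in $\D^{\bb}(\mmod A)$, and there is no identification of $\Hom_{\D^{\bb}(\mmod A)}$ with $\Hom_{\K^{\bb}(\Gprj A)}$ available here. Lemma \ref{HX} only applies when the target vanishes below the degree where the source's terms become projective; Lemma \ref{lem2'} is a vanishing statement in nonzero degrees, not a $\D$-vs-$\K$ comparison. For the iterated cones $G(V_k)$ in your tower neither lemma puts the two Hom-groups in bijection, so the maps need not lift, and if they do lift the lift is not unique, so the tower need not reassemble coherently. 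The step as written does not go through, and no route to closing it using only the ingredients in the paper is apparent.

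The paper's proof avoids lifting morphisms entirely by routing the generation argument through the singularity category $\D^{\bb}_{\sg}(A)$ together with the full and faithful functor $H\colon \underline{\Gprj}A \hookrightarrow \D^{\bb}_{\sg}(A)$. From the triangle ${}_{\sqsubset_1}F(Y) \to F(Y) \to T'^0_Y \rightsquigarrow$ one deduces $Y \cong G(T'^0_Y) \cong G^{-n}[n]$ in $\D^{\bb}_{\sg}(A)$. The complete projective resolution of $Y$ then gives $Y' := \Ker \partial^n \in \Gprj A$ with $Y' \cong Y[n]$ in $\D^{\bb}_{\sg}(A)$; running the same $G(T'^0_{\cdot})$ analysis for $Y'$ produces $G \in \Gprj A$ with $G[n] \cong Y'$ in $\D^{\bb}_{\sg}(A)$ and, crucially, $G[n] \in {\rm thick}(\mathbb{E})$ via a brutal-truncation triangle $\BQ \to G(X') \to G[n] \rightsquigarrow$ that exists honestly in $\K^{\bb}(\Gprj A)$. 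Chaining the isomorphisms gives $G \cong Y$ in $\D^{\bb}_{\sg}(A)$; full faithfulness of $H$ upgrades this to $G \cong Y$ in $\underline{\Gprj}A$, hence $G \oplus P \cong Y \oplus Q$ in $\Gprj A$ for projectives $P, Q$. Since $G$, $P$, $Q$ all lie in ${\rm thick}(\mathbb{E})$ and thick subcategories are closed under direct summands, $Y \in {\rm thick}(\mathbb{E})$. This is precisely the manoeuvre you need to replace the lifting step with; without it your argument has a hole in its central claim.
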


\begin{proof}
Let $F: \D^{\bb}(\mmod A) \lrt \D^{\bb}(\mmod B)$ be the derived equivalence with the quasi-inverse $G$. Let  $\mathbb{E}$ be the set of all $ G(X)$, where $X$ runs through isomorphism classes of finitely generated Gorenstein projective $B$-modules.  We claim that $\mathbb{E}$ has the following properties:
\begin{itemize}
\item[$(i)$] $\Hom_{\K^{\bb}(\Gprj A)}(E, E' [i])=0$, for all $E, E' \in \mathbb{E}$ and $i \neq 0$.
\item[$(ii)$] $\mathbb{E}$ generates $\K^{\bb}(\Gprj A)$ as a triangulated category.
\item[$(iii)$] $\mathbb{E}$ is equivalent to $\Gprj B$.
\end{itemize}
By Lemma \ref{lem1'}, $\mathbb{E}$ is a subcategory of $\K^{\bb}(\Gprj A)$. Also, Lemma \ref{lem2'} implies that condition $(i)$ holds true. Moreover, two categories $\mathbb{E}$ and $\Gprj B$ are equivalent via $G$. Indeed, let $X$ and $Y$ be modules in $\Gprj B$. Then there exist the following isomorphisms
\[\begin{array}{ll}
\Hom_B(X, Y) & \cong \Hom_{\D(\mmod A)}(G(X), G(Y))\\
& \cong \Hom_{\K(\mmod A)}(G(X), G(Y)).
\end{array}\]
Note that the last isomorphism follows from Lemma \ref{HX}. This means that the functor $G: \Gprj B \lrt \mathbb{E}$ is full and faithful. Also, clearly $G$ is dense and hence is an equivalence.

Therefore, we just need to prove that $\mathbb{E}$ generates $\K^{\bb}(\Gprj A)$ as a triangulated category.

First note that, since $G$ is a derived equivalence,  $\add G(B)$ generates $\K^{\bb}(\prj A)$ as a triangulated category. Now, assume that $Y$ is a finitely generated Gorenstein projective $A$-module that is not projective. By Lemma \ref{lem1'}, $F(Y)$ is isomorphic in $\D^{\bb}(\mmod B)$ to a complex
$$ 0 \lrt T'^0_Y \lrt \cdots \lrt T'^n_Y \lrt 0$$
with $T'^0_Y$ Gorenstein projective and $T'^i_Y$ projective for every $1 \leq i \leq n$.
Consider the triangle ${}_{\sqsubset_1} F(Y) \rt F(Y) \rt T'^0_Y \rightsquigarrow$ and apply the functor $G$ on it, to get the triangle
$$ (*) \ \ G({}_{\sqsubset_1} F(Y)) \rt GF(Y) \rt G(T'^0_Y) \rightsquigarrow.$$
Since ${}_{\sqsubset_1} F(Y)$ is a bounded complex of finitely generated projective $A$-modules, $G({}_{\sqsubset_1} F(Y))$ is also a bounded complex of finitely generated projective $B$-modules. Moreover, by Lemma \ref{lem1'}, $G(T'^0_Y)$ is isomorphic in $\D^{\bb}(\mmod A)$ to a complex
$$0 \lrt G^{-n} \lrt P^{-n+1} \lrt \cdots \lrt P^0 \lrt 0$$
with $G^{-n} \in \Gprj A$ and $P^i \in \prj A$ for all $-n+1 \leq i \leq 0$.
Now, consider the image of triangle $(*)$ in $\D^{\bb}_{\sg}(A)$. We have an isomorphism $ G(T'^0_Y)\cong G^{-n}[n] \cong Y = GF(Y)$ in $\D^{\bb}_{\sg}(A)$.

On the other hand, since $Y$ is Gorenstein projective, there is a complete projective resolution
\[ \xymatrix{ \cdots \ar[r] &Q^{-n} \ar[r]^{\partial^{-n}} &  \cdots \ar[r] & Q^{-1} \ar[rr]^{\partial^{-1}} \ar[rd] && Q^0 \ar[r] & \cdots \ar[r] & Q^n \ar[r]^{\partial^n} & \cdots \\ &&& & Y \ar[ru] &&&}\]
of $Y$ with $Q^i \in \prj A$. Set $Y' := \Ker \partial^n$. Since $Y' \in \Gprj A$, one may apply the same argument as above, to $Y'$, to get  that there exists an $A$-module $X' \in \Gprj B$ such that $G(X') \cong Y'$ in $\D_{\sg}^{\bb}(A)$. Moreover, $G(X')$ is isomorphic in $D_{\sg}^{\bb}(A)$ to a complex $G[n]$ for some finitely generated Gorenstein projective $A$-module $G$. Also, there is a triangle
$$ Q \rt G(X') \rt G[n] \rightsquigarrow$$
in $\K^{\bb}(\Gprj A)$, where $Q$ is a bounded complex of projectives. This implies that $G[n]$ belongs to ${\rm thick}(\BE)$.

Consider the triangle
$$Y \rt \BQ' \rt Y'[-n] \rightsquigarrow,$$
where $\BQ'$ is the truncated complex
$$0 \lrt Q^0 \lrt \cdots \lrt Q^{n-1} \lrt 0.$$
Thus, $Y' \cong Y[n]$ in $\D^{\bb}_{\sg}(A)$.
Therefore,  there is an isomorphism $G[n] \cong Y[n]$, and then $G \cong Y$, in $\D_{\sg}^{\bb}(A)$. The full and faithful functor $H: \underline{\Gprj}A \lrt \D_{\sg}^{\bb}(A)$ implies that $G \cong Y$ in $\underline{\Gprj}A$. So there exist projective modules $P$ and $Q$ such that $G \oplus P \cong Y \oplus Q$ in $\Gprj A$. Now, since $G$, $P$ and $Q$ all belong to ${\rm thick} (\mathbb{E})$, $Y$ also belongs to ${\rm thick}(\mathbb{E})$.  This finishes the proof of the claim.

By Lemma \ref{lem3'}, there is an equivalence $\bar{\varphi}: \K^{\bb}(\Gprj A) \lrt \K^{\bb}(\prj (\Gprj A))$. Let $\CT$ be the subcategory of $\K^{\bb}(\prj (\Gprj A))$ that  corresponds to $\mathbb{E}$ via $\bar{\varphi}$. The properties of $\mathbb{E}$, mentioned above, imply that $\CT$ satisfies all conditions $({\rm P}1)$, $({\rm P}2)$ and $({\rm P}3)$ of \ref{Properties}. Moreover, property $(ii)$ of $\mathbb{E}$ implies that a complex $\X$ in $\K(\Mod \Gprj A)$ is acyclic if and only if it is $\CT$-acyclic. So $\D_{\CT}(\Mod (\Gprj A)) \simeq \D(\Mod (\Gprj A))$.

Now Theorem \ref{KelThm} comes to play to give  an equivalence
$\D(\Mod \CT)\simeq \D(\Mod (\Gprj A))$ of triangulated categories. Hence, by property $(iii)$ of $\mathbb{E}$, we get that $\D(\Mod (\Gprj B)) \simeq \D(\Mod (\Gprj A))$ as triangulated categories. The proof is hence complete.
\end{proof}

\sss\label{AHV} Let $R$ be a ring. By \cite[Theorem 3.3]{AHV}, for a ring $R$, we have an equivalence
$$ \D_{\Gp}^{\bb}(\Mod R) \simeq \K^{-, \Gp \bb} (\Add (\Gprj R)),$$
where $\K^{-, \Gp \bb} (\Add (\Gprj R))$ is the subcategory of $\K^{-} (\Add (\Gprj R))$ consisting of all complexes  $\X$ in which there exists an integer $n=n(\X)$ such that $\HT^i(\Hom(G,{\bf X}))=0$ for all $i \leq n$ and all $G\in \Gprj R$.

\begin{scorollary}\label{DerverGder}
Let $A$ and $B$ be noetherian rings that are ${\rm fGd}$-derived equivalent. Then they are Gorenstein derived equivalent, i.e.
$$ \D^{\bb}_{\Gp}(\Mod A) \simeq \D^{\bb}_{\Gp}(\Mod B).$$
\end{scorollary}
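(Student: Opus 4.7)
The plan is to recognize that the bounded Gorenstein derived category $\D^{\bb}_{\Gp}(\Mod R)$ is, in disguise, a bounded derived category of functors on $\Gprj R$, so that the equivalence promised by Theorem \ref{GorversDer} can be applied directly after passing to bounded levels.

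First I would use the equivalence of \ref{AHV}, namely
\[\D^{\bb}_{\Gp}(\Mod R) \simeq \K^{-,\Gp\bb}(\Add(\Gprj R)),\]
and combine it with the Yoneda embedding $Y: \Gprj R \hookrightarrow \Mod \Gprj R$, $G \mapsto \Hom_{\Gprj R}(-,G)$. Extending $Y$ additively and to coproducts identifies $\Add(\Gprj R)$ with $\Prj(\Mod \Gprj R)$, which in turn gives an equivalence $\K^-(\Add(\Gprj R)) \simeq \K^-(\Prj(\Mod \Gprj R)) \simeq \D^-(\Mod \Gprj R)$. Under this identification, the defining condition for the subcategory $\K^{-,\Gp\bb}$---that $\HT^i(\Hom_R(G,\X))=0$ for all $G\in \Gprj R$ and $i \ll 0$---translates into the condition that the cohomology of the image of $\X$ in $\D^-(\Mod \Gprj R)$ vanishes in sufficiently low degrees, i.e.\ that the image lies in $\D^{\bb}(\Mod \Gprj R)$. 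Consequently
\[\D^{\bb}_{\Gp}(\Mod R) \simeq \D^{\bb}(\Mod \Gprj R).\]

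Next I would invoke Theorem \ref{GorversDer}, which produces an equivalence $\D(\Mod \Gprj A) \simeq \D(\Mod \Gprj B)$. The proof of that theorem realizes this equivalence through Theorem \ref{KelThm}, and the very diagram displayed there restricts the equivalence to bounded derived categories: in particular one obtains $\D^{\bb}(\Mod \Gprj A) \simeq \D^{\bb}(\Mod \Gprj B)$. Stringing the three equivalences together yields
\[\D^{\bb}_{\Gp}(\Mod A) \simeq \D^{\bb}(\Mod \Gprj A) \simeq \D^{\bb}(\Mod \Gprj B) \simeq \D^{\bb}_{\Gp}(\Mod B),\]
as required.

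The step most likely to cause friction is the translation from $\K^{-,\Gp\bb}(\Add(\Gprj R))$ to $\D^{\bb}(\Mod \Gprj R)$. One must check that under Yoneda the functor $\Hom_R(G,-)$ evaluated on $\X \in \K^-(\Add(\Gprj R))$ agrees, cohomologically, with the cohomology at $G$ of the corresponding complex of projective $\Mod \Gprj R$-modules---this is immediate from Yoneda, but it is exactly where the subtlety of the $\Gp$-boundedness condition gets converted into ordinary boundedness of cohomology in the functor category. Once this identification is in place, the rest of the argument is a formal concatenation of equivalences supplied by earlier results in the paper.
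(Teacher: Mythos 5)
Your proposal is correct and follows essentially the same route as the paper: both arguments concatenate the equivalence from \ref{AHV} with the Yoneda-type identification $\Add(\Gprj R) \simeq \Prj(\Mod \Gprj R)$ (the argument of Lemma \ref{lem3'}, which turns the $\Gp$-boundedness condition into ordinary cohomological boundedness in the functor category) and then invoke the bounded-level restriction of the equivalence supplied by Theorem \ref{GorversDer} via the diagram in Theorem \ref{KelThm}. Your explicit remark that the bounded equivalence $\D^{\bb}(\Mod \Gprj A) \simeq \D^{\bb}(\Mod \Gprj B)$ must be extracted from the proof of Theorem \ref{GorversDer} rather than its statement is a welcome clarification of a point the paper leaves implicit.
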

\begin{proof}
By Theorem \ref{GorversDer}, there is an equivalence
$$ \D^{\bb}(\FMod(\Gprj A)) \st{\sim} \lrt \D^{\bb}(\FMod(\Gprj B))$$
of triangulated categories.

Also, the same argument as in the proof of Lemma \ref{lem3'} works to prove equivalences
$$ \K^{-, \Gp \bb}(\Add (\Gprj A)) \simeq \K^{-, \bb}(\Prj (\Gprj A)) \ \text{and} \ \K^{-, \Gp \bb}(\Add (\Gprj B)) \simeq \K^{-, \bb}(\Prj (\Gprj B))$$
of triangulated categories.
Now, the result follows from \ref{AHV} and the known triangulated  equivalences $\D^{\bb}(\FMod (\Gprj A)) \simeq \K^{-,\bb}(\Prj (\Gprj A))$ and $\D^{\bb}(\FMod (\Gprj B)) \simeq \K^{-,\bb}(\Prj (\Gprj B))$
\end{proof}

\begin{scorollary}
Let $\La$ and $\La'$ be finite dimensional algebras that are derived equivalent. Then there exists an equivalence
$$ \D^{\bb}_{\Gp}(\Mod \La) \simeq \D^{\bb}_{\Gp}(\Mod \La')$$
of triangulated categories.
\end{scorollary}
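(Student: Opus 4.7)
The plan is to deduce the statement from Corollary~\ref{DerverGder} by showing that any derived equivalence between the finite dimensional algebras $\La$ and $\La'$ is, in fact, an ${\rm fGd}$-derived equivalence in the sense of Definition~\ref{fGd}. Since finite dimensional algebras are (two-sided) noetherian, Corollary~\ref{DerverGder} will then immediately deliver the desired equivalence $\D^{\bb}_{\Gp}(\Mod \La) \simeq \D^{\bb}_{\Gp}(\Mod \La')$.

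To verify the ${\rm fGd}$-condition I would invoke Kato's result cited just before Proposition~\ref{Stder}: a derived equivalence $F: \D^{\bb}(\mmod \La) \lrt \D^{\bb}(\mmod \La')$ between finite dimensional $k$-algebras restricts to an equivalence between the subcategories of complexes of finite Gorenstein projective dimension, and the relevant square commutes by construction. Alternatively, one can argue directly using the machinery already in the paper: by Lemma~\ref{Kato}, a complex of $\D^{\bb}(\mmod \La)$ lies in $\D^{\bb}(\mmod \La)_{\rm fGd}$ exactly when it is isomorphic to an object of $\K^{\bb}(\Gprj \La)$, and Lemma~\ref{lem1'} (applied to both $F$ and its quasi-inverse $G$) shows that $F$ and $G$ send bounded complexes of finitely generated Gorenstein projectives to complexes of the same type. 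The finite dimensionality enters precisely here, via the existence of a tilting complex of the shape $\T$ with projective terms and length bounded in one direction, which is what powers Lemma~\ref{lem1'}.

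Having established that $F$ is an ${\rm fGd}$-derived equivalence, Corollary~\ref{DerverGder} applies and yields the equivalence $\D^{\bb}_{\Gp}(\Mod \La) \simeq \D^{\bb}_{\Gp}(\Mod \La')$, concluding the argument. The heavy lifting --- constructing the set $\mathbb{E}$ of images of Gorenstein projectives, checking properties $({\rm P}1)$--$({\rm P}3)$, and invoking Theorem~\ref{KelThm} to get $\D(\Mod \Gprj \La)\simeq \D(\Mod \Gprj \La')$ before passing to the Gorenstein derived category via \ref{AHV} --- has already been done in Theorem~\ref{GorversDer} and Corollary~\ref{DerverGder}, so nothing further is required. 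The only mildly delicate point is to ensure that the equivalence of fGd-subcategories is actually the restriction of the chosen $F$ (as opposed to some independently-produced equivalence), but this is exactly the content of Kato's statement in the finite dimensional case, so there is no real obstacle.
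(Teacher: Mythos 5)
Your main argument matches the paper exactly: the paper's proof simply cites Theorem~4.2 of \cite{Ka} to conclude that derived equivalent finite dimensional algebras are ${\rm fGd}$-derived equivalent and then invokes Corollary~\ref{DerverGder}, which is precisely your primary route. One caveat on the ``alternative'' route you sketch: Lemma~\ref{lem1'} is stated, and its proof actually uses, the standing hypothesis that $A$ and $B$ are \emph{already} ${\rm fGd}$-derived equivalent (the commutative fGd square is fed into \cite[Proposition~2.10]{Ka} inside the proof), so appealing to Lemma~\ref{lem1'} to \emph{establish} the ${\rm fGd}$-condition would be circular. In the finite dimensional case the ${\rm fGd}$-property must come from Kato's theorem itself, as your primary argument does, not from the paper's internal lemmas.
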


\begin{proof}
 Theorem 4.2 of \cite{Ka} guarantees that $\La$ and $\La'$ are ${\rm fGd}$-derived equivalence. Now, Corollary \ref{DerverGder} implies the result.
\end{proof}

\subsection{Infinitely generated $n$-tilting modules}
Infinitely generated tilting modules arise naturally to extend some aspects of the classical tilting theory to infinitely generated modules over arbitrary rings. The first instance of a generalization of Brenner and Butler's theorem for infinitely generated tilting modules was  given by Facchini  \cite{F1} and \cite{F2}.

Recently, infinitely generated tilting modules over arbitrary rings has received considerable attention towards knowing derived categories and equivalences of general rings, see \cite{B, BMT, NS}. Also, Chen and Xi \cite{CX} studied a relation between two concepts of good tilting modules and recollements of derived categories of rings.

Let $A$ be a ring. A (right) $A$-module $T$ (possibly infinitely generated) is called $n$-tilting if the following conditions hold true
\begin{itemize}
\item[$(i)$] ${\rm pd}_A T\leq n$;
\item[$(ii)$] for all $i>0$ and every cardinal $\al$, $\Ext_A^i(T, T^{(\al)})=0$;
\item[$(iii)$] there exists an exact sequence
$$ 0 \lrt A \lrt T^0 \lrt T^1 \lrt \cdots \lrt T^m \lrt 0,$$
with $T^i \in \Add T$ for $0 \leq i \leq m$.
\end{itemize}

An $n$-tilting module $T$ is called a good $n$-tilting, if there exists an exact sequence
$$ 0 \lrt A \lrt T^0 \lrt T^1 \lrt \cdots \lrt T^n \lrt 0,$$
where for every $i$, $T^i$ is isomorphic to a direct summand of finite direct sums of copies of $T$.

Bazzoni et al. \cite{BMT} proved the following important results on good $n$-titling modules.

\begin{sproposition}\label{1.4}\cite[Proposition 1.4]{BMT}
Let $T_A$ be a good $n$-tilting module and $B= \End_A(T)^{\op}$. Then
\begin{itemize}
\item[$(i)$] $ T_B$ admits a projective resolution
$$ 0 \lrt Q_n \lrt \cdots \lrt Q_0 \lrt  T_B \lrt 0,$$
in which for every $0 \leq i \leq n$, $Q_i$ is finitely generated projective  $B$-module;
\item[$(ii)$] $\Ext^i_B(T,T)=0$ for all $i >0$;
\item[$(iii)$] there exists a ring isomorphism $\End_B(T) \cong A$.
\end{itemize}
\end{sproposition}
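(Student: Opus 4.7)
The plan is to dualize the defining exact sequence of the good $n$-tilting module $T$ twice, first via $\Hom_A(-,T)$ and then via $\Hom_B(-,T)$, and to read off the three claims directly from the resulting complexes.

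For part $(i)$, I would apply $\Hom_A(-,T)$ to the exact sequence
$$0 \lrt A \lrt T^0 \lrt T^1 \lrt \cdots \lrt T^n \lrt 0,$$
in which each $T^j$ is a direct summand of a finite direct sum of copies of $T$. Since $T^j \in \Add T$ and $\Ext^i_A(T,T^{(\al)})=0$ for all $i>0$, one has $\Ext^i_A(T^j,T)=0$ for all $i>0$. Splicing the long sequence into short exact pieces and iterating the long exact sequence of $\Ext_A(-,T)$ therefore yields an exact sequence
$$0 \lrt Q_n \lrt \cdots \lrt Q_0 \lrt T \lrt 0$$
of right $B$-modules, where $Q_j := \Hom_A(T^{n-j},T)$. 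Each $Q_j$ is a direct summand of $\Hom_A(T^{m_j},T) \cong B^{m_j}$ for some $m_j$, hence finitely generated projective.

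For parts $(ii)$ and $(iii)$, which I would treat together, consider the evaluation map $\varepsilon_X : X \rt \Hom_B(\Hom_A(X,T),T)$, natural in $X \in \Mod A$. Since $\varepsilon_T$ is just the identity $T \rt \Hom_B(B,T) = T$, additivity and naturality of $\varepsilon$ force $\varepsilon_{T^j}$ to be an isomorphism for every $T^j$ in the tilting sequence. Applying $\Hom_B(-,T)$ to the projective resolution of part $(i)$ and inserting these identifications transforms the resulting cochain complex into the truncated original sequence
$$0 \lrt T^0 \lrt T^1 \lrt \cdots \lrt T^n \lrt 0,$$
whose cohomology is $A$ in degree zero and vanishes in higher degrees. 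This simultaneously yields $\Ext^i_B(T,T)=0$ for $i>0$ and an isomorphism $\End_B(T) \cong A$ of abelian groups.

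The main obstacle is upgrading the group isomorphism of part $(iii)$ to a ring isomorphism, which amounts to verifying that $\varepsilon$ is compatible with composition in $\End_B(T)$ and with the multiplication of $A$; this compatibility is formal from naturality of $\varepsilon$, but requires careful tracking of the $(A,B)$-bimodule structure on $T$. Everything else reduces to the Ext-vanishing provided by the tilting hypotheses together with the self-duality of $\add T$ under $\Hom_A(-,T)$ and $\Hom_B(-,T)$.
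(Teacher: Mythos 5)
This proposition is quoted verbatim from \cite[Proposition 1.4]{BMT}; the paper does not supply a proof, so there is no in-paper argument to compare against. Taken on its own merits, your sketch is essentially the standard proof and is correct. For part $(i)$, splitting the coresolution of $A$ into short exact sequences and using $\Ext^{\geq 1}_A(T^j,T)=0$ (which follows from $T^j\in\add T$ and $\Ext^{\geq 1}_A(T,T)=0$) indeed gives exactness of the dualized sequence, and each $\Hom_A(T^j,T)$ is a direct summand of $\Hom_A(T,T)^{m_j}\cong B^{m_j}$, hence finitely generated projective over $B$. Note a small indexing slip: after applying the contravariant $\Hom_A(-,T)$ one gets $Q_j=\Hom_A(T^j,T)$, not $\Hom_A(T^{n-j},T)$; the sequence already reverses, so no extra flip is needed. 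For parts $(ii)$--$(iii)$, your observation that the evaluation $\varepsilon_T\colon T\to\Hom_B(\Hom_A(T,T),T)$ is the identity, and hence $\varepsilon_{T^j}$ is an isomorphism for every $T^j\in\add T$ by additivity and naturality, is exactly the needed input: applying $\Hom_B(-,T)$ to the deleted resolution and invoking these isomorphisms reproduces the complex $T^0\to\cdots\to T^n$, whose cohomology is $A$ in degree $0$ and vanishes above. The ring structure on the degree-$0$ isomorphism is the content of $\varepsilon_A$, which is precisely the structure map $A\to\End_B(T)$ coming from the bimodule structure on $T$; as you say, compatibility with multiplication is formal once the module conventions (right $A$ versus right $B$, and the $\op$'s) are lined up carefully.
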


We use this result to prove that every good $n$-tilting module $T_A$ provide an equivalence between the derived category $\D(\Mod A)$ and the relative derived category $\D_{T}( \Mod \End_A(T)^{\op})$.

\begin{stheorem}\label{inftilt}
Assume that $T$ is a good $n$-tilting module and $B= \End_A(T)^{\op}$. Then there exists an equivalence
$$ \D(\Mod A ) \simeq \D_{T}(\Mod B),$$
where $\D_{T}(\Mod B) := \D_{\add T}(\Mod B)$.
\end{stheorem}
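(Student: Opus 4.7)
The plan is to apply Theorem \ref{relativeq} with $\CS = B$ (viewed as a one-object small category) and with $\CT$ taken to be (a skeleton of) $\add T\subseteq \Mod B$, each module regarded as a stalk complex in $\K(\Mod B)$ concentrated in degree zero. The resulting $\D(\Mod \CT)$ will then be identified with $\D(\Mod A)$ using the endomorphism description in Proposition \ref{1.4}.

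First I verify that $\CT$ satisfies $({\rm P}1)$, $({\rm P}2)$ and $({\rm P}3)$ of \ref{Properties}. Properties $({\rm P}2)$ and $({\rm P}3)$ are immediate from the stalk nature of the chosen complexes: for $T',T''\in \add T$ and $i\neq 0$, any chain map from the stalk in degree $0$ to the stalk in degree $-i$ is zero, so $({\rm P}2)$ holds; and each element of $\CT$ is concentrated in degree $0$, so $({\rm P}3)$ holds with $n=0$. For $({\rm P}1)$, Proposition \ref{1.4}(i) gives a finite resolution of $T_B$ by finitely generated projective $B$-modules, so $T_B$ is finitely presented over $B$; consequently $\Hom_B(T,-)$ commutes with arbitrary coproducts, and since the $\K$-theoretic Hom between stalk complexes coincides with the module-level Hom, $({\rm P}1)$ follows.

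With the setup in place, Theorem \ref{relativeq} yields the triangle equivalence
\[
\D(\Mod \CT)\;\simeq\;\D_{\CT}(\Mod B)\;=\;\D_{T}(\Mod B).
\]
It remains to identify $\Mod \CT$ with $\Mod A$. By Proposition \ref{1.4}(iii), $\End_B(T)\cong A$, and the functor $\Hom_B(T,-)\colon \add T\to \prj A$ sends $T$ to $A_A$ (and hence each direct summand of $T^m$ to a direct summand of $A^m$); this is an additive equivalence, with quasi-inverse $-\otimes_A T$. Precomposition therefore gives $\Mod \CT \simeq \Mod(\prj A)$, and the standard evaluation-at-$A$ equivalence $\Mod(\prj A)\simeq \Mod A$ completes the identification. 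Deriving this abelian equivalence and composing with the previous display yields the desired $\D(\Mod A)\simeq \D_{T}(\Mod B)$.

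The only ingredient beyond Theorem \ref{relativeq} is Proposition \ref{1.4}: part (i) is what transfers finite presentation from the possibly infinitely generated $T_A$ to $T_B$ and is precisely what $({\rm P}1)$ requires, while part (iii) is what forces the target functor category to be $\Mod A$ rather than some larger functor category. Beyond these two inputs the proof is formal, so I do not anticipate a substantive obstacle.
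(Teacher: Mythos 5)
Your proposal is formally correct, but it takes a genuinely different route from the paper's, and the difference is worth spelling out because it touches the meaning of the theorem.

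The paper does \emph{not} take $\CT$ to be $\add T$ viewed as stalk complexes. Instead, using Proposition~\ref{1.4}(i), it takes the truncated finite projective resolution
$\BT\colon 0\to Q_n\to\cdots\to Q_0\to 0$
of $T_B$ and sets $\CT=\{\BT\}$, a single compact object of $\K(\Mod B)$. Because $\BT$ is a bounded complex of finitely generated projectives, one has $\Hom_{\K(\Mod B)}(\BT,\Y[i])\cong\Hom_{\D(\Mod B)}(T,\Y[i])=\Ext^i_B(T,\Y)$ for every $\Y$, so $\{\BT\}$-acyclicity is exactly vanishing of the derived $\Hom$ against $T$. With this choice, condition $(\mathrm{P}2)$ is not a bookkeeping triviality: it becomes $\Ext^i_B(T,T)=0$ for $i\neq 0$, which is precisely what Proposition~\ref{1.4}(ii) supplies. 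The paper then identifies $\Mod\CT$ with $\Mod\End_{\K(\Mod B)}(\BT)=\Mod\End_B(T)=\Mod A$ and invokes Lemma~\ref{lem6} (plus the preceding equivalences) to conclude.

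Your version replaces $\BT$ by the stalk complex $T$ concentrated in degree zero. All three properties $(\mathrm{P}1)$--$(\mathrm{P}3)$ do hold for this $\CT$, and the endomorphism identification with $\prj A$ goes through, so the chain of equivalences you write down is valid and it literally matches the notation $\D_{\add T}(\Mod B)$ in the theorem statement. But note two consequences. First, $\Hom_{\K(\Mod B)}(T,\Y[i])$ for a non-projective stalk $T$ is \emph{not} $\Ext^i_B(T,\Y)$; it is a quotient of $\Hom_B(T,Z^i(\Y))$. Thus the ``$\add T$-acyclic'' complexes in your version are detected by chain-level, not derived, $\Hom$-vanishing, and the Verdier quotient you compute is a priori different from the one the paper's proof produces, even though both are abstractly equivalent to $\D(\Mod A)$. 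Second — and this is the tell — your argument never uses Proposition~\ref{1.4}(ii). You only needed $T_B$ finitely generated and $\End_B(T)\cong A$, so your proof would go through for any such $T_B$, tilting or not. That the tilting hypothesis evaporated should be taken as a signal that the paper intends $\CT=\{\BT\}$, in which case $\Ext$-vanishing is exactly what verifies $(\mathrm{P}2)$ and what makes the relative derived category meaningful as a tilting-theoretic object. To bridge the two, you would need to compare $\D_{\add T}(\Mod B)$ (stalk) with $\D_{\{\BT\}}(\Mod B)$, i.e.\ to compare ${\rm Loc}(T)$ with ${\rm Loc}(\BT)$ inside $\K(\Mod B)$, which is not automatic; the paper sidesteps this by working with $\BT$ from the start.
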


\begin{proof}
In view of Proposition \ref{1.4}, there exists an exact sequence
$$0 \lrt Q_n \lrt \cdots \lrt Q_0 \lrt T \lrt 0,$$
with $Q_i$ finitely generated projective $B$-modules,for every $0 \leq i \leq n$.
Let $\BT$ be the truncated complex
$$ 0 \lrt Q_n \lrt \cdots \lrt Q_0 \lrt 0$$
of left finitely generated $B$-modules.

It can be easily checked, using Proposition \ref{1.4}, that $\CT= \{ \BT \}$ satisfies Conditions $({\rm P}1)$, $({\rm P}2)$ and $({\rm P}3)$ of \ref{Properties}. So there is a triangulated equivalence
$$ \Phi: \K \K_{\FT}(\Mod B) \lrt \K_{\CT}(\Mod B).$$

Now, observe that \ $\K\K_{\FT}(B \mbox{-} {\rm Mod})$ is equivalent to \ $\K_{\rm prj}(\Mod \End({\bf T}))$ via the functor $\Hom_{\K\K(\Mod B)}(\BT, -)$.
Also, Lemma \ref{lem6} implies that
$\K_{\CT}(\Mod B ) \simeq \D_{\CT}(\Mod B).$
Therefore, we have an equivalence
$$ \D(\Mod \End_{B}(\BT)) \st{\sim} \lrt \D_{\CT}(\Mod B)$$
of triangulated categories.

Since $\End(\BT) \cong \End_B(T)$ in $\D(\Mod B)$ and $\End_B(T) \cong A$ by Proposition \ref{1.4}, there is the desired equivalence
$$\D(\Mod A) \simeq \D_{T}(\Mod B)$$
of triangulated categories.
\end{proof}

\section{Existence of recollements}
Our aim in this section is use techniques of the previous section to provide sufficient conditions for the existence of recollements of derived categories of functor categories.

Let $\CA$ be an abelian category and $\Prj \CA$ be the class of projective objects in $\CA$. A complex $\T \in \K^{\bb}(\Prj \CA)$ is called a partial tilting complex if it satisfies the following two properties.
\begin{itemize}
\item[$(i)$] $\Hom_{\K^{\bb}(\Prj \CA)}(\T, \T[i])=0$, for all $i \neq 0$.
\item[$(ii)$] If $\{\T_i\}_{i \in I}$ is an index family of copies of $\T$, then
$$  \Hom_{\K^{\bb}(\Prj \CA)}(\T, \oplus_{i \in I}\T_i) \cong \oplus_{i\in I} \Hom_{\K^{\bb}(\Prj \CA)}(\T, \T_i). $$
\end{itemize}

For any $\X \in \D^-(\CA)$, $\X ^\perp$ is a full triangulated subcategory of $\D^-(\CA)$ formed by all complexes $\Y \in \D^-(\CA)$ in which $\Hom_{\D^-(\CA)}(\X, \Y[i])=0$, for all $i \in \Z$.
Let $\CT$ be a subcategory of $\D^-(\CA)$. We set $\CT^\perp = \bigcap_{\X \in \CT} \X^\perp$.

Let $A$, $B$ and $C$ be rings that are associative with identity. Koenig proved that derived category $\D^-(\Mod A)$ of $A$ admits a recollement
\vspace{0.3cm}
\[\xymatrix@C=0.5cm{ \D^-(\Mod B) \ar[rrr]  &&& \D^-(\Mod A) \ar[rrr] \ar@/^1.5pc/[lll] \ar@/_1.5pc/[lll] &&& \D^-(\Mod C)\ar@/^1.5pc/[lll] \ar@/_1.5pc/[lll] }\]
\vspace{0.3cm}
\\relative to $\D^-(\Mod B)$ and $\D^-(\Mod C)$ if and only if there exist partial tilting complexes $\FB \in \K^{\bb}(\Prj A)$ and $\FC \in \K^{\bb}(\prj A)$ satisfying the following properties
\begin{itemize}
\item[$(i)$] $\End_A(\FB)\cong B$,
\item[$(ii)$] $\End_A(\FC)\cong C$,
\item[$(iii)$] $\Hom_{\D^-(\Mod A)}(\FC, \FB[i])=0$, for all $i\in \Z$,
\item[$(iv)$] $\FB^\perp \cap \FC^\perp ={0}$.
\end{itemize}

In the following, we obtain a sufficient conditions for the existence of recollements of functor categories. It also provide a different proof for the one direction of Koenig's result stated above.

\begin{theorem}\label{recollement}
Let $\CS$ be a small category. Let $\FB$, resp. $\FC$, be a full triangulated small subcategory of $\K^{\bb}(\Prj \CS)$, resp. $\K^{\bb}(\prj \CS)$, satisfying  conditions $({\rm P}1)$, $({\rm P}2)$ and $({\rm P}3)$, defined in \ref{Properties}.
Assume further that
\begin{itemize}
\item[$({\rm P}4)$] $\Hom_{\D(\Mod \CS)}(T , T'[i])=0$, for every $T$ in $\FC$ and $T'$ in $\FB$, and
 \item[$({\rm P}5)$] $\FB^\perp \bigcap\FC^\perp=0$.
 \end{itemize}
 Then there exists the following recollement
 \vspace{0.3cm}
 \[\xymatrix@C=0.5cm{ \D(\Mod \FB) \ar[rrr]  &&& \D(\Mod \CS) \ar[rrr] \ar@/^1.5pc/[lll] \ar@/_1.5pc/[lll] &&& \D(\Mod \FC)\ar@/^1.5pc/[lll] \ar@/_1.5pc/[lll] }\]
 \vspace{0.2cm}
\end{theorem}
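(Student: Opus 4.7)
My plan is to construct the recollement by producing both left and right adjoints to the inclusion $\Loc_{\D(\Mod \CS)}(\FB) \hookrightarrow \D(\Mod \CS)$, and then invoking Proposition \ref{Miyachi} via the two resulting stable $t$-structures. Set
\[ \CV := \Loc_{\D(\Mod \CS)}(\FB), \qquad \CW := \Loc_{\D(\Mod \CS)}(\FC). \]
A direct derived-category translation of the arguments of Section 3 (using that bounded complexes of projectives are K-projective, together with Remark \ref{3.5(3)}) yields equivalences $\CV \simeq \D(\Mod \FB)$ and $\CW \simeq \D(\Mod \FC)$. Moreover, since $\FC \subseteq \K^{\bb}(\prj \CS)$, its objects are compact in $\D(\Mod \CS)$, so $\CW$ is a compactly generated localizing subcategory.

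The key analytic step is to establish $\CV = \CW^\perp$. The inclusion $\CV \subseteq \CW^\perp$ is the content of (P4), after passing to localizing closures in both variables. For the reverse, let $X \in \CW^\perp$. Brown representability for the well-generated category $\CV$ (well-generated because $\CV = \Loc(\FB)$ and the set $\FB$ satisfies the self-compactness property (P1)) provides a Bousfield triangle
\[ V \lrt X \lrt X' \rightsquigarrow \]
with $V \in \CV$ and $X' \in \CV^\perp$. Since $V \in \CV \subseteq \CW^\perp$ and $X \in \CW^\perp$, one gets $X' \in \CW^\perp$. Hence $X' \in \CV^\perp \cap \CW^\perp = \FB^\perp \cap \FC^\perp$, which vanishes by (P5); so $X \cong V \in \CV$.

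With this in hand, the compact generation of $\CW$ together with Neeman's smashing-localization theorem produces a recollement
\[ \CW \lrt \D(\Mod \CS) \lrt \D(\Mod \CS)/\CW \simeq \CW^\perp = \CV, \]
whose right-adjoint piece $j_*: \CV \simeq \CW^\perp \hookrightarrow \D(\Mod \CS)$ is exactly the inclusion of $\CV$, which therefore comes equipped with a left adjoint $j^*$. Combined with a right adjoint supplied by Brown representability for $\CV$, the inclusion $\CV \hookrightarrow \D(\Mod \CS)$ admits both adjoints. Proposition \ref{Miyachi} applied to the resulting stable $t$-structures $({}^\perp \CV, \CV)$ and $(\CV, \CV^\perp)$ produces a recollement $\CV \to \D(\Mod \CS) \to \D(\Mod \CS)/\CV \simeq \CW$, which under the identifications $\CV \simeq \D(\Mod \FB)$ and $\CW \simeq \D(\Mod \FC)$ is the asserted recollement. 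The main obstacle is the orthogonality equality $\CV = \CW^\perp$: the reverse containment really requires Brown representability for $\CV$, which is delicate because the objects of $\FB$ lie in $\K^{\bb}(\Prj \CS)$ rather than $\K^{\bb}(\prj \CS)$ and so are only self-compact (via (P1)) rather than compact in the ambient $\D(\Mod \CS)$, coupled with the non-degeneracy furnished by (P5).
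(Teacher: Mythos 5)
Your proof is correct and takes essentially the same route as the paper's. Both arguments identify $\CV = \Loc\,\FB$ and $\CW = \Loc\,\FC$ (with the equivalences to $\D(\Mod \FB)$ and $\D(\Mod \FC)$ coming from Remark \ref{3.5(3)} together with Proposition \ref{Phi-equi} and Lemma \ref{lem6}), establish the central orthogonality fact $\Loc\,\FB = (\Loc\,\FC)^\perp$ using (P4) for one inclusion and a Bousfield-triangle argument plus (P5) to kill the residual term for the other, and then invoke Proposition \ref{Miyachi}. The only difference is organizational: the paper constructs the two stable $t$-structures $(\Loc\FC,\Loc\FB)$ and $(\Loc\FB,(\Loc\FB)^\perp)$ directly by chaining two Bousfield decompositions, whereas you phrase the same computation as the inclusion of $\CV$ admitting both a left adjoint (from the smashing localization at the compactly generated $\CW$) and a right adjoint (from Brown representability for the self-compactly generated $\CV$). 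Your flagging of the subtle point that $\FB$-objects are compact only inside $\Loc\FB$ via (P1), not in the ambient $\D(\Mod\CS)$, correctly parallels the paper's reliance on Remark \ref{3.5(3)} and Neeman's adjoint-functor theorem.
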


\begin{proof}
By Proposition \ref{Phi-equi}, there are the following triangulated functors
$$ \Phi_{\FB} : \K\K_{\mathfrak X}(\Mod \CS) \lrt \K(\Mod \CS),$$
$$ \Phi_{\FC}: \K\K_{\mathfrak Y}(\Mod \CS) \lrt \K(\Mod \CS),$$
where ${\mathfrak X}$, resp.\ ${\mathfrak Y}$, denotes the class of objects of $\K\K(\Mod \CS)$ formed by all shifting of stalk complexes ${\bf B}$, resp.\ ${\bf C}$, with $B$, resp.\ $C$, in degree zero, for every object $B\in \FB$, resp.\ $C \in \FC$.
Set $\CU = \im \Phi_{\FC}$, $\CV= \im \Phi_{\FB}$ and $\CW= (\im \Phi_{\FB})^\perp$.
We show that $(\CU,\CV)$ and $(\CV, \CW)$ are stable $t$-structures in $\D(\Mod \CS)$. In view of Proposition \ref{Phi-equi},  $\im \Phi_{\FC}= {\rm Loc} \FC$ and $ \im \Phi_{\FB}= {\rm Loc} \FB$.
Since $\FC \subset \K^{\bb}(\prj \CS)$, every object of $\FC$ is compact. By property $({\rm P}4)$, $\Hom_{\D(\Mod \CS)}(C, B[i])=0$, for every $C \in \FC$ and every $B \in \FB$. Now, the argument as in the proof of Lemma \ref{Xj} shows that $\Hom_{\D(\Mod \CS)}(C, {\bf B}[i])=0$ for every $C \in \FC$ and every ${\bf B} \in {\rm Loc}\FB$. Now, using this argument again, one can deduce that $\Hom_{\D(\Mod \CS)}({\bf C}, {\bf B}[i])=0$, for every ${\bf C} \in {\rm Loc}\FC$, ${\bf B}\in {\rm Loc} {\FB}$ and all $i\in \Z$.

Now, suppose that $\X$ is an object of $\D(\Mod \CS)$. By the proof of Lemma \ref{lem6}, there exists a triangle
$$ \X' \rt \X \rt \X'' \rightsquigarrow,$$
where $\X' \in \CU$ and $\X'' \in \CU^\perp$.
Moreover,  in a similar way, there exists  a triangle
$$ \Y' \rt \X'' \rt \Y'' \rightsquigarrow,$$
in which $\Y' \in \CV$ and $\Y'' \in \CV^\perp$. Assume that $C \in \CV$ and  apply the homological functor $\Hom(C,-)$ to the above triangle, to get that $\Hom(C, \Y''[i])=0$ for all $i \in \Z$. Condition $({\rm P}5)$ yields that $\Y''$ vanishes in $\D(\Mod \CS)$. So $\X''$ is isomorphic to $\Y'$ in $\D(\Mod \CS)$ and then belongs to $\CV$.

Therefore $(\CU, \CV)$ and $(\CV, \CW)$ are  stable $t$-structures in $\D(\Mod \CS)$.
Hence, by Proposition \ref{Miyachi}, we have the following recollement
\[\xymatrix@C=0.5cm{\CV \ar[rrr]^{i_*}  &&& \D(\Mod \CS) \ar[rrr]^{j^*} \ar@/^1.5pc/[lll]_{i^!} \ar@/_1.5pc/[lll]_{i^*} &&& \D(\Mod \CS)/\CV, \ar@/^1.5pc/[lll]_{j_*} \ar@/_1.5pc/[lll]_{j_!} }\]
\vspace{0.3cm}
\\such that $\im j_!=\CU$ and $\im j_*= \CW$. By Remark \ref{rem1}, $\CV$ is equivalent to $\D(\Mod \FB)$ and $\CU$ is equivalent to $\D(\Mod \FC)$. Furthermore, definition of recollement implies that $j_!$ induces an equivalence $\CU \simeq \D(\Mod \CS) / \CV$. Hence, there is an equivalence
$$ \D(\Mod \CS) / \CV \simeq \D(\Mod \FC)$$
of triangulated categories. Consequently, we have the desired recollement.
\end{proof}

\begin{proposition}\label{resRec}
With the assumptions as in the above theorem, we have a recollement
\vspace{0.35cm}
 \[\xymatrix@C=0.5cm{ \D^-(\Mod \FB) \ar[rrr]  &&& \D^-(\Mod \CS) \ar[rrr] \ar@/^1.5pc/[lll] \ar@/_1.5pc/[lll] &&& \D^-(\Mod \FC).\ar@/^1.5pc/[lll] \ar@/_1.5pc/[lll] }\]
 \vspace{0.2cm}
 \\Moreover, the inclusion functor $\iota: \D^-(\Mod \CS) \lrt \D(\Mod \CS)$ is a morphism between the above recollement and the recollement in Theorem \ref{recollement}, i.e. there is the following commutative diagram of recollements
 \vspace{0.35cm}
\[\xymatrix@C=0.5cm@R=0.7cm { \D(\Mod \FB) \ar[rrr] \ar@{<-_)}[dd]  &&& \D(\Mod \CS) \ar[rrr] \ar@{<-_)}[dd] \ar@/^1.5pc/[lll] \ar@/_1.5pc/[lll] &&& \D(\Mod \FC) \ar@/^1.5pc/[lll] \ar@/_1.5pc/[lll] \ar@{<-_)}[dd]  \\ \\ \D^-(\Mod \FB) \ar[rrr]  &&& \D^-(\Mod \CS) \ar[rrr] \ar@/^1.5pc/[lll] \ar@/_1.5pc/[lll] &&& \D^-(\Mod \FC)\ar@/^1.5pc/[lll] \ar@/_1.5pc/[lll]  }\]
\vspace{0.2cm}
\end{proposition}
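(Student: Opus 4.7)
My plan is to show that the recollement of Theorem \ref{recollement} restricts verbatim to $\D^-$ by verifying that each of the six functors $i_*, i^*, i^!, j_!, j^*, j_*$ sends bounded-above objects to bounded-above ones; once this is done, the recollement axioms on the $\D^-$ level follow immediately from those of the unrestricted recollement, and the commutativity of the inclusion $\iota$ with the recollements is automatic since the bottom functors are literally restrictions of the top ones. Throughout, I will rely on Lemma \ref{Char1} and the equivalences $\D(\Mod\FB)\simeq{\rm Loc}\,\FB$ and $\D(\Mod\FC)\simeq{\rm Loc}\,\FC$ of Remark \ref{rem1}, under which $i_*$ and $j_!$ are realized as the total-complex functors $\Phi_\FB$ and $\Phi_\FC$.

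The functors $i_*$ and $j_!$ preserve $\D^-$ because of the uniform width bound $({\rm P}3)$: the total-complex construction of a bounded-above complex whose terms are themselves complexes uniformly supported in $[-n,n]$ produces a bounded-above complex of $\CS$-modules. For $i^*$ and $j^*$, let $\X\in\D^-(\Mod\CS)$ and apply $\Hom(C[-m],-)$, with $C\in\FC$, to the $(\CU,\CV)$-truncation triangle $j_!j^*(\X)\to\X\to i_*i^*(\X)\rightsquigarrow$. The extension of $({\rm P}4)$ to all of ${\rm Loc}\,\FB$, established in the proof of Theorem \ref{recollement} using compactness of objects in $\FC$, forces $\Hom(C[-m], i_*i^*(\X))=0$ for every $m\in\Z$, so $\Hom(C[-m], j_!j^*(\X))\cong\Hom(C[-m], \X)$. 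Since $C$ is a bounded complex of finitely generated projectives (so K-projective and compact) and $\X$ is cohomologically bounded above, the latter Hom vanishes for all sufficiently large $m$. By Lemma \ref{Char1} this gives $j^*(\X)\in\D^-(\Mod\FC)$, hence $j_!j^*(\X)\in\D^-(\Mod\CS)$, hence by the triangle $i_*i^*(\X)\in\D^-(\Mod\CS)$; an analogous application of Lemma \ref{Char1} with $B\in\FB\subset\K^{\bb}(\Prj\CS)$ (which is K-projective) then gives $i^*(\X)\in\D^-(\Mod\FB)$.

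For the remaining two functors, $i^!$ is handled by the adjunction isomorphism $\Hom(B, i^!(\X)[m])\cong\Hom(i_*(B),\X[m])\cong\Hom(B,\X[m])$: since $B$ is K-projective and bounded and $\X\in\D^-$, this vanishes for large $m$, so Lemma \ref{Char1} yields $i^!(\X)\in\D^-(\Mod\FB)$. For $j_*$, apply the $(\CV,\CW)$-truncation to $j_!(Y)\in\CU$ to produce a triangle $i_*i^!(j_!(Y))\to j_!(Y)\to j_*(Y)\rightsquigarrow$; when $Y\in\D^-(\Mod\FC)$, the first two terms lie in $\D^-(\Mod\CS)$ by the previous steps, and hence so does $j_*(Y)$. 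The most delicate point is the vanishing $\Hom(C[-m],\X)=0$ for large $m$, which would fail if we only knew $\FC\subset\K^{\bb}(\Prj\CS)$ rather than $\K^{\bb}(\prj\CS)$ --- this is precisely where the asymmetric hypotheses on $\FB$ and $\FC$ in Theorem \ref{recollement} come into play.
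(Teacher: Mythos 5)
Your argument is correct and, at its core, is the same as the paper's: the essential content in both is that for $\X\in\D^-(\Mod\CS)$ the terms of the two canonical truncation triangles of Theorem~\ref{recollement} again lie in $\D^-$, verified through the vanishing of $\Hom(C[-m],-)$ and $\Hom(B[-m],-)$ for $m\gg 0$ combined with Lemma~\ref{Char1}. The packaging differs slightly: the paper shows the two stable $t$-structures of Theorem~\ref{recollement} intersect $\D^-(\Mod\CS)$ to stable $t$-structures and then invokes Proposition~\ref{Miyachi} together with Corollary~1.13 of [IKM2] for the morphism of recollements, whereas you verify directly that each of the six functors preserves $\D^-$ and observe that the morphism of recollements is then tautological. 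Your route is a bit more self-contained, avoiding the [IKM2] citation, but it is not a genuinely different proof.

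One small correction to your closing sentence: the vanishing $\Hom(C[-m],\X)=0$ for $m\gg 0$ holds for \emph{any} bounded complex of projectives $C$ (it only uses that $C$ is K-projective and concentrated in boundedly many degrees, together with $\X\in\D^-$), so this step would not fail if $\FC\subset\K^{\bb}(\Prj\CS)$. The finite-generation hypothesis $\FC\subset\K^{\bb}(\prj\CS)$ is used, as you in fact noted correctly earlier in your argument, only to extend $({\rm P}4)$ from $\FB$ to ${\rm Loc}\,\FB$: one needs $\Hom(C,-)$ to commute with the coproducts entering the construction of ${\rm Loc}\,\FB$, which is exactly compactness of $C$. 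The final remark thus misattributes where the asymmetry between $\FB$ and $\FC$ is actually needed, though this does not affect the validity of the proof.
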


\begin{proof}
 Consider the following subcategories of $\D^-(\Mod \CS)$
 \begin{itemize}
 \item $\CU= \im \Phi_{\FC} \bigcap \D^-(\Mod \CS)$,
  \item $\CV= \im \Phi_{\FB}\bigcap \D^-(\Mod \CS)$,
  \item $\CW= \im \Phi_{\FB}^\perp \bigcap \D^-(\Mod \CS)$,
 \end{itemize}
where, $\Phi_{\FB}: \K\K_{\mathfrak X}(\Mod \CS) \lrt \K(\Mod \CS)$ and $\Phi_{\FC}: \K\K_{\mathfrak Y}(\Mod \CS) \lrt \K(\Mod \CS)$ are introduced in the proof of above theorem.
First we prove that $(\CU, \CV)$ and $(\CV, \CW)$ are stable $t$-structures in $\D^-(\Mod \CS)$. In view of the proof of above theorem, $\Hom_{\D^-(\Mod \CS)}(\CU, \CV)=0 = \Hom_{\D^-(\Mod \CS)}(\CV,\CW)$.

Now, let $\X$ be a complex in $\D^-(\Mod \CS)$. By the proof of  Theorem \ref{recollement}, there is a triangle
$$ \X' \rt \X \rt \X'' \rightsquigarrow$$
in which $\X' \in \im \Phi_{\FC}$ and $\X'' \in \im \Phi_{\FB}$. In view of  condition $({\rm P}4)$ we may proved that $\Hom_{\D(\Mod \CS)}(C, \X''[i])=0$, for every $C \in \FC$ and all $i$.
On the other hand, since $\X \in \D^-(\Mod \CS)$, for each complex ${C} \in \FC$ there exists an integer \ $N$ such that $\ \Hom_{\D(\Mod \CS)}(C, \X[i])=0$, for  all \ $i\geq N$. Therefore, the above triangle implies  that $$\Hom_{\D(\Mod \CS)}(C, \X'[i])=0,$$ for every $C \in \FC$ and all $i >N$. Since $\X' \in {\rm Loc} \FC$, $\X'$ is isomorphic in $\K_{\FC}(\Mod \CS)$ to a bounded above complex. This implies that $\X'$ lies in $\D^-(\Mod \CS)$.  By the above triangle, $\X''$ lies in $\D^-(\Mod \CS)$, up to isomorphism, as well. Consequently, $(\CU, \CV)$ is a stable $t$-structure.

Moreover, as it is shown in the proof of Theorem \ref{recollement}, $(\im \Phi_{\FB}, \im \Phi_{\FB}^\perp)$ is a stable $t$-structure. So we have a triangle
$$\Y \rt \X \rt \BZ \rightsquigarrow,$$
in $\D(\Mod \CS)$ with $\Y \in \im \Phi_{\FB}$ and $\BZ \in \im \Phi_{\FB}^\perp$. Now, the same argument as above works to show that $\Y$, and hence $\BZ$, lies in $\D^-(\Mod \CS)$, up to isomorphism. So $(\CV, \CW)$ is a stable $t$-structure.

Now, Proposition \ref{Miyachi} implies the following recollement
\[\xymatrix@C=0.5cm{\CV \ar[rrr]^{i_*}  &&& \D^-(\Mod \CS) \ar[rrr]^{j^*} \ar@/^1.5pc/[lll]_{i^!} \ar@/_1.5pc/[lll]_{i^*} &&& \D^-(\Mod \CS)/\CV.\ar@/^1.5pc/[lll]_{j_*} \ar@/_1.5pc/[lll]_{j_!} }\]
such that $\im j_!=\CU$ and $\im j_*= \CW$.

We claim that $\CV \simeq \D^-(\Mod \FB)$ and $\CU \simeq \D^-(\Mod \FC)$. In fact, in view of the argument above, $\im \Phi_{\FC}\bigcap \D^-(\Mod \CS) =\K^-_{\FC}(\Mod \CS)$ and $\im \Phi_{\FB} \bigcap \D^-(\Mod \CS) = \K^-_{\FB}(\Mod \CS)$. Now, Proposition \ref{restrict} and Remark \ref{rem1} yield the desired equivalences.

Altogether, we get the following recollement
\vspace{0.35cm}
 \[\xymatrix@C=0.5cm{ \D^-(\Mod \FB) \ar[rrr]  &&& \D^-(\Mod \CS) \ar[rrr] \ar@/^1.5pc/[lll] \ar@/_1.5pc/[lll] &&& \D^-(\Mod \FC)\ar@/^1.5pc/[lll] \ar@/_1.5pc/[lll] }\]
 \vspace{0.2cm}

Furthermore, the inclusion functor $\iota: \D^-(\Mod \CS) \lrt \D(\Mod \CS)$, takes stable $t$-structures $(\CU, \CV)$ to $(\im \Phi_{\FC}, \im \Phi_{\FB})$ and $(\CV, \CW)$ to $(\im \Phi_{\FB}, \im \Phi_{\FB}^\perp)$. Now Corollary 1.13 of \cite{IKM2} implies the desired diagram.
\end{proof}

\begin{remark}
As it is mentioned in Remark \ref{rem}, our proofs also work to generalize the above results to a skeletally small category $\CS$.
\end{remark}

The authors of \cite{AKL3} compared recollements of different levels. They proved that $\D^{\bb}(\Mod)$ level recollements of rings  can be lifted to $\D^-(\Mod)$ and $\D(\Mod)$ levels recollements; see \cite[Sec. 4]{AKL3}. In the following corollary we prove this result using our previous corollaries. Moreover, we show that these recollements fit into a commutative diagram of recollements.

\begin{corollary}
Let $A$ be a ring admitting a $\D^{\bb}(\Mod )$ level recollement
\[\xymatrix@C=0.5cm{ \D^{\bb}(\Mod B) \ar[rrr]^{i_*}  &&& \D^{\bb}(\Mod A) \ar[rrr]^{j^*} \ar@/^1.5pc/[lll]_{i^!} \ar@/_1.5pc/[lll]_{i^*} &&& \D^{\bb}(\Mod C).\ar@/^1.5pc/[lll]_{j_*} \ar@/_1.5pc/[lll]_{j_!} }\]
Then there exist the following inclusion morphisms of recollements
\vspace{0.2cm}

\[\xymatrix@C=0.5cm@R=0.7cm{ \D(\Mod B) \ar[rrr] \ar@{<-_)}[dd]  &&& \D(\Mod A) \ar[rrr] \ar@{<-_)}[dd] \ar@/^1.5pc/[lll] \ar@/_1.5pc/[lll] &&& \D(\Mod C) \ar@/^1.5pc/[lll] \ar@/_1.5pc/[lll] \ar@{<-_)}[dd] \\ \\ \D^-(\Mod B)
\ar[rrr]  \ar@{<-_)}[dd] &&&  \D^-(\Mod A) \ar[rrr] \ar@{<-_)}[dd] \ar@/^1.5pc/[lll] \ar@/_1.5pc/[lll] &&&  \D^-(\Mod C) \ar@{<-_)}[dd] \ar@/^1.5pc/[lll] \ar@/_1.5pc/[lll] \\ \\ \D^{\bb}(\Mod B) \ar[rrr]   &&& \D^{\bb}(\Mod A) \ar[rrr]  \ar@/^1.5pc/[lll] \ar@/_1.5pc/[lll] &&& \D^{\bb}(\Mod C) \ar@/^1.5pc/[lll] \ar@/_1.5pc/[lll] }\]
\vspace{0.2cm}
\end{corollary}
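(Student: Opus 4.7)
The plan is to reduce everything to a single pair of ``tilting data'' $\FB, \FC$ and then apply Theorem~\ref{recollement} and Proposition~\ref{resRec} directly. First, from the given $\D^{\bb}(\Mod)$ level recollement and (the bounded variant of) Koenig's theorem, I would extract partial tilting complexes $\FB \in \K^{\bb}(\Prj A)$ with $\End_A(\FB) \cong B$ and $\FC \in \K^{\bb}(\prj A)$ with $\End_A(\FC) \cong C$ satisfying the four standard conditions $(i)$--$(iv)$ recalled just before Theorem~\ref{recollement}. These two complexes (viewed, as in the setup, as generating the corresponding small triangulated subcategories of $\K^{\bb}(\Prj A)$ and $\K^{\bb}(\prj A)$) automatically satisfy $({\rm P}1)$--$({\rm P}3)$, while $({\rm P}4)$ is precisely condition $(iii)$ and $({\rm P}5)$ is condition $(iv)$.

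Next I would invoke Theorem~\ref{recollement} with $\CS = A$, $\FB$ and $\FC$ as above; this produces the top row of the desired diagram,
$$\D(\Mod B) \simeq \D(\Mod \FB) \longrightarrow \D(\Mod A) \longrightarrow \D(\Mod \FC) \simeq \D(\Mod C),$$
using Remark~\ref{rem1} to identify $\D(\Mod \FB)$ and $\D(\Mod \FC)$ with $\D(\Mod B)$ and $\D(\Mod C)$. Then Proposition~\ref{resRec} gives the middle row together with the upper commutative prism; the inclusion $\iota : \D^-(\Mod A) \hookrightarrow \D(\Mod A)$ is a morphism of recollements by exactly the argument given there (it is the content of the last paragraph of Proposition~\ref{resRec}, combined with \cite[Cor.~1.13]{IKM2}).

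It remains to build the lower prism, i.e.\ to show that the inclusion $\D^{\bb}(\Mod A) \hookrightarrow \D^-(\Mod A)$ is a morphism of recollements. Following the same strategy as in Proposition~\ref{resRec}, I would set
$$\CU^{\bb} = \im \Phi_{\FC} \cap \D^{\bb}(\Mod A), \quad \CV^{\bb} = \im \Phi_{\FB} \cap \D^{\bb}(\Mod A), \quad \CW^{\bb} = (\im \Phi_{\FB})^\perp \cap \D^{\bb}(\Mod A),$$
and verify that $(\CU^{\bb}, \CV^{\bb})$ and $(\CV^{\bb}, \CW^{\bb})$ are stable $t$-structures in $\D^{\bb}(\Mod A)$. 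Given $\X \in \D^{\bb}(\Mod A)$, the triangles $\X' \to \X \to \X'' \rightsquigarrow$ and $\Y \to \X \to \BZ \rightsquigarrow$ already constructed in Proposition~\ref{resRec} land in $\D^-$; using that $\FB \subseteq \K^{\bb}(\Prj A)$ and $\FC \subseteq \K^{\bb}(\prj A)$ are bounded (and that the objects of $\FC$ are in particular compact), one then shows the pieces have bounded cohomology as well. Once $\CV^{\bb} \simeq \D^{\bb}(\Mod \FB)$ and $\CU^{\bb} \simeq \D^{\bb}(\Mod \FC)$ are identified via the bounded analogue of Remark~\ref{rem1}, Proposition~\ref{Miyachi} and \cite[Cor.~1.13]{IKM2} yield the recollement and the second inclusion morphism.

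The hard part is the last step: showing that the splitting triangles provided by the stable $t$-structure of Theorem~\ref{recollement} preserve boundedness in \emph{both} directions when restricted to $\D^{\bb}(\Mod A)$. The bounded-above direction is already handled in Proposition~\ref{resRec} by using compactness of objects in $\FC$ together with Lemma~\ref{Char1}. The bounded-below direction is the symmetric claim, and the natural tool is to dualise that argument: one applies $\Hom(-, \FB)$ and uses that $\FB \in \K^{\bb}(\Prj A)$ forces a uniform lower bound on nonvanishing degrees, so that the $\im \Phi_{\FB}$-component (and hence also its complement) of $\X$ stays homologically bounded. Once this symmetry is put in place, the remaining verifications are purely formal from the machinery already developed.
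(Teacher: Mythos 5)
The first three steps of your plan (extract $\FB = i_*(B)$, $\FC = j_!(C)$ via Koenig's characterization, invoke Theorem~\ref{recollement} for the top row, invoke Proposition~\ref{resRec} for the middle row and the upper prism) agree with the paper and are fine. The problem is the lower prism.

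Your approach there is genuinely different from the paper's and it does not go through. You try to manufacture a $\D^{\bb}$ level recollement from scratch by intersecting the $t$-structure pieces $\im\Phi_{\FB}$, $\im\Phi_{\FC}$, $(\im\Phi_{\FB})^{\perp}$ with $\D^{\bb}(\Mod A)$ and then verifying the stable $t$-structure axioms, the crucial point being that the splitting triangles preserve homological boundedness in both directions. You acknowledge this is the hard part, and your justification fails. The bounded-above direction in Proposition~\ref{resRec} relies on the objects of $\FC \subseteq \K^{\bb}(\prj A)$ being compact, so that $\Hom(C,-)$ commutes with coproducts and $\Hom(C,\X[i])$ vanishes for large $i$ as soon as $\X$ is bounded above. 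The ``symmetric'' claim you propose for boundedness below by applying $\Hom(-,\FB)$ has no such support: the objects of $\FB$ lie only in $\K^{\bb}(\Prj A)$, are not compact, and there is no corresponding characterisation of $\D^{\bb}$ inside $\D^-$ in terms of $\Hom(-,\FB)$ vanishing in high degrees. Put differently, the data $(\FB,\FC)$ satisfying (P1)--(P5) characterises the $\D^-$ level (resp.\ $\D$ level) recollement, not the $\D^{\bb}$ level one, so you cannot derive bounded preservation of the splitting triangles from these conditions alone; it is precisely the extra information carried by the \emph{given} $\D^{\bb}$ recollement that you are discarding. There is also a secondary issue: even if the intersections happened to form a recollement, you would still need to identify it with the \emph{given} bounded recollement, which is the one the corollary's bottom row must be.

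The paper's route is simpler and avoids both problems. It takes the stable $t$-structures $(\im j_!,\im i_*)$ and $(\im i_*,\im j_*)$ that Miyachi's result attaches to the \emph{given} bounded recollement, and checks that the inclusion $\D^{\bb}(\Mod A)\hookrightarrow\D^-(\Mod A)$ sends them into the stable $t$-structures of the $\D^-$ recollement. Concretely: for $\X=j_!(\Y)\in\im j_!$, decompose $\X$ via the $\D^-$ stable $t$-structure as $\X'\to\X\to\X''\rightsquigarrow$ with $\X''\in\im\Phi_{\FB}$; the recollement adjunctions give $\Hom(\X''[i],i_*(B))=0$ for all $i$, which together with $\X''\in\langle\FB\rangle\cap\D^-(\Mod A)$ forces $\X''=0$, so $\X\cong\X'$. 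Analogous one-line checks handle $\im i_*$ and $\im j_*$. Then \cite[Cor.~1.13]{IKM2} upgrades these containments to a morphism of recollements. No boundedness preservation of the splitting triangles is ever asserted. You should replace your lower-prism argument with this containment argument.
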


\begin{proof}
Denote by $\FB$, resp. $\FC$, the image of $B$, resp. $C$, under the functor $i_*$, resp. $j_!$. In view of Theorem 1 of \cite{Kon}, $\FB \in \K^{\bb}(\Prj A)$ and $\FC \in \K^{\bb}(\prj A)$ are complexes satisfying the following conditions
\begin{itemize}
\item[$(i)$] $\End(\FB) \cong B$;
\item[$(ii)$] $\End(\FC) \cong C$;
\item[$(iii)$] $\Hom_{\K(\Prj A)}(\FC, \FB[i])=0$, for all $i \in \Z$;
\item[$(iv)$] $\FB^{\perp} \bigcap \FC^\perp =\{0\}$.
\end{itemize}
Hence by the proof of Theorem \ref{recollement}, there are two stable $t$-structures $(\im \Phi_{\FC}, \im \Phi_{\FB})$ and $(\im \Phi_{\FB}, (\im \Phi_{\FB})^\perp)$  inducing a recollement
\vspace{0.3cm}
\[\xymatrix@C=0.5cm{ \D(\Mod B) \ar[rrr]  &&& \D(\Mod A) \ar[rrr] \ar@/^1.5pc/[lll] \ar@/_1.5pc/[lll] &&& \D(\Mod C).\ar@/^1.5pc/[lll] \ar@/_1.5pc/[lll] }\]
\vspace{0.2cm}

On the other hand, it follows from \cite{Mi} that assigned to the recollement
\[\xymatrix@C=0.5cm{ \D^{\bb}(\Mod B) \ar[rrr]^{i_*}  &&& \D^{\bb}(\Mod A) \ar[rrr]^{j^*} \ar@/^1.5pc/[lll]_{i^!} \ar@/_1.5pc/[lll]_{i^*} &&& \D^{\bb}(\Mod C).\ar@/^1.5pc/[lll]_{j_*} \ar@/_1.5pc/[lll]_{j_!} }\]
there are stable $t$-structures $(\im j_!, \im i_*)$ and $(\im i_*, \im j_*)$.

In addition, by Proposition \ref{resRec}, we have the following diagram of recollements
\vspace{0.2cm}

\[\xymatrix@C=0.5cm@R=0.7cm{ \D(\Mod \FB) \ar[rrr] \ar@{<-_)}[dd]  &&& \D(\Mod \CS) \ar[rrr] \ar@{<-_)}[dd] \ar@/^1.5pc/[lll] \ar@/_1.5pc/[lll] &&& \D(\Mod \FC) \ar@/^1.5pc/[lll] \ar@/_1.5pc/[lll] \ar@{<-_)}[dd] \\ \\ \D^-(\Mod \FB) \ar[rrr]  &&& \D^(\Mod \CS) \ar[rrr] \ar@/^1.5pc/[lll] \ar@/_1.5pc/[lll] &&& \D^-(\Mod \FC).\ar@/^1.5pc/[lll] \ar@/_1.5pc/[lll]  }\]
\vspace{0.3cm}
\\Note that by the proof of Proposition \ref{resRec}, the second recollement is obtained by stable $t$-structures $${\tiny (\im \Phi_{\FC} \bigcap \D^-(\Mod A), \im \Phi_{\FB} \bigcap \D^-(\Mod A)) \ }\text{and} \ {\tiny (\im \Phi_{\FB} \bigcap \D^-(\Mod A) , \im \Phi_{\FB}^\perp \bigcap \D^-(\Mod A))}.$$

Now, to complete the diagram, it is enough to prove that inclusion functor $i: \D^{\bb}(\Mod A) \lrt \D^-(\Mod A)$ sends stable $t$-structures $(\im j_!, \im i_*)$ to $(\im \Phi_{\FC} \bigcap \D^-(\Mod A), \im \Phi_{\FB} \bigcap \D^-(\Mod A))$ and $(\im i_*, \im j_*)$ to $(\im \Phi_{\FB} \bigcap \D^-(\Mod A) , \im \Phi_{\FB}^\perp \bigcap \D^-(\Mod A))$.

Let $\X$ belong to $\im j_!$. So, there is a complex $\Y \in \D^{\bb}(\Mod C)$ such that $\X = j_!(\Y)$. The stable $t$-structure $(\im \Phi_{\FC}, \im \Phi_{\FB})$ gives us a triangle
$$ \X' \rt \X \rt \X''\rightsquigarrow,$$
in $\D^-(\Mod A)$ in which $\X' \in \im \Phi_{\FC} \bigcap \D^-(\Mod A)$ and $\X'' \in \im \Phi_{\FB} \bigcap \D^-(\Mod A)$. One should apply the cohomological functor $ \Hom(-, i_*(B))$ to see that $\Hom(\X'' [i], i_*(B))=0$, for all $i \in \Z$.  Since $\X'' \in \lan \FB \ran \bigcap \D^-(\Mod A)$, this implies that  $\X''=0$. Thus, $\X$ is isomorphic to $\X'$.

Now, assume that $\X \in \im i_*$. Consider the stable $t$-structure $$ (\im \Phi_{\FB} \bigcap \D^-(\Mod A) , \im \Phi_{\FB}^\perp \bigcap \D^-(\Mod A)).$$ One should use a similar argument as above and condition $(iv)$ to show that $\X$ is isomorphic to a complex in $\im \Phi_{\FB} \bigcap \D^-(\Mod A)$.

Finally, the adjoint pair $(i_*, i^!)$ yields that $\Hom(i_*(B), \im j_*[i])=0$ for every $i \in \Z$. Thus, since $\im \Phi_{\FB} = {\rm Loc} (i_*(B))$, a standard argument implies that $\Hom({\bf B}, \im j_*[i])=0$, for each ${\bf B} \in \im \Phi_{\FB}$ and for all $i \in \Z$. Therefore, $\im j_*$ is contained in $\im \Phi_{\FB}^\perp \bigcap \D^-(\Mod A)$.

Now, Corollary 1.13 of \cite{IKM2} completes the proof.
\end{proof}

\subsection{Recollements of path rings}
Asashiba \cite{As} proved that if $A$ and $B$ are algebras that are derived equivalent, then their path algebras, incidence algebras and monomial algebras are derived equivalent. Motivated by this result, we show that if we have a $\D^-(\Mod)$ level recollement of rings, then there are both $\D^-(\Mod )$ and also $\D(\Mod)$ level recollements of their path rings, incidence rings and monomial rings over any locally finite quiver. This result should be compared with Theorem 4.6 of \cite{AHV2}, that we proved similar result but in $\D^-(\Mod )$ level and for finite acyclic quivers.  We need to recall some preliminaries.

\sss A quiver $\CQ$ is in fact a directed graph. The set of vertices and arrows of $\CQ$ are denoted by   $\CQ_0$ and  $\CQ_1$, respectively.
A quiver $\CQ$ is said to be locally finite, if the set of paths between every two vertices
is finite.

A relation  is an $A$-linear combination $\rho = \Sigma_{i=0}^m a_i \gamma_i,$ where $a_i \in A$ and $\gamma_i$ are paths of $\CQ$ of length at least 2 having the same source and target. If $m=1$, the relation $\rho$  is called a monomial relation. Also, $\rho$ is called a commutativity relation, provided that it is of the form $\gamma_1 -\gamma_2$.  Let ${I_c}$  denote the set of  all commutativity relations of $\CQ$ and ${I_m}$ denotes the set of monomial relations of $\CQ$. By convention, $I_0$ means no relations.

Let $A$ be a ring. The category of all representations of $\CQ$ over $\Mod A$ will be denoted by $\Rep(\CQ, A)$.
Let $I$ be a set of relations of a quiver $\CQ$. We denote by $\Rep(\CQ_I,A)$  the full subcategory of $\Rep(\CQ,A)$ consisting of all representations $\CM$ such that $\CM_{\rho}=0$, for any relation $\rho \in I$.

\sss {\sc Evaluation functor and its adjoint.}\label{EvaAdj}
Let $\CQ$ be a quiver and $I_*$ be a set of relations, with $* \in \{0, c, m\}$. Assigned to any vertex $v \in \CQ_0$, there is an evaluation functor $e^v: \Rep(\CQ_{I_*}, A) \lrt \Mod A$, mapping any representation $\CM$ of $\CQ$ to its module at vertex $v$, denoted by $\CM_v$. It is known that in the above cases, $e^v$ has both a left and a right adjoint, denoted by $e^v_{\la}$ and $e^v_{\rho}$, respectively. For details on the construction of these adjoints, see \cite{EH} for $*=0$, \cite{Mit} for $*=c$ and \cite{Es} for $*=m$. The evaluation functor $e^v: \Rep(\CQ_{I_*}, A) \lrt \Mod A$ and its adjoints can be extended naturally to the homotopy category level
$k^v: \K(\Rep(\CQ_{I_*}, A) ) \lrt \K(\Mod A)$ with adjoints $k^{v}_{\la}$ and $k^v_{\rho}$. For details see \cite{AEHS}.

Let $A$ be a ring and $\CQ$ be a quiver. The path ring $A\CQ$ is defined to be a free $A$-module with basis all paths of $\CQ$.

Now, assume that $I$ is a set of relations of a quiver  $\CQ$. Let $\CS_{\CQ_I}^A$ denote the category, whose objects are vertices of $\CQ$ and for any $v, w \in \CQ_0$, $\Hom_{\CS_{\CQ_I}^A}(v,w) = \oplus_{\CQ(v,w)}A + \CI/ \CI$, where $\CI$ is an ideal of the path ring $A\CQ$, generated by all relations in $I$. It can be easily checked that the category $\Rep(\CQ_I,A)$ is equivalent to the functor category $(\CS_{\CQ_I}^A, \CA b)$, or equivalently, to $\FMod ((\CS_{\CQ_I}^A)^{\op})$.
Observe that under this equivalence every object $\Hom_{\CS_{\CQ_I}^A}(v,-)$ in $(\CS_{\CQ_I^A}, \CA b)$ is assigned to an object $e_{\la}^v(A)$ in $\Rep(\CQ_I,A)$.

With this consideration, we have the following result as a consequence of the above theorem.

\begin{stheorem}\label{Ext}
Let $\CQ$ be a locally finite quiver with relation $I_*$, where $* \in \{ 0, c, m \}$.  If a ring  $A$ has a $\D^-(\Mod)$ level recollement as follows
\[\xymatrix@C=0.5cm{ \D^-(\Mod B) \ar[rrr]^{i_*}  &&& \D^-(\Mod A) \ar[rrr]^{j^*} \ar@/^1.5pc/[lll]_{i^!} \ar@/_1.5pc/[lll]_{i^*} &&& \D^-(\Mod C),\ar@/^1.5pc/[lll]_{j_*} \ar@/_1.5pc/[lll]_{j_!} }\]
then there exists the following recollement of path rings
\vspace{0.3cm}
\[\xymatrix@C=0.5cm{ \D(\Rep(\CQ_{I_*},B)) \ar[rrr]  &&& \D(\Rep(\CQ_{I_*},A)) \ar[rrr] \ar@/^1.5pc/[lll] \ar@/_1.5pc/[lll] &&& \D(\Rep(\CQ_{I_*},C)).\ar@/^1.5pc/[lll] \ar@/_1.5pc/[lll] }\]
\vspace{0.2cm}
\end{stheorem}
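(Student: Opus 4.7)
The strategy is to realize the representation category as a functor category and then apply Theorem \ref{recollement}. Set $\CS := (\CS_{\CQ_{I_*}}^A)^{\op}$, so that $\Rep(\CQ_{I_*},A) \simeq \Mod \CS$. By Koenig's theorem \cite{Kon} (recalled just before Theorem \ref{recollement}), the given $\D^-(\Mod)$ level recollement of $A, B, C$ supplies partial tilting complexes $\X_B \in \K^{\bb}(\Prj A)$ and $\X_C \in \K^{\bb}(\prj A)$ with $\End(\X_B) \cong B$, $\End(\X_C) \cong C$, $\Hom_{\D^-(\Mod A)}(\X_C, \X_B[i])=0$ for all $i \in \Z$, and $\X_B^{\perp} \cap \X_C^{\perp} = 0$. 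The goal is to produce analogous data inside $\K^{\bb}(\Prj \CS)$ and $\K^{\bb}(\prj \CS)$ and then invoke Theorem \ref{recollement}.

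The lifting is carried out vertex by vertex. For each $v \in \CQ_0$, the evaluation functor $e^v : \Rep(\CQ_{I_*},A) \lrt \Mod A$ of \ref{EvaAdj} has a left adjoint $e^v_{\la}$ which extends to the homotopy category; since $\CQ$ is locally finite, $e^v_{\la}$ sends $\Prj A$ into $\Prj \CS$ and $\prj A$ into $\prj \CS$ in each of the three cases $* \in \{0,c,m\}$, and it preserves boundedness. Define $\FB$ to be the smallest full triangulated subcategory of $\K^{\bb}(\Prj \CS)$ closed under coproducts containing $\{ e^v_{\la}(\X_B) \mid v \in \CQ_0 \}$, and $\FC$ analogously inside $\K^{\bb}(\prj \CS)$ using $e^v_{\la}(\X_C)$. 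The crucial identity is the adjunction
\[
\Hom(e^v_{\la}(\Y), e^w_{\la}(\Y')[i]) \cong \Hom(\Y, e^w e^v_{\la}(\Y')[i]),
\]
together with the description of $e^w e^v_{\la}(M)$ as a direct sum, indexed by the paths from $v$ to $w$ in $\CQ$ taken modulo $I_*$, of copies of $M$.

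Using this identity, properties $({\rm P}1)$ and $({\rm P}3)$ of \ref{Properties} follow from the sum-preservation and boundedness of $e^v_{\la}$; $({\rm P}2)$ and $({\rm P}4)$ of Theorem \ref{recollement} reduce via adjunction to the self-orthogonality of $\X_B$ and $\X_C$ and to the cross-vanishing $\Hom(\X_C, \X_B[i])=0$; and $({\rm P}5)$ holds because if $M \in \D(\Mod \CS)$ is right-orthogonal to both $\FB$ and $\FC$, then by adjunction each $M_v$ lies in $\X_B^{\perp} \cap \X_C^{\perp}$, forcing $M_v = 0$ for all $v$ and hence $M = 0$. Theorem \ref{recollement} then delivers a recollement
\[
\D(\Mod \FB) \lrt \D(\Mod \CS) \lrt \D(\Mod \FC),
\]
and the same adjunction formula computes the Hom-spaces in $\FB$ as $\bigoplus_{\CQ_{I_*}(v,w)} B$, identifying $\Mod \FB$ with $\Rep(\CQ_{I_*},B)$; the analogous statement for $\FC$ and $C$ completes the identification of the outer terms and yields the asserted recollement.

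The main obstacle will be uniformly handling the three cases $* \in \{0,c,m\}$: the explicit form of $e^v_{\la}$, and in particular the key identification $e^w e^v_{\la}(M) \cong \bigoplus_{\CQ_{I_*}(v,w)} M$ together with the preservation of (finitely generated) projectivity, must be verified using the constructions of \cite{EH, Mit, Es} and will be most delicate for the commutativity and monomial cases. Once this vertex-level input is in hand, everything else is a formal consequence of the adjunction and of Theorem \ref{recollement}.
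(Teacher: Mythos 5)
Your argument follows the paper's proof essentially step for step: take the Koenig partial tilting complexes $\X_B \in \K^{\bb}(\Prj A)$, $\X_C \in \K^{\bb}(\prj A)$ coming from the given recollement, push them forward to $\K^{\bb}(\Prj \CS)$, resp. $\K^{\bb}(\prj \CS)$, via the left adjoint of evaluation at each vertex, verify $({\rm P}1)$--$({\rm P}5)$ through the adjunction $(k^v_{\la}, k^v)$ together with the computation $k^w k^v_{\la}(M) \cong \bigoplus_{\CQ_{I_*}(v,w)} M$, apply Theorem \ref{recollement}, and then identify $\Mod \FB$ with $\Rep(\CQ_{I_*},B)$ (and likewise for $C$) by the same Hom-computation. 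This is exactly what the paper does, so the approach is the same.

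One slip worth fixing: you define $\FB$ as ``the smallest full triangulated subcategory of $\K^{\bb}(\Prj \CS)$ closed under coproducts containing $\{ e^v_{\la}(\X_B) \}$.'' That closure would be far too large; $\Mod \FB$ would then not be $\Rep(\CQ_{I_*},B)$. What is wanted (and what the paper uses, writing $\FB = \{\FB_v \mid v \in \CQ_0\}$) is simply the small full subcategory of $\K^{\bb}(\Prj \CS)$ whose objects are the $k^v_{\la}(\X_B)$, so that objects of $\FB$ are in bijection with vertices of $\CQ$ and $\Hom_{\FB}(v,w) \cong \bigoplus_{\CQ_{I_*}(w,v)} B$. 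Your later identification of $\Mod \FB$ with $\Rep(\CQ_{I_*},B)$ makes clear this is what you intend, but the definition as stated contradicts it.
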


\begin{proof}
For every $v \in \CQ_0$, set $\FB_v= k_{\la}^v(i_*(B))$ and $\FC_v= k_{\la}^v(j_!(C))$. Now, let $\FB= \{ \FB_v \mid v \in \CQ_0\}$ and $\FC =\{ \FC_v \mid v\in \CQ_0\}$.  Using Theorem 1 of \cite{Kon} and the adjoint pair $(k_{\la}^v, k^v)$, one can easily check that $\FB$ and $\FC$ satisfy  Conditions $(P1)$, $(P2)$, $(P3)$, $(P4)$ and $(P5)$. So Theorem \ref{recollement} implies the following recollement
\vspace{0.3cm}
\[\xymatrix@C=0.5cm{\D(\Mod \FB) \ar[rrr]  &&& \D(\Mod \CS) \ar[rrr] \ar@/^1.5pc/[lll] \ar@/_1.5pc/[lll] &&& \D(\Mod \FC).\ar@/^1.5pc/[lll] \ar@/_1.5pc/[lll] }\]
\vspace{0.2cm}

On the other hand, there is an equivalence $(\CS_{\CQ_{I_*}}^B )^{\op}  \st{\sim}\lrt \FB $, that assigns every vertex $v$ to $k_{\la}^v(i_*(B))$. Indeed,
let $v$ and $w$ be vertices of $\CQ$. Then $$\begin{array}{lll}
\Hom_{\K^{\bb}(\Prj A\CQ/\SI)}(k_{\la}^v(i_*(B), k_{\la}^w(i_*(B)))) & \cong \Hom_{\K^{\bb}(\Prj A)}(i_*(B), \oplus_{\CQ_{I_*}(w,v)}i_*(B))\\
& \cong \oplus_{\CQ_{I_*}(w,v)}\End(i_*(B))\\
& \cong \oplus_{\CQ_{I_*}(w,v)} B. \end{array}$$
Hence the category $\FMod(\FB)$ is equivalent  to $\FMod((\CS_{\CQ_{I_*}}^B)^{\op})$ and so to $\Rep(\CQ_{I_*},B)$.  Similarly,  there is an equivalence between $\FMod (\FC)$ and $\Rep(\CQ_{I_*},C)$.
\end{proof}

\subsection{Recollements of Graded rings}
Methods that are used in this section can be applied to get recollements of derived categories of graded modules over graded rings. Recall that a ring $A$ with identity is called graded if there is a direct sum decomposition $A=\oplus_{i\in G}A_i$ (as additive subgroups) such that $A_iA_j\subseteq A_{ij}$, for all $i,j \in G$. A left graded $A$-module is a left $A$-module $M$ together with an internal direct sum decomposition $M=\oplus_{i\in G}M_i$, where $M_i$ is a subgroup of the additive group $M$ in which $A_iM_j\subseteq M_{ij}$ for all $i,j \in G$. We denote  the category of all graded left $A$-modules by $\gr A$. Moreover, let ${\rm gr}^{\geq 0} \mbox{-} A$, resp. ${\rm gr}^{<0} \mbox{-} A$ denote the category of positively, resp. negatively, graded $A$-modules, that is $M_i =0$ for $i<0$, resp. $i\geq 0$.

Let $A_{-\infty}^{+\infty}$,  $A_{-\infty}$ and $A^{+\infty}$ denote quivers
\[\begin{array}{lll}
\cdots \lrt -2 \lrt -1 \lrt 0 \lrt 1 \lrt 2 \lrt \cdots, \\  \cdots \lrt -3 \lrt -2 \lrt -1, \\ \text{and} \ \ \ \ 0 \lrt 1 \lrt 2 \lrt \cdots,
\end{array}\]
respectively.

Let $\CA$ be an abelian category. $\C^{\geq 0}(\CA)$, resp. $\C^{< 0}(\CA)$, denotes  the full subcategory of $\C(\CA)$ consisting of complexes $\X$ with $X^i=0$ for all $i <0$, resp. $i\geq 0$.

\begin{stheorem}\label{A-+}
For a ring  $B$, the following statements hold true.
\begin{itemize}
\item [$(i)$] The path ring $B A_{- \infty}^{+\infty}$ admits a $\D^-(\Mod)$ level recollement of the form
    \vspace{0.3cm}
\[\xymatrix@C=0.5cm{ \D^-(\Mod B A_{-\infty}) \ar[rrr]  &&& \D^-(\Mod BA_{-\infty}^{+\infty}) \ar[rrr] \ar@/^1.5pc/[lll] \ar@/_1.5pc/[lll] &&& \D^-(\Mod B A^{+\infty}).\ar@/^1.5pc/[lll] \ar@/_1.5pc/[lll] }\]
\vspace{0.3cm}
\item [$(ii)$] The category $\C(\Mod B)$ admits a $\D^-(\Mod)$ level recollement of the form
    \vspace{0.3cm}
\[\xymatrix@C=0.5cm{ \D^-(\C^{< 0}(\Mod B)) \ar[rrr]  &&& \D^-(\C(\Mod B)) \ar[rrr] \ar@/^1.5pc/[lll] \ar@/_1.5pc/[lll] &&& \D^-( \C^{\geq 0}(\Mod B)).\ar@/^1.5pc/[lll] \ar@/_1.5pc/[lll] }\]
\vspace{0.2cm}
\end{itemize}
\end{stheorem}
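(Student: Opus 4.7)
The plan is to derive both parts from Proposition \ref{resRec} applied to appropriate small categories $\CS$, using the same choice of subcategories $\FB$ and $\FC$ — representable projectives indexed, respectively, by the negative and non-negative halves of $\Z$. For part (i), take $\CS = \CS_{A_{-\infty}^{+\infty}}^B$, so that $\Mod \CS \simeq \Rep(A_{-\infty}^{+\infty}, B) \simeq \Mod BA_{-\infty}^{+\infty}$. Write $P_v = \Hom_\CS(-, v)$ for the representable projective at the vertex $v \in \Z$ and set
\[
\FB = \{ P_v : v < 0 \}, \qquad \FC = \{ P_v : v \geq 0 \},
\]
both viewed as sets of stalk complexes inside $\K^{\bb}(\prj \CS)$.

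Conditions $(P1)$, $(P2)$, $(P3)$ are immediate because each $P_v$ is a finitely generated projective concentrated in degree zero. By the Yoneda lemma, $\Hom_{\Mod \CS}(P_v, P_w) = \Hom_\CS(v, w)$ is the free $B$-module on paths from $v$ to $w$ in $A_{-\infty}^{+\infty}$; since every arrow increases the index, no such path exists when $v \geq 0$ and $w < 0$, and higher $\Ext$'s vanish by projectivity — this is $(P4)$. For $(P5)$, an object $M \in \FB^\perp \cap \FC^\perp$ satisfies $\Hom_{\D(\Mod \CS)}(P_v, M[i]) = H^i(M_v) = 0$ for every vertex $v$ and every $i$, and since homology in $\Mod \CS$ is computed vertex-wise, $M$ is acyclic and hence zero in $\D(\Mod \CS)$. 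Finally, every path between two vertices of the same half of $\Z$ stays inside that half, so $\FB$ is equivalent as a category to $\CS_{A_{-\infty}}^B$ and $\FC$ to $\CS_{A^{+\infty}}^B$, giving $\Mod \FB \simeq \Mod BA_{-\infty}$ and $\Mod \FC \simeq \Mod BA^{+\infty}$. Proposition \ref{resRec} now delivers the desired recollement.

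Part (ii) proceeds along the same lines, but now with $\CS = \CS_{(A_{-\infty}^{+\infty})_{I_m}}^B$, where $I_m = \{\alpha_{v+1}\alpha_v : v \in \Z\}$ is the set of two-step composition relations, so that $\Mod \CS \simeq \C(\Mod B)$. The sets $\FB$ and $\FC$ are defined exactly as before; the only difference is that now $\Hom_\CS(v, w) = B$ precisely when $w \in \{v, v+1\}$ and vanishes otherwise. Since both $w = v$ and $w = v+1$ are impossible when $v \geq 0$ and $w < 0$, $(P4)$ still holds, and the remaining verifications proceed as in part (i). The identifications become $\Mod \FB \simeq \C^{<0}(\Mod B)$ and $\Mod \FC \simeq \C^{\geq 0}(\Mod B)$, and Proposition \ref{resRec} again yields the claimed recollement.

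The subtlest step I expect is the $(P5)$ argument in part (ii), where $\D^-(\Mod \CS) = \D^-(\C(\Mod B))$ is the derived category of a category of chain complexes, so its objects carry two differentials — the \emph{internal} one from the complex structure on each $\Mod \CS$-object, and the \emph{external} one from the derived category. The argument nevertheless works because kernels and cokernels in the functor category $\Mod \CS \simeq \C(\Mod B)$ are formed degreewise along the quiver, so vanishing of the vertex-wise cohomology groups $H^i(M_v)$ really does imply that $M$ is acyclic in $\Mod \CS$ and therefore zero in $\D(\Mod \CS)$.
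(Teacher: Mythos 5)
Your proof is correct and takes essentially the same route as the paper: realize each module category as $\Mod \CS$ for the appropriate path (or monomial) category, take $\FB$ and $\FC$ to be the representable projectives indexed by the negative and non-negative halves of $\Z$, verify $({\rm P}1)$--$({\rm P}5)$ via Yoneda and the one-way structure of the quiver, and invoke the recollement machinery; the paper phrases this by choosing $e^v_\lambda(B)$ and appealing to the argument of Theorem~\ref{Ext}, which is the same data under the equivalence $e^v_\lambda(B)\leftrightarrow \Hom_\CS(-,v)$. You are in fact slightly more careful than the paper in explicitly citing Proposition~\ref{resRec} to descend to the stated $\D^-$ level (the paper cites only Theorem~\ref{recollement}, which gives the unbounded version), and your closing remark about the two differentials in part~(ii) addresses a genuine point the paper leaves implicit.
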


\begin{proof}
$(i)$ Consider $A_{-\infty}$ and $A^{+\infty}$ as subquivers of $A_{-\infty}^{+\infty}$. For every $v \in (A_{-\infty})_0$ and $w \in (A^{+\infty})_0$, set $\FB_v = e_{\la}^v(B)$ and $\FC_w= e_{\la}^w(B)$ in $\Mod B A_{-\infty}^{+\infty}$. The same argument as in the proof of Theorem \ref{Ext}, implies that $\FB_v$ and $\FC_w$ satisfy all conditions $(P1)$, $(P2)$, $(P3)$, $(P4)$ and $(P5)$. So by Theorem \ref{recollement}, we have   gets the desired  recollement
\vspace{0.3cm}
\[\xymatrix@C=0.5cm{ \D^-(\Mod B A_{-\infty}) \ar[rrr]  &&& \D^-(\Mod BA_{-\infty}^{+\infty}) \ar[rrr] \ar@/^1.5pc/[lll] \ar@/_1.5pc/[lll] &&& \D^-(\Mod B A^{+\infty}).\ar@/^1.5pc/[lll] \ar@/_1.5pc/[lll] }\]
\vspace{0.2cm}

$(ii)$ It can be easily seen that the category of complexes $\C(\Mod B)$ coincides with the category of all representations of the quiver
\[\xymatrix{ \cdots \ar[r] & i-1 \ar[r]^{\al^{i-1}} & i \ar[r]^{\al^i} & i+1 \ar[r]^{\al^{i+1}} &\cdots }\]
bound by the zero relations $\al^{i+1}\al^i=0$, for all $ i \in \Z$. Let, for every $i \in \Z^+$, resp. $j \in \Z^-$, $\FB_i$, resp. $\FC_j$, be the complex
\[ \cdots \lrt 0 \lrt B \lrt B \lrt 0 \lrt \cdots,\]
with $B$ on the left hand side sits on the $i$-th, resp. $j$-th, term.
It can be easily seen, using a simple modification of the proof of Theorem \ref{Ext}, that $\FB_i$ and $\FC_j$ satisfy all the required conditions to apply  Theorem \ref{recollement}. Hence there is the desired recollement.
\end{proof}

Observe that for a ring $B$, $\Mod B A_{-\infty}^{+\infty}$ is equivalent to $\gr B[x]$, when $B[x]$ is considered as a $\Z$-graded ring, with a copy of $B$, generated by $1$, in degree $0$ and a copy of $B$, generated by $x^n$, in degree $n$, for every $n \in \N$. Also, it is known that the category of complexes over $B$ is equivalent to $\gr B[x]/(x^2)$, where  $B[x]/(x^2)$ is viewed as a $\Z$-graded ring with a copy of $B$, generated by $1$, in degree $0$ and a copy of $B$, generated by $x$, in degree $1$ and zero elsewhere; see also \cite{GH}.

Therefore, in view of Theorem \ref{A-+}, we have the following recollements of graded modules over graded rings
\vspace{0.3cm}
\[\xymatrix@C=0.5cm{ \D^-({\rm gr}^{< 0} \mbox{-} B[x]) \ar[rrr]  &&& \D^-(\gr B[x]) \ar[rrr] \ar@/^1.5pc/[lll] \ar@/_1.5pc/[lll] &&& \D^-(  {\rm gr}^{\geq 0} \mbox{-} B[x])),\ar@/^1.5pc/[lll] \ar@/_1.5pc/[lll] }\]
\vspace{0.2cm}

\[\xymatrix@C=0.5cm{ \D^-({\rm gr}^{< 0} \mbox{-}B[x]/(x^2)) \ar[rrr]  &&& \D^-(\gr B[x]/(x^2)) \ar[rrr] \ar@/^1.5pc/[lll] \ar@/_1.5pc/[lll] &&& \D^-(  {\rm gr}^{\geq 0} \mbox{-} B[x]/(x^2))).\ar@/^1.5pc/[lll] \ar@/_1.5pc/[lll] }\]
\vspace{0.2cm}

The notion of $N$-complexes are introduced and studied in \cite{Es}. A similar argument as above can be applied to see that there exists an equivalence between the category of $N$-complexes and $\gr B[x]/(x^n)$. So as above, we can show that for every positive integer $n$, $\D^-(\gr \frac{B[x]}{(x^n)})$ admits a recollement
\vspace{0.3cm}
\[\xymatrix@C=0.5cm{ \D^-({\rm gr}^{< 0} \mbox{-}B[x]/(x^n)) \ar[rrr]  &&& \D^-(\gr B[x]/(x^n)) \ar[rrr] \ar@/^1.5pc/[lll] \ar@/_1.5pc/[lll] &&& \D^-(  {\rm gr}^{\geq 0} \mbox{-} B[x]/(x^n)))\ar@/^1.5pc/[lll] \ar@/_1.5pc/[lll] }\]
\vspace{0.3cm}
\\relative to $\D^-({\rm gr}^{< 0} \mbox{-}B[x]/(x^n))$ and $\D^-(  {\rm gr}^{\geq 0} \mbox{-} B[x]/(x^n)))$.

\end{document}